\documentclass[a4paper,11pt,reqno]{amsart}
\usepackage{latexsym,amscd,amssymb,url}
\usepackage{comment}
\usepackage{extarrows} 
\usepackage{mathtools}
\usepackage{tikz-cd}

\pagestyle{headings}

\textwidth=450pt 
\oddsidemargin=12pt
\evensidemargin=12pt

\setlength{\footskip}{25pt} 
\usepackage[all,cmtip]{xy}  
\usepackage{graphicx} 
\usepackage{xcolor} 
\usepackage{enumitem} 
\definecolor{dblue}{rgb}{0,0,.6}
\usepackage[colorlinks=true, linkcolor=dblue, citecolor=dblue, filecolor = dblue, menucolor = dblue, urlcolor = dblue]{hyperref}

%\documentclass[a4paper,11pt,reqno]{amsart}
%\usepackage{latexsym,amscd,amssymb,url}
%\usepackage{comment}
%\usepackage{extarrows} 
%\usepackage{mathtools}
%
%\usepackage{appendix}
%\usepackage{authblk}
%\usepackage[utf8]{inputenc}  
%\usepackage{relsize}
%\usepackage{sseq}
%\usepackage{tikz}
%\usepackage{tikz-cd}
%\usepackage{graphicx}
%\usepackage{commath}

%\pagestyle{headings}

%\textwidth=450pt 
%\oddsidemargin=12pt
%\evensidemargin=12pt
%\renewcommand{\baselinestretch}{1.2}
%\setlength{\footskip}{25pt} 
%\usepackage[all,cmtip]{xy}  
%\usepackage{graphicx} 
%\usepackage{xcolor} 
%\usepackage{enumitem} 
%\definecolor{dblue}{rgb}{0,0,.6}
%\usepackage[colorlinks=true, linkcolor=dblue, citecolor=dblue, filecolor = dblue, menucolor = dblue, urlcolor = dblue]{hyperref}

\interfootnotelinepenalty=10000

\numberwithin{equation}{section}

%----------------------------------------------
% Theorem like environments
%
\newtheorem{theorem}{Theorem}[section]

\theoremstyle{plain}

\newtheorem{corollary}[theorem]{Corollary}

\newtheorem{lemma}[theorem]{Lemma}

\newtheorem{proposition}[theorem]{Proposition}

\newtheorem{remark}[theorem]{Remark}

\setcounter{tocdepth}{1}

%---------------------------------------------------
%newcommads

%\newcommand{\del}{\partial}
\newcommand{\Z}{\mathbb Z}
\newcommand{\Q}{\mathbb Q}
\newcommand{\A}{\mathbb A}

\newcommand{\C}{\mathbb C}

\newcommand{\R}{\mathbb R}

\newcommand{\CP}{\mathbb P}

\newcommand{\F}{\mathbb F}

\newcommand{\im}{\operatorname{im}}

\newcommand{\Pic}{\operatorname{Pic}}

\newcommand{\Aut}{\operatorname{Aut}}

\newcommand{\id}{\operatorname{id}}

\newcommand{\Spec}{\operatorname{Spec}}

\newcommand{\Gal}{\operatorname{Gal}}

\newcommand{\NS}{\operatorname{NS}}

\newcommand{\CH}{\operatorname{CH}}

\newcommand{\cl}{\operatorname{cl}}

\newcommand{\proet}{\text{pro\'et}}

\newcommand{\reg}{\operatorname{reg}}

\newcommand{\Frob}{\operatorname{Frob}}

\newcommand{\del}{\partial}

\newcommand{\dashedlongrightarrow}{\xymatrix@1@=15pt{\ar@{-->}[r]&}}
\renewcommand{\longrightarrow}{\xymatrix@1@=15pt{\ar[r]&}}
\renewcommand{\mapsto}{\xymatrix@1@=15pt{\ar@{|->}[r]&}}
\renewcommand{\twoheadrightarrow}{\xymatrix@1@=15pt{\ar@{->>}[r]&}}
\newcommand{\hooklongrightarrow}{\xymatrix@1@=15pt{\ar@{^(->}[r]&}}
\newcommand{\congpf}{\xymatrix@1@=15pt{\ar[r]^-\sim&}}
\renewcommand{\cong}{\simeq}

%--------------------------------------------------------
%command from Samet's notes

\begin{document}    
%\title[On an elementary reformulation of Tate's conjecture for divisors]{On an elementary reformulation of Tate's conjecture for divisors} 

%\title[On Milnor K-theory  of $\CP^3$ and Tate's conjecture for divisors]{On Milnor K-theory of $\CP^3$ and Tate's conjecture for divisors}

\title[Remarks on Milnor K-theory and Tate's conjecture for divisors]{Remarks on Milnor K-theory and Tate's conjecture for divisors}
%\title[On completed Milnor K-theory and Tate's conjecture for divisors]{On completed Milnor K-theory and Tate's conjecture for divisors}

%\title[On Milnor K-theory and Tate's conjecture for divisors]{On Milnor K-theory and Tate's conjecture for divisors}

%\author{Samet Balkan} 
%\address{Institute of Algebraic Geometry, Leibniz University Hannover, Welfengarten 1, 30167 Hannover, Germany.}
%\email{balkan@math.uni-hannover.de}

\author{Stefan Schreieder} 
\address{Institute of Algebraic Geometry, Leibniz University Hannover, Welfengarten 1, 30167 Hannover, Germany.}
\email{schreieder@math.uni-hannover.de}

%\date{\today}
\date{November 23, 2024} %; \copyright{\ Stefan Schreieder 2020}}
 \subjclass[2010]{primary 14C15, 14C25; secondary 14C35} %; secondary 14J45} 
%
% 14J70 Hypersurfaces
% 14J45 Fano varieties
% 14J10 Families, moduli, classification: algebraic theory
% 	14J35   	$4$-folds
% 14M20   	Rational and unirational varieties
% 	14M22   	Rationally connected varieties
% 14D06   	Fibrations, degenerations
%  	14E08   	Rationality questions
%  14C15   	(Equivariant) Chow groups and rings; motives
% 	14C25   	Algebraic cycles
%  	14M10   	Complete intersections
% 	14C30   	Transcendental methods, Hodge theory [See also 14D07, 32G20, 32J25, 32S35], Hodge conjecture
%	14F20   	Étale and other Grothendieck topologies and (co)homologies
%  		14C35   	Applications of methods of algebraic $K$-theory in algebraic geometry

%19D45: Higher symbols, Milnor $K$-theory

%\keywords{Norm varieties,  retract rational, Bloch--Kato conjecture.} 

\begin{abstract}   
 We show that the Tate conjecture for divisors over a finite field $\F$ is equivalent to an explicit algebraic problem about the third Milnor K-group of the function field $\bar \F(x,y,z)$ in three variables over $\bar \F$. 
%We show that the Tate conjecture for divisors over a finite field $\F$ with $q$ elements is equivalent to the property that the arithmetic Frobenius action on $K^M_3(\bar \F(x,y,z))\hat \otimes  \Q_\ell$ does not have $q$ as an eigenvalue. 
\end{abstract}
 
\maketitle

\section{Introduction}

An  important invariant of  a field $L$ is the Milnor K-theory
$
K^M_\ast (L) 
$, see \cite{milnor}.  
This is the graded algebra obtained as the quotient of the free tensor algebra on the abelian group $L^\ast$ of units in $L$, modulo the two-sided ideal generated by $a\otimes(1-a)$ with $a\in L\setminus\{0,1\}$.  
Elements of degree $i$ are given by finite linear combinations of symbols $(g_1,\dots ,g_i)$ with $g_1,\dots ,g_i \in L^\ast$, modulo the subgroup generated by the relation 
$
(g_1,\dots ,g_i)=0
$
whenever $g_a+g_b=1$ for some $1\leq a<b\leq i$.

We will be interested in the case when $L=\bar \F(X)$ for some geometrically irreducible variety $X$  over a finite field $\F=\F_q$ with $q$ elements.
In other words, $L$ is the function field of the base change $\bar X=X\times \bar \F$ of $X$ to the algebraic closure of $\F$. 
The arithmetic Frobenius $\Frob_{q}$ relative to $\F_q$ acts via $\id\times \Frob_q$ on $\bar X$ and this induces a $\Z$-linear action
$ \bar \F(X)^\ast \to \bar \F(X)^\ast$, given explicitly by raising the coefficients of a given rational function to the $q$-th power.
This induces a $\Z$-linear action
\begin{align} \label{eq:Frobenius-on-K(P^n)}
\Frob_{q}:K^M_i (\bar \F(X))\longrightarrow K^M_i (\bar \F(X)  )
\end{align}
on the Milnor K-groups of $\bar \F(X)$,  which we denote  by $\Frob_q$. 

The  $\ell$-adic completion 
$$
K^M_i (\bar \F(X))\hat \otimes \Z_\ell   := 
\lim_{\substack{\longleftarrow\\ r}} K^M_i (\bar \F(X))\otimes \Z/{\ell^r} 
$$
is a $\Z_\ell$-module with a natural Galois action by the absolute Galois group $G_\F:=\Gal(\bar \F/\F)$.  
We then get an associated $\Q_\ell$-vector space
$$
K^M_i (\bar \F(X))\hat \otimes \Q_\ell   := (K^M_i (\bar \F(X))\hat \otimes \Z_\ell )\otimes_{\Z_\ell} \Q_\ell 
$$ 
and the Frobenius action in \eqref{eq:Frobenius-on-K(P^n)} induces a $\Q_\ell$-linear map
\begin{align} \label{eq:Frobenius-on-K(P^n)-l-adic-completion}
\Frob_{q}: K^M_i (\bar \F(X))\hat \otimes \Q_\ell \longrightarrow K^M_i (\bar \F(X))\hat \otimes \Q_\ell  .
\end{align}
We say that $\lambda\in \Q_\ell$ is an eigenvalue of this map if $\Frob_{q}-\lambda\cdot \id $ has a nontrivial kernel on $K^M_i (\bar \F(X))\hat \otimes \Q_\ell  $.
 
The first main result of this paper is as follows.
%In this paper we develop further some results from joint work with Balkan \cite{Balkan-Sch} to prove the following.  
 
\begin{theorem} \label{thm:main:Sq-Milnor-K-3} 
Let $\F=\F_q$ be a finite field with $q$ elements and let $\ell$ be a prime invertible in $\F$. 
The Tate conjecture for divisors holds on all smooth projective varieties over $\F$ if and only if
$q$ is not an eigenvalue of the arithmetic Frobenius action
$$
\Frob_{q}: K^M_3 (\bar \F(\CP^3 ))\hat \otimes \Q_\ell \longrightarrow K^M_3 (\bar \F(\CP^3))\hat \otimes \Q_\ell  .
$$ 
\end{theorem}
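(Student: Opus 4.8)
The strategy is to connect the Milnor K-group $K^M_3(\bar\F(\CP^3))$ to motivic/étale cohomology via the Gersten–Bloch–Kato–Gabber machinery, and then translate the Tate conjecture for divisors into a statement about Frobenius eigenvalues. Recall first the classical reduction (going back to Tate and carefully treated by e.g.\ Morrow, Geisser, and others): the Tate conjecture for divisors on all smooth projective varieties over $\F$ is equivalent to its validity for divisors on a single ``universal'' family, and more precisely to the surjectivity of the cycle class map $\NS(\bar X)\otimes\Q_\ell \to H^2_{\et}(\bar X,\Q_\ell(1))^{G_\F}$ being equivalent to $q$ not being an eigenvalue of $\Frob_q$ on $H^2_{\et}(\bar X,\Q_\ell(1))$ modulo algebraic classes; one then reduces to the case $X=\CP^3$ (or blow-ups thereof) by a spreading-out and Bertini argument, since every smooth projective variety embeds in some $\CP^N$ and Lefschetz-type arguments bring $N$ down to $3$ for the divisor case. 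This is the ``input from arithmetic geometry'' half and I would cite it rather than reprove it.

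**The K-theoretic translation.** The new content is to identify the relevant piece of étale cohomology with (a summand controlled by) $K^M_3(\bar\F(\CP^3))\hat\otimes\Q_\ell$. Here the tool is the Gersten resolution for Milnor K-theory together with the Bloch–Kato–Gabber theorem $K^M_i(L)/\ell^r \cong H^i_{\et}(L,\mu_{\ell^r}^{\otimes i})$ for a field $L$ of characteristic prime to $\ell$. Applying this to $L=\bar\F(\CP^3)$ and taking the $\ell$-adic completion, one gets $K^M_3(\bar\F(\CP^3))\hat\otimes\Z_\ell \cong H^3_{\et}(\bar\F(\CP^3),\Z_\ell(3))$ (continuous étale cohomology of the generic point). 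The coniveau/Gersten spectral sequence for $\bar X=\CP^3_{\bar\F}$ then relates this generic-point cohomology to the cohomology of $\bar X$ itself: the first step of the coniveau filtration $N^1 H^3_{\et}(\bar X,\Q_\ell(3))$ sits inside $H^3$ of the generic point, and the quotient $H^3(\bar\F(\CP^3),\Q_\ell(3))$ modulo the image is governed by $\bigoplus_{x\in \bar X^{(1)}} H^1(\bar\F(x),\Q_\ell(2))$, i.e.\ by divisors. Chasing the spectral sequence and using that $\CP^3$ has no odd cohomology in low degrees, one matches the ``$q$-eigenspace'' of $\Frob_q$ on $K^M_3(\bar\F(\CP^3))\hat\otimes\Q_\ell$ with the failure of the Tate conjecture for divisors on the surfaces appearing as closed subvarieties of $\CP^3$ — and, via the spreading-out reduction above, with the Tate conjecture in general. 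The Frobenius weight bookkeeping is crucial: by Deligne's Weil conjectures the eigenvalues of $\Frob_q$ on $H^j_{\et}(\bar S,\Q_\ell)$ for a smooth projective surface $S$ have absolute value $q^{j/2}$, so the only place $q$ can occur as an eigenvalue on $H^3(\bar\F(\CP^3),\Q_\ell(3))$ after the Tate twist is exactly the part coming from $H^2(\bar S,\Q_\ell(1))$ of divisorial surfaces, pinning down the equivalence.

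**Main obstacle.** The technical heart — and the step I expect to be the main obstacle — is controlling the Gersten/coniveau spectral sequence after $\ell$-adic completion and tensoring with $\Q_\ell$, since completion is not exact and the Milnor K-sheaves have large, non-finitely-generated sections. One must check that the relevant differentials and extension terms behave well rationally: that $K^M_3(\bar\F(\CP^3))\hat\otimes\Q_\ell$ really computes continuous étale cohomology of the generic point (this uses that the residue maps in Milnor K-theory agree with the Galois-cohomological ones, Bloch–Kato–Gabber, plus a $\lim^1$-vanishing or Mittag-Leffler argument for the inverse limit), and that passing from $\CP^3$ back to arbitrary smooth projective varieties via de Jong alterations / Bertini hyperplane sections does not lose the $q$-eigenvalue. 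A secondary subtlety is that the Tate conjecture for divisors is \emph{a priori} about the cycle class map being surjective, whereas the eigenvalue condition is about semisimplicity-type information; bridging these requires the known equivalence (valid for divisors) between surjectivity of the $\ell$-adic cycle class map onto Tate classes and the non-appearance of $q$ as a generalized eigenvalue, i.e.\ that the characteristic-polynomial multiplicity of $q$ equals the rank of $\NS$. I would isolate this as a preliminary lemma and then feed it into the spectral-sequence computation above.
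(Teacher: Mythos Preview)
Your plan shares the paper's essential ingredients: reduction to surfaces via Morrow, the Bloch--Kato identification of $K^M_3\hat\otimes\Z_\ell$ with generic-point $\ell$-adic cohomology, and residue maps linking $H^3$ of $\bar\F(\CP^3)$ to $H^2$ of surfaces. The gap is that you correctly flag but do not resolve the completion problem, and your suggested route---controlling $\lim^1$ terms or Mittag--Leffler conditions for the full Gersten/coniveau spectral sequence after completion---is not what makes the argument go through. The paper instead proves a targeted injectivity statement: for any projective $S$ over $\F$, the natural map
\[
H^2(F_0\bar S,\Z_\ell(1))^{G_\F}\ \longrightarrow\ \bigl(H^2(F_0\bar S,\Z_\ell(1))\hat\otimes\Z_\ell\bigr)^{G_\F}
\]
is injective, and both sides vanish when Tate holds for divisors on a prime-to-$\ell$ alteration of $S$. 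The proof hinges on the degree-$2$-specific fact that $N^1H^2(\bar S,\Z/\ell(1))$ is generated by \emph{algebraic} classes, so that any Galois-invariant class at the generic point lifts integrally to $H^2(\bar S,\Z_\ell(1))^{G_\F}$, a finitely generated $\Z_\ell$-module in which $\ell^\infty$-divisibility forces vanishing. No general spectral-sequence bookkeeping delivers this; it is precisely why the method is confined to divisors, and your outline does not isolate this ingredient.

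With that lemma in hand, both implications are short. Since $H^3_{nr}(\CP^3_{\bar\F},\mu_{\ell^r}^{\otimes 3})=0$ for all $r$, any nonzero $q$-eigenvector in $K^M_3(\bar\F(\CP^3))\hat\otimes\Q_\ell$ must have a nonzero residue at some codimension-one point, landing in the right-hand side of the displayed map for the corresponding surface; injectivity then produces a nonzero class on the left-hand side, which contradicts Tate via the \emph{uncompleted} criterion of \cite{Balkan-Sch}. Conversely, Tate for all surfaces forces both sides above to vanish for every $S$, so every residue of a putative eigenvector dies and the class is unramified, hence zero. Your Deligne-weight filtering of the spectral sequence is thus replaced by the more elementary vanishing of unramified $H^3$ on a rational threefold; weight arguments enter only inside the proof of the injectivity lemma, to kill the further residues on curves and points. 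Finally, your secondary subtlety about semisimplicity versus surjectivity is dispatched by Milne's result that Tate for divisors already implies $1$-semisimplicity in degree one, so no extra hypothesis is needed.
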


For the Tate conjecture for divisors, we refer the reader to the survey \cite{totaro-tate} and the references therein. % (see also Section \ref{subsec:cycle-conjectures} below). 
de Jong and Morrow \cite{morrow} showed  that the Tate conjecture for divisors is equivalent to the Tate conjecture for divisors on surfaces.
The above theorem shows that this is in turn entirely encoded in the third Milnor K-group of the purely transcendental extension $\bar \F(x,y,z)$ in three variables over $\bar \F$ together with its natural Frobenius action.
This may be seen as an elementary reformulation of the Tate conjecture for divisors in terms of a fairly explicit algebraic problem about the function field $\bar \F(x,y,z)$ in three variables.

Theorem \ref{thm:main:Sq-Milnor-K-3} develops further some results from joint work with Balkan \cite{Balkan-Sch}.
While the results in \cite{Balkan-Sch} are valid in arbitrary codimension, we do not know if Theorem \ref{thm:main:Sq-Milnor-K-3} admits generalizations to  cycles of higher codimension.

By Theorem \ref{thm:main:Sq-Milnor-K-3}, the Tate conjecture for divisors is equivalent to the injectivity of  $ \Frob_{q} -q\cdot \id$ on $ K^M_3 (\bar \F(\CP^3))\hat \otimes \Q_\ell  $. 
The next theorem collects some related results around this question.

\begin{theorem} \label{thm:basic-properties}
Let $\F=\F_q$ be a finite field with $q=p^m$ elements and let $\ell$ be a prime invertible in $\F$.
The following hold true:
\begin{enumerate}
\item $K^M_3 (\bar \F(\CP^3))\hat \otimes \Z_\ell \cong M\hat \otimes \Z_\ell$ for a  $G_\F$-module $M$ which is $\Z$-free of countable rank. \label{item1:thm:basic-properties}
\item The operator $\Frob_{q} -q\cdot \id$ is injective on $K^M_3 (\bar \F(\CP^3))\otimes\Q_\ell$.\label{item2:thm:basic-properties}
\item The operator $\Frob_{q} -q\cdot \id$ is injective on the $p$-adic completion $K^M_3 (\bar \F(\CP^3))\hat \otimes \Q_p$.\label{item3:thm:basic-properties}
\item There is a Galois-equivariant surjection 
$$
((\bar \F(\CP^3)^\ast)^{\otimes 3})\hat \otimes \Q_\ell\twoheadrightarrow K^M_3 (\bar \F(\CP^3))\hat \otimes \Q_\ell
$$
and the action of $\Frob_{q} -q\cdot \id$ on $((\bar \F(\CP^3)^\ast)^{\otimes 3}) \hat \otimes \Q_\ell$ is injective.\label{item4:thm:basic-properties}
\item If we set
$$
S_q:=\{\lambda\in \Z \mid \text{ there is a nonzero class $\alpha\in K^M_3(\bar \F(\CP^3))\hat \otimes \Q_\ell $ with $\Frob_q \alpha=\lambda\alpha$} \},
$$ 
then 
$S_q\subset \{\pm1,\pm q^{1/2},\pm q\}$  and $\{\pm1, \pm q^{1/2}\}\subset S_q$ for  $q=p^{4m}$.\label{item5:thm:basic-properties}
\end{enumerate}
\end{theorem}

Some comments   are in order.

Item \eqref{item1:thm:basic-properties} shows that the a priori very complicated $\Z_\ell$-module $K^M_3 (\bar \F_q(\CP^3))\hat \otimes \Z_\ell$ that is the key player in Theorem \ref{thm:main:Sq-Milnor-K-3} is in fact quite simple: it is the $\ell$-adic completion of a  $G_\F$-module $M$ whose underlying $\Z$-module is free of countable rank.
We will see in the proof (see Theorem \ref{thm:K_n-completed-countable-free} below) that $M$ is a $G_\F$-submodule of a countable direct sum $N=\bigoplus P_i$ of permutation $G_\F$-modules $P_i\cong \Z^{n_i}$.
The Frobenius eigenvalues on $N\hat \otimes \Q_\ell$ are roots of unity, see Lemma \ref{lem:permutation} below.
Item \eqref{item5:thm:basic-properties} is thus somewhat surprising, as it says that the  submodule $M\subset N$ has the property that $M\hat \otimes \Q_\ell$ has $q^{1/2}$ as eigenvalue as long as $q=p^{4m}$ is a 4-th power. 

Item \eqref{item2:thm:basic-properties}  follows from the simple observation that $\Frob_q$ acts via finite orbits on any element in $K^M_n(\bar \F(X))$.
This does of course not imply the criterion formulated in Theorem \ref{thm:main:Sq-Milnor-K-3}, because $\otimes \Q_\ell$ and $\hat \otimes \Q_\ell$ do in general not agree on non-finitely generated abelian groups. 

Item \eqref{item3:thm:basic-properties} asserts that the operator in question is injective on the $p$-adic completion, opposed to the $\ell$-adic completion needed in Theorem \ref{thm:main:Sq-Milnor-K-3}.  
In other words,  the $p$-adic version of the criterion in Theorem \ref{thm:main:Sq-Milnor-K-3} holds in fact true.
This is somewhat remarkable because the $\ell$-adic Tate conjecture for divisors is known to be equivalent to the $p$-adic version in crystalline cohomology, see \cite[Proposition 4.1]{morrow}. 

Item \eqref{item4:thm:basic-properties} shows that  $\Frob_q-q\cdot \id$ is injective on the $\ell$-adic completion $(\bar \F(\CP^3)^\ast)^{\otimes 3}\hat \otimes \Q_\ell$, which surjects onto $K^M_3 (\bar \F(\CP^3))\hat \otimes \Q_\ell$.
As before,  this does not imply the criterion  in Theorem \ref{thm:main:Sq-Milnor-K-3} because, in contrast to the case of finite-dimensional vector spaces,  an injective operator on an infinite dimensional vector space may descend to a non-injective operator on some quotient.  

Item \eqref{item5:thm:basic-properties} shows that the possible integral eigenvalues $\lambda\in \Z$ of $\Frob_q$ on $K_3^M(\bar \F(\CP^3))\hat \otimes \Z_\ell$ are $\pm 1$, $\pm q^{1/2}$, and $\pm q$.
A similar result will be proven for $K_3^M(\bar \F(Y))\hat \otimes \Q_\ell$ as long as $H^3_{nr}(\bar Y,\mu_{\ell^r}^{\otimes 3})=0$ for all $r$, see Proposition \ref{prop:weights-for-completed-K-theory} below.
This may be seen as a certain weight result for the integral eigenvalues of the Frobenius action on completed Milnor K-theory.
We do not know if the result generalizes to larger degree.

For sufficiently divisible powers $q=p^m$,  item \eqref{item5:thm:basic-properties} shows that the eigenvalues  $\pm 1$ and $\pm q^{1/2}$ actually occur.
By Theorem \ref{thm:main:Sq-Milnor-K-3}, the question whether one of the remaining eigenvalues $\pm q$ occurs is equivalent to the Tate conjecture for divisors.
(Note that if $-q$ occurs as eigenvalue of $\Frob_q$, then $q^2$ occurs for $\Frob_{q^2}$.)

\section{Preliminaries and conventions}
 \subsection{Conventions} 

For an abelian group $M$, we denote by
$$
M\hat \otimes \Z_\ell:=\lim_{\substack{\longleftarrow\\ r}} M/\ell^r\ \ \ \text{and}\ \ \ M\hat \otimes \Q_\ell=(M\hat \otimes \Z_\ell)\otimes_{\Z_\ell} \Q_\ell
$$
the $\ell$-adic completion of $M$ and its associated $\Q_\ell$-vector space, respectively. 
An element $m\in M$ is $\ell^r$-divisible (or divisible by $\ell^r$) if $m=\ell^r\cdot m'$ for some $m'\in M$; it is $\ell^\infty$-divisible if it is $\ell^r$-divisible for all $r\geq 1$.
The latter is equivalent to asking that the image of $m$ in $M\hat\otimes \Z_\ell$ vanishes.

For a field $k$ with a separable closure $k_s$, $G_k\coloneqq \Gal(k_s/k)$ denotes the absolute Galois group of $k$.  
An algebraic scheme is a separated scheme of finite type over a field. 
A variety is an integral algebraic scheme.

Throughout,  $\F=\F_q$ denotes a finite field with $q$ elements and $\bar \F$ denotes its algebraic closure.   
The arithmetic Frobenius relative to $\F=\F_q$ is denoted by $\Frob_q\in G_{\F}$.

Let $G:=\hat \Z$ be the absolute Galois group of a finite field.
We say that a $G$-module $P$ is a permutation $G$-module if $P\cong \Z^n$ admits a finite basis $\mathcal B$ and $G$ acts via permutation on $\mathcal B$. 
Since $\mathcal B$ is finite, this action must factor through a finite quotient $G\to \Z/m$. 
Consideration of the orbits of this action on $\mathcal B$ shows that $P$ decomposes into the finite sum of its indecomposable $G$-submodules, which are themselves permutation modules; moreover, $P$ is indecomposable if and only if the $G$-action on $\mathcal B$ has only one orbit and $G$ acts via cyclic permutation.

\subsection{Milnor K-groups} \label{subsec:Milnor}
If $L$ is a field, then $K^M_n(L)$ denotes the $n$-th Milnor K-group of $L$, see \cite{milnor}.
If $L=\prod_{i\in I} L_i$ is a product of finitely many fields, then we define $K^M_n(L):=\prod_{i\in I} K^M_n(L_i)$ to be the product  of the Milnor K-groups $K^M_n(L_i)$.
This agrees with the direct sum because $I$ is finite.

For instance, if $X$ is a variety over $\F$, then $\bar \F(X)$ is a product of fields (namely the function fields of the components of $\bar X=X\times \bar \F$) and so $K^M_n(\bar \F(X))$ is defined.
Similarly, if $x\in X$ denotes a schematic point, then $\bar x$ denotes the base change to $\bar \F$.
This is a finite union of points $\bar x=\{\bar x_1,\dots ,\bar x_m\}$ in $\bar X$ which form an orbit under the $G_\F$-action.  
The Milnor K-group $K^M_n(\kappa(\bar x))$ is then defined as the product  (equivalently the sum) of the Milnor K-groups $K^M_n(\kappa(\bar x_i))$ of the residue fields of the points $\bar x_i$.

If $x\in X^{(1)}$ is a codimension one point contained in the smooth locus of $X$, then there is a residue map $\del_x:K^M_n(\F(X))\to K^M_{n-1}(\kappa(x))$, see \cite{milnor}.
Moreover, with the above conventions, there is also a natural residue map $\del_{\bar x}:K^M_n(\bar \F(X))\to K^M_{n-1}(\kappa(\bar x))$.

\subsection{\'Etale cohomology}

If $X$ is an algebraic scheme over a  field $k$ and $\ell$ is a prime invertible in $k$, then we denote by $H^i(X,\mu_{\ell^r}^{\otimes n})$, $H^i(X,\Z_{\ell}(n))$, and $H^i(X,\Q_\ell(n))$ the respective continuous \'etale cohomology groups,  see \cite{jannsen}. 
If $k$ is a finite field or the algebraic closure of a finite field, then these groups agree with ordinary \'etale cohomology by \cite[(0.2)]{jannsen}, because $H^i(X,\mu_{\ell^r}^{\otimes n})$ is finite in this case and so the Mittag--Leffler condition holds true.
For convenience we denote the coefficients $\mu_{\ell^r}^{\otimes n}$ sometimes by $\Z/\ell^r(n)$.

If $X$ is a  scheme over a finite field $\F$, then we denote by $\bar X=X\times \bar \F$ the base change to an algebraic closure of $\F$.
We denote by $\ell$ a prime invertible in $\F$. % and by $ H^i(\bar X,\Q_\ell(n))$ the $\ell$-adic \'etale cohomology of $\bar X$ with coefficients in $\Q_\ell(n)$.
The Galois group $G_\F$ acts on the second factor of $\bar X=X\times \bar \F$, which induces a $\Q_\ell$-linear Galois action on $ H^i(\bar X,\Q_\ell(n))$.
The Galois-invariant subspace is denoted by $ H^i(\bar X,\Q_\ell(n))^{G_\F}$.

We denote by $\Frob_q$ the endomorphism of $\bar X$ induced by $\id\times \Frob_q$; its action on \'etale cohomology is accordingly denoted by $\Frob_q^\ast$.
The analogous action on Milnor K-theory $K^M_n(\bar \F( X)) $ is induced by the $\Frob_q$-action on $\bar \F(X)^\ast$ given by raising the coefficients of a rational function to its $q$-th power.
This action will be denoted by $\Frob_q$; both actions are compatible with each other, see Theorem \ref{thm:bloch-kato} below.

\subsection{Cycle conjectures} \label{subsec:cycle-conjectures}

Let  $X$ be a smooth projective variety over a finite field $\F$ and fix a prime $\ell$ invertible in $\F$.
We say that the $1$-semi-simplicity conjecture holds in degree $i$ if the Frobenius action on $H^{2i}(\bar X,\Q_\ell(i))$ is semi-simple at the eigenvalue $1$, i.e.\ the generalized eigenspace and the eigenspace agree at the eigenvalue $1$.
We further say that the Tate conjecture holds in degree $i$ if the cycle class map
$$
\cl_X^i:\CH^i(X)\otimes_\Z \Q_\ell\longrightarrow H^{2i}(\bar X,\Q_\ell(i))^{G_\F}
$$
is surjective.
We also say that the respective conjecture holds for divisors if it holds in degree 1.

Both conjectures are trivially true in degree $0$.
The conjectures hold for divisors on abelian varieties \cite{tate-conj} and K3 surfaces   
\cite{maulik, charles,madapusipera,kim-madapusipera}, 
but are in general wide open even for divisors on smooth projective surfaces.
In fact, by work of de Jong and Morrow \cite{morrow}, the case of divisors on smooth projective varieties of arbitrary dimension is equivalent to the case of divisors on smooth projective surfaces.

We further have the following result of Milne, see \cite[Proposition 8.2 and Remark 8.5]{milne-AJM}.

\begin{lemma}[Milne] \label{lem:milne}
Let $X$ be a smooth projective variety over a finite field $\F$.
Assume that the Tate conjecture holds for divisors on $X$. % with respect to a prime $\ell$ invertible in $\F$.
Then the following holds:
\begin{enumerate}
\item the $1$-semi-simplicity conjecture holds in degree $1$ on $X$;
\item the Tate conjecture for divisors and the 1-semi-simplicity conjecture in degree 1 on $X$ hold for any prime $\ell'$ invertible in $\F$. 
\end{enumerate} 
\end{lemma}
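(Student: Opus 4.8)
The plan is to derive both statements from the standard dictionary between the Tate conjecture, the Tate conjecture after enlarging the base field, and the behaviour of Frobenius eigenvalues on $H^2(\bar X,\Q_\ell(1))$, exactly as in Milne's paper \cite{milne-AJM}, so in our write-up we would simply cite \cite[Proposition 8.2 and Remark 8.5]{milne-AJM} and sketch the argument. First I would recall that $\NS(\bar X)\otimes \Q_\ell$ injects into $H^2(\bar X,\Q_\ell(1))$ with image contained in the subspace $V_1$ on which Frobenius acts with generalized eigenvalue $1$ (by the Weil conjectures, every cycle class is fixed by a power of Frobenius, and no other eigenvalue of $\Frob_q$ on $H^2(\bar X,\Q_\ell(1))$ can be a root of unity). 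Write $V_1 = \ker\big((\Frob_q - \id)^N\big)$ for $N \gg 0$; the Tate conjecture for divisors on $X$ (and on all $X_{\F'}$ for finite $\F'/\F$, which follows from the Tate conjecture for $X$ itself by a transfer/trace argument, since $\CH^1(X_{\F'})\to \CH^1(\bar X)^{G_{\F'}}$ being surjective for all $\F'$ is equivalent to surjectivity of $\cl^1_X$) forces $\NS(\bar X)\otimes\Q_\ell \to V_1^{G_\F}$ to be surjective. Combined with injectivity, one gets $\dim_{\Q_\ell} V_1 \geq \dim_{\Q_\ell}V_1^{G_\F} \geq \dim_{\Q_\ell}\NS(\bar X)\otimes\Q_\ell$; but the order of vanishing of $\det(1 - \Frob_q t \mid H^2(\bar X,\Q_\ell(1)))$ at $t=1$ equals $\dim V_1$ (algebraic multiplicity) while $\NS(\bar X)\otimes\Q_\ell$ surjecting onto $V_1^{G_\F}$ forces $\dim V_1^{G_\F}$ (geometric multiplicity) to be at least $\dim\NS$. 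Chasing these inequalities against the inequality $\dim V_1^{G_\F}\leq \dim V_1$ pins down $\dim V_1^{G_\F} = \dim V_1 = \rho$, where $\rho$ is the rank of $\NS(\bar X)$; equality of geometric and algebraic multiplicity of the eigenvalue $1$ is precisely semisimplicity at $1$, giving statement (1).

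For statement (2), the point is that $\dim_{\Q_\ell} V_1$ equals the order of the pole at $t=1$ of the zeta-type factor $\det(1-\Frob_q t\mid H^2)^{-1}$, and this order is independent of $\ell$ because the characteristic polynomial of $\Frob_q$ on $H^2(\bar X,\Q_\ell(1))$ has integer coefficients independent of $\ell$ (Deligne's purity plus independence of $\ell$ for smooth projective varieties over finite fields). Meanwhile $\dim_\Q \NS(\bar X)\otimes\Q$ is manifestly independent of $\ell$. Having shown in (1) that for our fixed $\ell$ the Tate conjecture for divisors is equivalent to the numerical equality $\rho = \dim V_{1,\ell}$ together with semisimplicity at $1$, and knowing $\rho$ and $\dim V_{1,\ell}$ do not depend on $\ell$, one deduces that once the Tate conjecture holds for one $\ell$ it holds for every $\ell'$ invertible in $\F$, and the same equality of multiplicities gives $1$-semisimplicity for every such $\ell'$ as well. (One also invokes that $\NS(\bar X)$ is finitely generated and the cycle class map is compatible across different $\ell$ via comparison with, e.g., crystalline or numerical equivalence, so the subgroup of $H^2$ generated by cycle classes has the same rank for all $\ell$.)

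I expect the main obstacle to be purely expository: assembling, in a self-contained way, the equivalence between ``$\cl^1_X$ is surjective'' and ``$\cl^1_{X_{\F'}}$ is surjective for all finite $\F'/\F$, equivalently $\NS(\bar X)^{G_{\F'}}$ is spanned by cycle classes for all $\F'$'', and the clean identification of $\dim V_1$ (algebraic multiplicity of the Frobenius eigenvalue $1$) with the pole order of the relevant zeta factor, which is the $\ell$-independent integer that makes the whole argument go through. Since all of this is classical and due to Tate and Milne, in the paper I would give the two-paragraph sketch above and refer to \cite[Proposition 8.2 and Remark 8.5]{milne-AJM} and \cite{tate-conj} for the details rather than reproving it.
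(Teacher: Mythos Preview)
The paper does not prove this lemma at all: it is stated with attribution to Milne and the single sentence ``We further have the following result of Milne, see \cite[Proposition 8.2 and Remark 8.5]{milne-AJM}'' preceding it serves as the entire justification. Your plan to cite exactly those references is therefore precisely what the paper does, and the two-paragraph sketch you offer already goes beyond it.

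One small correction to your sketch, should you keep it: the parenthetical claim that ``no other eigenvalue of $\Frob_q$ on $H^2(\bar X,\Q_\ell(1))$ can be a root of unity'' is false in general (e.g.\ a Galois-conjugate pair of divisor classes swapped by $\Frob_q$ contributes an eigenvalue $-1$). The image of $\NS(\bar X)\otimes\Q_\ell$ lands in the sum of generalized eigenspaces for \emph{all} root-of-unity eigenvalues, not just in $V_1$. This is harmless for the argument because you immediately pass to a finite extension $\F'$ over which all of $\NS(\bar X)$ is defined, and then the relevant operator is $\Frob_{q'}$; but the sentence as written should be removed or rephrased.
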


\subsection{Filtrations and unramified cohomology} 
For an equi-dimensional algebraic scheme $X$ over a field $k$, we  define $X^{(j)}$ as the set of all codimension-$j$ points of $X$, i.e.\ $X^{(j)}\coloneqq\{x\in X\big|\dim X-\dim\overline{\{x\}}=j\}$.
We further denote by 
$$
F_jX\coloneqq\{x\in X\big|\dim X-\dim\overline{\{x\}}\leq j\}
$$
the set of points of codimension at most $j$.
This yields a filtration $F_0X\subset F_1X\subset \dots \subset X$ and we may regard $F_jX$ as a pro-scheme given by the system of all open subsets of $X$ that contain $X^{(j)}$, see \cite{Sch-refined}. 
For us the case $j=0$ will be particularly important and we may think about $F_0X$ as the set of generic points of $X$.

Let  $A(n)\in \{\mu_{\ell^r}^{\otimes n},\Z_\ell(n),\Q_\ell(n)\}$, where $\ell$ is a prime invertible in $k$. 
We then write
 $$
 H^i(F_jX,A(n)):=\lim_{\substack{\longrightarrow \\   U\subset X}} H^i(U,A(n))  ,
 $$
 where $U\subset X$ runs through all open subsets with $F_jX\subset U$.
If $X=Y_k$ for a variety $Y$ defined over a subfield $k'\subset k$, then any element in $\Gal(k/k')$, i.e.\ any field automorphism of $k$ that fixes $k'$, acts naturally on $ H^i(F_jX,A(n))$.
(This uses that we  may, under the given assumptions, run in the above direct limit through those open subsets that are defined over $k'$.)

The Bockstein sequence for $X$ yields in the direct limit a Bockstein sequence
$$
\dots \longrightarrow H^i(F_jX,\Z_\ell(n))\stackrel{\times \ell^r}\longrightarrow H^i(F_jX,\Z_\ell(n))\longrightarrow H^i(F_jX,\mu_{\ell^r}^{\otimes n})\longrightarrow H^{i+1}(F_jX,\Z_\ell(n))\longrightarrow \dots 
$$
that we will use, cf.\ \cite{Sch-refined}.

We denote by $N^\ast$ the coniveau filtration on $H^i(X,A(n))$, which is defined via the kernel of the natural map
$$
N^jH^i(X,A(n)):=\ker(H^i(X,A(n))\longrightarrow  H^i(F_{j-1}X,A(n)) ).
$$
We will further use the shorthand notation
$$
H^i(X,A(n))/N^j:=H^i(X,A(n))/N^jH^i(X,A(n)).
$$
 
 If $X$ is smooth, then for any codimension one point $x\in X^{(1)}$,  there is a residue map
 $$
 \del_x:H^i(F_0X,A(n))\longrightarrow H^{i-1}(x,A(n-1)),
 $$
 where $H^{i-1}(x,A(n-1))=H^{i-1}(F_0\overline{\{x\}},A(n-1))$.
 The unramified cohomology of $X$ with values in $A(n)$ is the subgroup $ H^i_{nr}(X,A(n))\subset H^i(F_0X,A(n))$ of classes that lie in the kernel of $\del_x$ for all $x\in X^{(1)}$, see \cite{CT,Sch-survey}. 
 As a consequence of purity and the localization/Gysin sequence, we have 
 $$
 H^1_{nr}(X,A(n))=H^1(X,A(n))
 $$
and
$$
H^2_{nr}(X,A(n))=H^2(X,A(n))/N^1 .
$$
Moreover, $N^1\subset H^2(X,A(n))$ agrees for $n=1$ with the subspace of algebraic classes, see \cite[\S 4.2]{CT} or \cite[Lemma 5.8 and Corollary 5.10]{Sch-refined}.

\subsection{Consequence of the Bloch--Kato conjecture}
 If $X$ is defined over $k$ and $\bar X=X\times \bar k$ denotes the base change to an algebraic (or separable) closure of $k$, then $ H^i(F_j\bar X,A(n)) $ admits a natural Galois action by the group $G_k$.
% For any open subset $U\subset X$,  Jannsen's Kummer sequence (see \cite[(3.27)]{jannsen}) yields a natural $G_k$-equivariant map
% $
% \mathcal O^\ast (\barU)\to H^1(\bar U,\Z_\ell(1))
% $
% which yields in the 
 
\begin{theorem}[Voevodsky] \label{thm:bloch-kato}
Let $X$ be a variety over a field $k$ and let $\ell$ be a prime invertible in $k$.
Then the following holds:
\begin{enumerate}
\item \label{eq:Bloch-Kato-torsion-free-0} $H^{i+1}(F_0X,\Z_{\ell}(i))$ is torsion-free;
%\item \label{eq:Bloch-Kato-torsion-free-1} if $k$ contains all $\ell$-power roots of unity, then $H^{i}(F_0X,\Z_{\ell}(j))$ is torsion-free for all $i,j$; 
\item there is a canonical isomorphism
\begin{align} \label{eq:Bloch-Kato-2}
 K^M_i(k(X))\hat \otimes \Z_\ell \stackrel{\cong}\longrightarrow  H^i(F_0X,\Z_{\ell}(i))\hat \otimes \Z_\ell.
\end{align}
If $X=Y_k$ for a variety $Y$ defined over a subfield $k'\subset k$, then the above isomorphism is equivariant with respect to the natural $\Aut(k/k')$-action on both sides, where $\Aut(k/k')$ denotes the group of field automorphisms of $k$ that fix $k'$.
\end{enumerate} 
\end{theorem} 
\begin{proof}
The Bloch--Kato conjecture proven by Voevodsky \cite{Voe} yields a canonical Galois-equivariant isomorphism
\begin{align} \label{eq:Bloch-Kato}
 K^M_i(k(X))/\ell^r \stackrel{\cong}\longrightarrow  H^i(F_0X,\mu_{\ell^r}^{\otimes i}) ,
\end{align}
where we used $H^i(F_0X,\mu_{\ell^r}^{\otimes i})\cong H^i(\Spec k(X),\mu_{\ell^r}^{\otimes i}) $, see \cite[p.\ 88,  III.1.16]{milne}.
Bloch noticed that this in turn implies by the Bockstein sequence that $H^{i+1}(F_0X,\Z_{\ell}(i))$ is torsion-free, see \cite[end of Lecture 5]{bloch-book} or \cite[Remark 5.14]{Sch-refined}.
This proves \eqref{eq:Bloch-Kato-torsion-free-0}.
Using this and the Bockstein sequence, we get 
a canonical Galois-equivariant isomorphism
$$
H^i(F_0X,\mu_{\ell^r}^{\otimes i})=H^i(F_0X,\Z_{\ell}(i))/\ell^r .
$$
Hence, \eqref{eq:Bloch-Kato}  induces a Galois-equivariant isomorphism on $\ell$-adic completions
\begin{align*} 
 K^M_i(k(X))\hat \otimes \Z_\ell \stackrel{\cong}\longrightarrow  H^i(F_0X,\Z_{\ell}(i))\hat \otimes \Z_\ell,
\end{align*}
as we want.
This proves item \eqref{eq:Bloch-Kato-2} and hence concludes the proof of the theorem.
\end{proof}

\begin{corollary}\label{cor:torsion-free}
Let $X$ be a variety over a field $k$ which contains all $\ell$-power roots of unity. 
Then $H^{i}(F_0X,\Z_{\ell}(j))$ is torsion-free for all $i,j\in \Z$.
\end{corollary}

\begin{proof}
If $k$ contains all $\ell$-power roots of unity, then the $\Z_\ell$-modules $H^{i}(F_0X,\Z_{\ell}(j))$ and $H^{i}(F_0X,\Z_{\ell}(i-1))$ are isomorphic (this isomorphism does of course not respect the respective Galois actions).
The corollary therefore follows directly from item \eqref{eq:Bloch-Kato-torsion-free-0} in Theorem \ref{thm:bloch-kato}. 
\end{proof}
 
  \begin{remark}
  The above theorem shows that the $\ell$-adic completion $K^M_n(k(X))\hat \otimes \Z_\ell$ is related to the $\ell$-adic cohomology of the generic point of $X$.
  In particular, if $X$ is smooth projective, then there is a natural map 
  $$
  H^n(X,\Z_\ell(n))\longrightarrow K^M_n(k(X))\hat \otimes \Z_\ell.
  $$
  This explains the idea that some cycle conjectures, resp.\ conjectures on $\ell$-adic cohomology, may have interpretations in terms of completed Milnor K-theory.
Note however that the above map is zero if $ H^n(X,\mu_{\ell^r}^{\otimes n})=N^1H^n(X,\mu_{\ell^r}^{\otimes n})$ for all $r\geq 1$.
 Remarkably,  for  $k=\bar \F$ and $n\geq 3$,  it is to the best knowledge of the author an open problem if the latter always holds,  
 see \cite[p.\ 305]{bloch-esnault}. 
It is likely that this is not the case in general, but if it holds for a given variety $X$, then no information of its $n$-th $\ell$-adic \'etale cohomology is captured by the completed Milnor K-group $K^M_n(k(X))\hat \otimes \Z_\ell$.
This explains one important subtlety of the relation between completed Milnor K-theory and \'etale cohomology.
We circumvent this problem in the present paper via the  observation that $N^1H^2(X,A(n))$ is generated by algebraic classes and so the above question is well-understood in the case of degree 2 cohomology.
  \end{remark} 
 
 \subsection{Prime to $\ell$ alterations}
 Let $X$ be a variety over a field $k$.
 An alteration of $X$ is a  a proper surjective and generically finite morphism $\tau:X'\to X$ such that $X'$ is smooth.
 Alterations exist by the work of de Jong \cite{deJong}.
 By an improvement due to Gabber \cite{IT}, we can moreover assume that $\deg(\tau)$ is coprime to any given prime $\ell$ that is invertible in $k$.
 Alterations with this property are called prime to $\ell$ alterations.
 In this case we have for instance $\tau_\ast\circ \tau^\ast=\deg(\tau)\cdot \id$  on $H^i(X,A(n))$ and on $H^i(F_0X,A(n))$, see e.g.\ \cite[Lemma 2.1]{Sch-refined}.
 For $A(n)\in \{\mu_{\ell^r}^{\otimes n},\Z_\ell(n),\Q_\ell(n)\}$, $\deg(\tau)$ will be invertible in the above groups (as it is coprime to $\ell$) and so various cohomological questions on $X$ can be checked on $X'$.

\section{The Tate conjecture and $\ell$-adically completed Milnor K-theory}

The goal of this section is to prove Theorem \ref{thm:main:Sq-Milnor-K-3}.
We start with two  lemmas.

\begin{lemma} \label{lem:lifts invariant classes integrally-i=2}
Let $X$ be a smooth projective variety over a finite field $\F$. 
Then $ H^{2}(F_0\bar X,\Z_\ell(1))^{G_\F}$ is a finitely generated $\Z_\ell$-module. 
\end{lemma}

\begin{proof}
We have the long exact sequence
$$
H^2(\bar X,\Z_\ell(1)) \stackrel{f}\longrightarrow H^{2}(F_0\bar X,\Z_\ell(1)) \stackrel{\del}\longrightarrow \bigoplus_{x\in X^{(1)}} H^1(\bar x, \Z_\ell),
$$
see \cite[Lemma 5.8 and Corollary 5.10]{Sch-refined}.
(Note that $\bar x$ is the base change of the point $x\in X^{(1)}$ to $\bar k$ and hence it is a finite union of points which form the Galois orbit under the $G_\F$-action which corresponds to $x$.)
Let now $\alpha\in  H^{2}(F_0\bar X,\Z_\ell(1))$ be $G_\F$-invariant.
Then $\del \alpha$ is $G_\F$-invariant and so it is torsion because $\bigoplus_{x\in X^{(1)}} H^1(\bar x, \Q_\ell)$ has weight at least 1  and hence contains no nontrivial Galois-invariant classes,  see e.g.\ \cite[Corollary 3.10]{Balkan-Sch}.
Since $\bigoplus_{x\in X^{(1)}} H^1(\bar x, \Z_\ell)$ is torsion-free (see Theorem \ref{thm:bloch-kato} but the case at hand is simple, see e.g.\ \cite[Lemma 5.13]{Sch-refined}), we conclude that $\del \alpha=0$ and so $\alpha$ lifts to $H^2(\bar X,\Z_\ell(1))$.
In other words,  $H^{2}(F_0\bar X,\Z_\ell(1))^{G_\F}$ is a $\Z_\ell$-submodule of the image of the map $f$ above.
Since $H^2(\bar X,\Z_\ell(1))$ is a finitely-generated $\Z_\ell$-module, so is the image of $f$.
This implies that $H^{2}(F_0\bar X,\Z_\ell(1))^{G_\F}$ is a finitely generated $\Z_\ell$-module,  as we want.
\end{proof}

\begin{lemma} \label{lem:galois invariants completion}
Let $X$ be an equi-dimensional algebraic $\F$-scheme such that  
$ 
H^{2i}(F_0\bar X,\Z_\ell(i))^{G_\F}
$ 
is a finitely generated $\Z_\ell$-module.
Then the natural map
$$
H^{2i}(F_0\bar X,\Z_\ell(i))^{G_\F}\longrightarrow \left(  H^{2i}(F_0\bar X,\Z_\ell(i))\hat \otimes \Z_\ell \right)^{G_\F}
$$
is injective.
In particular, if $(H^{2i}(F_0\bar X,\Z_\ell(i))\hat \otimes \Z_\ell)^{G_\F}=0$,  then 
$ 
H^{2i}(F_0\bar X,\Z_\ell(i))^{G_\F}=0.
$ 
\end{lemma}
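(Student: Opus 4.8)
The plan is to abbreviate $V\coloneqq H^{2i}(F_0\bar X,\Z_\ell(i))$ and $W\coloneqq H^{2i}(\bar X,\Z_\ell(i))$. The natural map $V\to V\hat\otimes\Z_\ell=\lim_r V/\ell^r V$ is $G_\F$-equivariant, and its kernel is exactly the subgroup $D\subset V$ of $\ell^\infty$-divisible elements (an element of an abelian group maps to zero in its $\ell$-adic completion precisely when it is $\ell^\infty$-divisible). Restricting to Galois invariants, the kernel of the map in the statement is therefore $V^{G_\F}\cap D$, so everything reduces to proving $V^{G_\F}\cap D=0$; the ``in particular'' clause is then immediate, since an injection into a group with no Galois invariants forces $V^{G_\F}=0$.

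Before running the divisibility argument I would record two facts. First, $V$ is torsion-free: since $\bar\F$ contains all roots of unity there is an isomorphism of abelian groups $V\cong H^{2i}(F_0\bar X,\Z_\ell(2i-1))$, and the right-hand side is torsion-free by Theorem~\ref{thm:bloch-kato}(1); this is the same step already used in the proof of Lemma~\ref{lem:lifts invariant classes integrally}. Second --- and this is the only place the hypothesis is used --- $V^{G_\F}$ is a free $\Z_\ell$-module of finite rank: $W$ is a finitely generated $\Z_\ell$-module (a standard finiteness property of $\ell$-adic \'etale cohomology of schemes of finite type over $\bar\F$), hence so is the submodule $W^{G_\F}$, hence so is its image in $V^{G_\F}$; by the surjectivity hypothesis that image is all of $V^{G_\F}$, and a finitely generated submodule of the torsion-free module $V$ is free of finite rank over $\Z_\ell$.

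Now let $v\in V^{G_\F}$ be $\ell^\infty$-divisible in $V$, say $v=\ell^r v_r$ with $v_r\in V$ for every $r\geq 1$. For each $\sigma\in G_\F$ we get $\ell^r(\sigma v_r-v_r)=\sigma v-v=0$, and torsion-freeness of $V$ gives $\sigma v_r=v_r$; thus every $v_r$ lies in $V^{G_\F}$, so $v$ is already $\ell^\infty$-divisible inside $V^{G_\F}$. Since $V^{G_\F}$ is free of finite rank over $\Z_\ell$, we have $\bigcap_{r\geq 1}\ell^r V^{G_\F}=0$, hence $v=0$. This proves $V^{G_\F}\cap D=0$, hence the injectivity asserted in the lemma.

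I expect no serious obstacle here; the two points deserving care are that the surjectivity hypothesis is needed only to force $V^{G_\F}$ to be finitely generated --- without such an input $V^{G_\F}$ could a priori be $\ell$-divisible, so the map could fail to be injective --- and that torsion-freeness of $V$ is precisely what upgrades ``$\ell^\infty$-divisible in $V$'' to ``$\ell^\infty$-divisible in $V^{G_\F}$'', via its application to the differences $\sigma v_r-v_r$.
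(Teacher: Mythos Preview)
Your proof is correct and follows essentially the same approach as the paper: both arguments use torsion-freeness of $V$ to upgrade $\ell^\infty$-divisibility in $V$ to $\ell^\infty$-divisibility in $V^{G_\F}$, and then invoke the surjectivity hypothesis to deduce that $V^{G_\F}$ is finitely generated over $\Z_\ell$, whence it has no nonzero $\ell^\infty$-divisible elements. Your write-up is in fact slightly more explicit about the logical structure (isolating $V^{G_\F}\cap D=0$ as the goal and justifying torsion-freeness in-line rather than by back-reference), but the mathematics is the same.
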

\begin{proof}
Let $\alpha  \in H^{2i}(F_0\bar X,\Z_\ell(i))^{G_\F}$ be a class whose image in $H^{2i}(F_0\bar X,\Z_\ell(i))\hat \otimes \Z_\ell$ vanishes.
By the Bockstein sequence,  $\alpha$ is $\ell^{r}$-divisible for any $r\geq1$: $\alpha=\ell^r\beta_r$.
Since $H^{2i}(F_0\bar X,\Z_\ell(i))$ is torsion-free (see Corollary \ref{cor:torsion-free}), $\beta_r$ is $G_\F$-invariant for all $r$.
Hence, $\alpha$ is $\ell^r$-divisible in $H^{2i}(F_0\bar X,\Z_\ell(i))^{G_\F}$ for all $r$.
This implies $\alpha=0$, because  $H^{2i}(F_0\bar X,\Z_\ell(i))^{G_\F}$ is a finitely generated $\Z_\ell$-module by assumption.
\end{proof}

\begin{proposition} \label{prop:H^2-Gal invariant vs completion}
Let $X$ be a projective variety over a finite field $\F$.
Then
the natural map
\begin{align} \label{eq:map:prop:H^2-Gal invariant vs completion}
H^2(F_0\bar X,\Z_\ell(1))^{G_{\F}}\longrightarrow \left( H^2(F_0\bar X,\Z_\ell(1)) \hat \otimes \Z_\ell  \right)^{G_{\F}}
\end{align}
is an isomorphism.
Moreover, if the Tate conjecture holds for divisors on a prime to $\ell$ alteration $X'\to X$ of $X$,  then both groups in \eqref{eq:map:prop:H^2-Gal invariant vs completion}  vanish. 
\end{proposition}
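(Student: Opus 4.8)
The plan is to reduce everything to the case that $X$ is smooth projective, then prove the injectivity by an $\ell$-adic separatedness argument and the vanishing by feeding the Tate conjecture through Milne's Lemma \ref{lem:milne} and Lemmas \ref{lem:H^2/N^1}, \ref{lem:lifts invariant classes integrally}, \ref{lem:galois invariants completion}. For the reduction: by de Jong and Gabber \cite{deJong,IT} choose a prime to $\ell$ alteration $\tau\colon X'\to X$ with $X'$ smooth projective and integral. The groups $H^2(F_0\bar X,\Z_\ell(1))$ and $H^2(F_0\bar X',\Z_\ell(1))$ are the continuous \'etale cohomology of the generic points, i.e.\ of the finite extension $\bar\F(X')/\bar\F(X)$, and pullback $\tau^\ast$ together with the transfer $\tau_\ast$ satisfy $\tau_\ast\circ\tau^\ast=\deg(\tau)$, a unit in $\Z_\ell$. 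Hence $\tau^\ast$ is split injective, it is $G_\F$-equivariant (as $\tau$ is defined over $\F$) and compatible with $\hat\otimes\Z_\ell$, so it stays injective after taking $G_\F$-invariants and after $\hat\otimes\Z_\ell$; a diagram chase with \eqref{eq:map:prop:H^2-Gal invariant vs completion} for $X$ and $X'$ then reduces injectivity (resp.\ vanishing) for $X$ to the same statement for $X'$. So from now on $X$ is smooth projective (and, for the ``moreover'', satisfies the Tate conjecture for divisors).

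For the injectivity, arguing as in the proof of Lemma \ref{lem:galois invariants completion} it is enough to show that $H^2(F_0\bar X,\Z_\ell(1))^{G_\F}$ is $\ell$-adically separated: since $H^2(F_0\bar X,\Z_\ell(1))$ is torsion-free by Theorem \ref{thm:bloch-kato}, a class in the kernel of \eqref{eq:map:prop:H^2-Gal invariant vs completion} is $\ell^\infty$-divisible in $H^2(F_0\bar X,\Z_\ell(1))$, hence ---torsion-freeness forcing its divisors to be $G_\F$-invariant--- $\ell^\infty$-divisible in $H^2(F_0\bar X,\Z_\ell(1))^{G_\F}$, hence $0$. For the separatedness I would use the residue sequence
\[
0\longrightarrow H^2_{nr}(\bar X,\Z_\ell(1))\longrightarrow H^2(F_0\bar X,\Z_\ell(1))\xrightarrow{(\del_{\bar x})}\bigoplus_{\bar x\in\bar X^{(1)}}H^1(\kappa(\bar x),\Z_\ell(0)),
\]
exact by the definition of unramified cohomology; taking $G_\F$-invariants keeps it left exact. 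The submodule $H^2_{nr}(\bar X,\Z_\ell(1))^{G_\F}$ is finitely generated over $\Z_\ell$, because $H^2_{nr}(\bar X,\Z_\ell(1))=H^2(\bar X,\Z_\ell(1))/N^1$ is ($\bar X$ being smooth projective), hence it is $\ell$-adically separated. Each $H^1(\kappa(\bar x),\Z_\ell(0))$ is canonically $\kappa(\bar x)^\ast\hat\otimes\Z_\ell$ ---by Kummer theory, the identification $\Z_\ell(0)\cong\Z_\ell(1)$ of $G_{\kappa(\bar x)}$-modules (all roots of unity lie in $\bar\F\subset\kappa(\bar x)$), and surjectivity of the transition maps on $H^1(\cdot,\mu_{\ell^r})$--- so it is an $\ell$-adically complete, torsion-free $\Z_\ell$-module, and the same holds for the direct sum and for its $G_\F$-invariants. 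Since an extension of an $\ell$-adically separated torsion-free module by an $\ell$-adically separated module is $\ell$-adically separated (a two-step diagram chase on $\bigcap_r\ell^r(-)$), it follows that $H^2(F_0\bar X,\Z_\ell(1))^{G_\F}$ is separated, as needed.

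For the first group in the ``moreover'' (so $X$ smooth projective with the Tate conjecture for divisors): by Milne's Lemma \ref{lem:milne} the $1$-semi-simplicity conjecture holds in degree $1$, so Lemmas \ref{lem:H^2/N^1} and \ref{lem:lifts invariant classes integrally} (with $i=1$) give the surjectivity $H^2(\bar X,\Z_\ell(1))^{G_\F}\twoheadrightarrow H^2(F_0\bar X,\Z_\ell(1))^{G_\F}$. Every $G_\F$-invariant class in $H^2(F_0\bar X,\Z_\ell(1))$ is therefore the restriction of a class from $\bar X$, hence unramified, hence lies in $H^2_{nr}(\bar X,\Z_\ell(1))^{G_\F}$. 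Now $H^2_{nr}(\bar X,\Z_\ell(1))=H^2(\bar X,\Z_\ell(1))/N^1$ is finitely generated and torsion-free (by the Kummer sequence it is the Tate module of the geometric Brauer group), while $H^2_{nr}(\bar X,\Q_\ell(1))^{G_\F}=(H^2(\bar X,\Q_\ell(1))/N^1)^{G_\F}=0$: Tate together with $1$-semi-simplicity splits $H^2(\bar X,\Q_\ell(1))$, $\Frob_q$-equivariantly, as its $\Frob_q$-fixed part ---which is algebraic, i.e.\ contained in $N^1$, by Tate--- plus a complement on which $\Frob_q-\id$ is invertible, so the quotient $H^2(\bar X,\Q_\ell(1))/N^1$ has no $\Frob_q$-invariants. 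A finitely generated torsion-free $\Z_\ell$-module with this property has no $G_\F$-invariants, so $H^2_{nr}(\bar X,\Z_\ell(1))^{G_\F}=0$ and hence $H^2(F_0\bar X,\Z_\ell(1))^{G_\F}=0$.

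Finally, for the second group one must show $\bigl(H^2(F_0\bar X,\Z_\ell(1))\hat\otimes\Z_\ell\bigr)^{G_\F}=0$, after which \eqref{eq:map:prop:H^2-Gal invariant vs completion} is trivially an isomorphism of zero groups. Using the Bloch--Kato isomorphism of Theorem \ref{thm:bloch-kato} in degree $2$ and the twist $\Z_\ell(1)=\Z_\ell(2)\otimes\Z_\ell(-1)$, on which $\Frob_q$ acts by $q^{-1}$ relative to the symbol action, one obtains a $G_\F$-equivariant identification $H^2(F_0\bar X,\Z_\ell(1))\hat\otimes\Z_\ell\cong\bigl(K^M_2(\bar\F(X))\hat\otimes\Z_\ell\bigr)\otimes\Z_\ell(-1)$, whence $\bigl(H^2(F_0\bar X,\Z_\ell(1))\hat\otimes\Z_\ell\bigr)^{G_\F}$ is the kernel of $\Frob_q-q\cdot\id$ on $K^M_2(\bar\F(X))\hat\otimes\Z_\ell$; as this module is torsion-free (it is a Tate module), its vanishing is equivalent to $q$ not being an eigenvalue of $\Frob_q$ on $K^M_2(\bar\F(X))\hat\otimes\Q_\ell$. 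This last statement ---that the Tate conjecture for divisors forces $q$ not to occur as an eigenvalue on the completed second Milnor K-group--- is the codimension-one instance of the implication $\mathrm{Tate}\Rightarrow(\text{no top-weight eigenvalue})$, and I would deduce it from the results of \cite{Balkan-Sch}, tracking through the Gersten/Bloch--Ogus complex (in particular reciprocity along codimension-two points) how the Tate conjecture controls not only the ``geometric'' contribution $H^2_{nr}(\bar X,\cdot)$ but also all the ramified ones. This is the heart of the argument and the step I expect to be the hardest; everything else is formal.
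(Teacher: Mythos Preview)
Your reduction to the smooth projective case is standard and matches the paper. Your injectivity argument via $\ell$-adic separatedness of $H^2(F_0\bar X,\Z_\ell(1))^{G_\F}$ through the residue sequence is correct and is a genuinely different route from the paper's, which instead chains Lemmas~\ref{lem:H^2/N^1}, \ref{lem:lifts invariant classes integrally} and~\ref{lem:galois invariants completion} together; your version has the pleasant feature of not passing through the $1$-semi-simplicity hypothesis built into Lemma~\ref{lem:lifts invariant classes integrally}. (Small point: you only need each $H^1(\kappa(\bar x),\Z_\ell)$ to be $\ell$-adically \emph{separated}, which is automatic for any inverse limit of the shape $\lim_r M/\ell^r$; completeness is neither needed nor obviously true here.) Your argument for the vanishing of the left-hand group under Tate is also fine and is essentially the paper's, which simply outsources the computation $(H^2(\bar X,\Q_\ell(1))/N^1)^{G_\F}=0$ to \cite[Theorem~6.1(1)]{Balkan-Sch}.

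The gap is your last paragraph, which you yourself flag as the hard step. Your plan is to extract the implication ``Tate for divisors $\Rightarrow$ $q$ is not an eigenvalue on $K^M_2(\bar\F(X))\hat\otimes\Q_\ell$'' from \cite{Balkan-Sch}; but the results there concern only the \emph{uncompleted} group $H^2(F_0\bar X,\Q_\ell(1))$, and the passage to the completion is exactly the content of the present proposition --- indeed the implication you want is the later Theorem~\ref{thm:Tate-for-X-K_2-completed}, whose proof \emph{invokes} Proposition~\ref{prop:H^2-Gal invariant vs completion}. The paper's actual argument for this step is direct and, crucially, does not use the Tate conjecture at all: it shows that \eqref{eq:map:prop:H^2-Gal invariant vs completion} is \emph{surjective} for $X$ smooth projective. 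Given a $G_\F$-invariant $(\alpha_r)$ in the completion, one takes residues $(\partial_x\alpha_r)$ at each $x\in X^{(1)}$; a second residue at points $y\in D^{(1)}$ of a smooth projective $D$ with function field $\kappa(x)$ lands in the finitely generated weight-$2$ module $H^0(\bar y,\Z_\ell(-1))$ and hence vanishes, so each $\partial_x\alpha_r$ is unramified and lies in $H^1(\bar D,\Z_\ell)$, which has no nontorsion $G_\F$-invariants by the Weil conjectures; thus $\partial_x\alpha_r=0$ for all $x,r$, each $\alpha_r$ is unramified, and a Mittag--Leffler lift through $0\to\NS(\bar X)/\ell^r\to H^2(\bar X,\Z/\ell^r(1))\to H^2_{nr}(\bar X,\Z/\ell^r(1))\to 0$ produces a preimage in $H^2(\bar X,\Z_\ell(1))$ with $G_\F$-invariant restriction. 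Surjectivity plus the already-established vanishing of the source forces the target to vanish. Your sketch (``reciprocity along codimension-two points'') gestures in this direction, but the input you need is weight purity on $H^0$ and $H^1$ from the Weil conjectures, not the Tate conjecture or anything from \cite{Balkan-Sch}.
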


\begin{proof}
We first aim to prove that the natural map \eqref{eq:map:prop:H^2-Gal invariant vs completion} is an isomorphism.
Using Gabber's prime to $\ell$ alterations \cite{IT}, it is straightforward to reduce to the case where $X$ is smooth projective.
Injectivity of the map under consideration then follows from Lemmas \ref{lem:lifts invariant classes integrally-i=2} and  \ref{lem:galois invariants completion}.

To prove surjectivity, 
 let $(\alpha_r)\in  H^2(F_0\bar X,\Z_\ell(1))\hat \otimes \Z_\ell$ be $G_\F$-invariant.  
 For $x\in X^{(1)}$, consider the residue
 $$
( \partial_x \alpha_r)\in H^1(\bar x,\Z_\ell)\hat \otimes \Z_\ell,
 $$
 where $\bar x$ denotes the base change of $x$ to $\bar \F$ (which may split up into a Galois orbit of points). 
The above class is Galois-invariant and we aim to show that this actually forces the class to vanish.
Applying Gabber's prime to $\ell$ alterations \cite{IT} to the closure of $x$ in $X$, we can without loss of generality assume that there is a smooth projective $\F$-variety $D$ whose function field agrees with the residue field of $x$.
 We then find that 
 $$
(\partial_x \alpha_r)\in H^1(F_0 \bar D,\Z_\ell)\hat \otimes \Z_\ell
 $$ 
 is Galois-invariant.
 Let $y\in D^{(1)}$ be a codimension one point.
 Then
 $$
\partial_y (\partial_x \alpha_r)\in H^0( \bar y,\Z_\ell(-1))\hat \otimes \Z_\ell
 $$
 is Galois-invariant and hence vanishes because the above group is a finitely generated free $\Z_\ell$-module with a Frobenius action of weight $2$.
 It follows that each $  \partial_x \alpha_r$ is unramified on $\bar D$ and hence contained in
 $$
 H^1(\bar D,\Z/\ell^r)\subset H^1(\bar x,\Z/\ell^r).
 $$
 These classes form a projective system and so they give rise to a class
 $$
 (\partial_x \alpha_r)\in H^1(\bar D,\Z_\ell)
 $$
 that is Galois-invariant.
 Any such class is torsion by the Weil conjectures proven by Deligne, see \cite{deligne-weil}.
 Since torsion classes vanish on the generic point (see Theorem \ref{thm:bloch-kato}  but the case at hand is in fact easy, see \cite[Lemma 5.13]{Sch-refined}), we find that
 $$
   \partial_x \alpha_r=0\in H^1(\bar x,\Z/\ell^r)
 $$
 for all $r$ and all $x$.
 It follows that $\alpha_r\in H^2(F_0\bar X,\Z/\ell^r(1))$ is unramified for all $r$: 
\begin{align}\label{eq:alpha_r}
  \alpha_r\in H^2_{nr}(\bar X,\Z/\ell^r(1)).
\end{align}
 % and so it lifts to a class $\tilde \alpha_r\in H^2(\bar X,\Z/\ell^r(1))$.

We claim that the natural sequence
\begin{align}\label{eq:les-NS}
0\longrightarrow \NS(\bar X)\otimes \Z/\ell^r\stackrel{f}\longrightarrow H^2(\bar X,\Z/\ell^r(1))\longrightarrow H^2_{nr}(\bar X,\Z/\ell^r(1))\longrightarrow 0  
\end{align}
is exact, where $\NS(\bar X)$ denotes the N\'eron--Severi group of $\bar X$, and $f$ is the reduction modulo $\ell$ of the inclusion $\NS(\bar X)\otimes \Z_\ell \hookrightarrow H^2(\bar X,\Z_\ell(1))$.
Apart from the injectivity of $f$ in \eqref{eq:les-NS}, all claims follow from the Gysin (resp.\ localization) sequence, see e.g.\ \cite[Lemma 5.8 and Corollary 5.10]{Sch-refined}.
To prove injectivity of $f$,  note that by \cite[Lemma 5.8 and Corollary 5.10]{Sch-refined}, we have a natural exact sequence
$$
0\longrightarrow \NS(\bar X)\otimes \Z_\ell  \longrightarrow H^2(\bar X,\Z_\ell (1))\longrightarrow H^2 (F_0 \bar X,\Z_\ell(1)) .
$$
Since $H^2 (F_0 \bar X,\Z_\ell(1)) $ is torsion-free (see Theorem \ref{thm:bloch-kato} or \cite[Lemma 5.13]{Sch-refined}), it follows that the natural map
$$
 \NS(\bar X)\otimes \Z/\ell^r  \longrightarrow H^2(\bar X,\Z_\ell (1))/\ell^r
$$
is injective.
Since
$$
 H^2(\bar X,\Z_\ell (1))/\ell^r\longrightarrow  H^2(\bar X,\Z/\ell^r (1)) 
$$
is injective by the Bockstein sequence, we conclude that $f$ is injective, as claimed.

The classes $\alpha_r$ in \eqref{eq:alpha_r} form a projective system and hence yield a class in $\lim_r H^2_{nr}(\bar X,\Z/\ell^r(1))$.
Since $\NS(\bar X)\otimes \Z/\ell^r$ is a finite group, the Mittag--Leffler condition is satisfied and so
$$
R^1\lim_{\substack{\longleftarrow}} ( \NS(\bar X)\otimes \Z/\ell^r ) =0.
$$
It follows that $(\alpha_r)\in \lim_r H^2_{nr}(\bar X,\Z/\ell^r(1))$ lifts to a class 
$$
\tilde \alpha\in  H^2(\bar X,\Z_\ell(1))=\lim_{\substack{\longleftarrow\\ r}}  H^2(\bar X,\Z/\ell^r(1)).
$$ 
By construction, the image of this class in $H^2(F_0\bar X,\Z_\ell(1))$ is Galois-invariant modulo $\ell^r$ for all $r$.
Since 
$
H^2_{nr}(\bar X,\Z_\ell(1))$ is finitely generated as a $\Z_\ell$-module, we deduce that the image of $\tilde \alpha$ in
$ 
H^2_{nr}(\bar X,\Z_\ell(1)) \subset H^2(F_0\bar X,\Z_\ell(1))
$ 
is Galois-invariant. 
The reduction modulo $\ell^r$ of this class is nothing but $\alpha_r$ and so \eqref{eq:map:prop:H^2-Gal invariant vs completion} is surjective,  as we want.
This completes the proof of the first assertion of the proposition.
 
Let us now assume that the Tate conjecture holds for a prime to $\ell$ alteration $X'\to X$ of $X$. 
Replacing $X$ by $X'$ we can without loss of generality assume that $X$ is smooth projective and the Tate conjecture holds for divisors on $X$.
By Lemma \ref{lem:milne}, the same holds for the 1-semi-simplicity conjecture.
By \cite[Theorem 6.1(1)]{Balkan-Sch}, $ H^2(F_0\bar X,\Q_\ell(1))^{G_{\F}}=0$ and so $H^2(F_0\bar X,\Z_\ell(1))^{G_{\F}}=0$ by torsion-freeness  (this follows from Hilbert 90, see Theorem \ref{thm:bloch-kato} or \cite[Lemma 5.13]{Sch-refined}). 
This concludes the proof of the proposition. 
\end{proof}

\begin{theorem} \label{thm:Tate-for-X-K_2-completed}
Let $X$ be a smooth projective variety over a finite field $\F$ and let $\ell$ be a prime invertible in $\F$.
Then the following are equivalent:
\begin{enumerate}
\item The Tate conjecture holds for divisors on $X$.\label{item:thm:Tate-for-X-K_2-completed-1}
\item The $\Frob_{q}$-action on $K^M_2(\bar \F(X))\hat \otimes \Q_\ell$ does not have $q$ as an eigenvalue.\label{item:thm:Tate-for-X-K_2-completed-2}
\item We have \label{item:thm:Tate-for-X-K_2-completed-3}
$$
\left( H^2(F_0\bar X,\Z_\ell(1)) \hat \otimes \Q_\ell  \right)^{G_{\F}}=0 .
$$
\end{enumerate}
\end{theorem}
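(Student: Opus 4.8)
The plan is to derive the equivalence $(2)\Leftrightarrow(3)$ directly from the Bloch--Kato isomorphism, and the equivalence $(1)\Leftrightarrow(3)$ from Proposition \ref{prop:H^2-Gal invariant vs completion} together with the known reformulation of the Tate conjecture for divisors in terms of the vanishing of $H^2(F_0\bar X,\Q_\ell(1))^{G_\F}$.

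\emph{Step 1: $(2)\Leftrightarrow(3)$.} Apply Theorem \ref{thm:bloch-kato}(2) with $k'=\F$, $k=\bar \F$ to the variety $X$, obtaining a $G_\F$-equivariant isomorphism $K^M_2(\bar \F(X))\hat \otimes \Z_\ell\cong H^2(F_0\bar X,\Z_\ell(2))\hat \otimes \Z_\ell$, where $\Frob_q$ acts by coefficient-raising on the left and by the full Galois action (including the Tate twist) on the right. Since $\bar \F$ is algebraically closed, the Tate twist only changes the $G_\F$-action, so $H^2(F_0\bar X,\Z_\ell(2))\cong H^2(F_0\bar X,\Z_\ell(1))\otimes_{\Z_\ell}\Z_\ell(1)$ as $G_\F$-modules, and $\ell$-adic completion commutes with twisting by the free rank-one module $\Z_\ell(1)$. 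Tensoring with $\Q_\ell$ gives a $\Q_\ell[\Frob_q]$-isomorphism
\[
K^M_2(\bar \F(X))\hat \otimes \Q_\ell\;\cong\;\bigl(H^2(F_0\bar X,\Z_\ell(1))\hat \otimes \Q_\ell\bigr)(1).
\]
As $\Frob_q$ acts on $\Z_\ell(1)$ by multiplication by $q$, the operator $\Frob_q-q\cdot \id$ has nonzero kernel on the left if and only if $\Frob_q-\id$ has nonzero kernel on $H^2(F_0\bar X,\Z_\ell(1))\hat \otimes \Q_\ell$, i.e.\ if and only if $\bigl(H^2(F_0\bar X,\Z_\ell(1))\hat \otimes \Q_\ell\bigr)^{G_\F}\neq 0$ (using that $G_\F$ is topologically generated by $\Frob_q$). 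This is exactly the equivalence $(2)\Leftrightarrow(3)$.

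\emph{Step 2: $(1)\Leftrightarrow(3)$.} By Theorem \ref{thm:bloch-kato}(1) the group $H^2(F_0\bar X,\Z_\ell(1))$ is torsion-free, hence so is its $\ell$-adic completion; therefore $(3)$ is equivalent to $\bigl(H^2(F_0\bar X,\Z_\ell(1))\hat \otimes \Z_\ell\bigr)^{G_\F}=0$, and $H^2(F_0\bar X,\Z_\ell(1))^{G_\F}=0$ is equivalent to $H^2(F_0\bar X,\Q_\ell(1))^{G_\F}=0$. Now a smooth projective variety is a prime to $\ell$ alteration of itself, so if $(1)$ holds then Proposition \ref{prop:H^2-Gal invariant vs completion} applies with $X'=X$ and shows that both groups in \eqref{eq:map:prop:H^2-Gal invariant vs completion} vanish; in particular $\bigl(H^2(F_0\bar X,\Z_\ell(1))\hat \otimes \Z_\ell\bigr)^{G_\F}=0$, which is $(3)$. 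Conversely, if $(3)$ holds then $\bigl(H^2(F_0\bar X,\Z_\ell(1))\hat \otimes \Z_\ell\bigr)^{G_\F}=0$, and the injectivity of the map \eqref{eq:map:prop:H^2-Gal invariant vs completion} --- asserted by Proposition \ref{prop:H^2-Gal invariant vs completion} for every projective $X$ --- forces $H^2(F_0\bar X,\Z_\ell(1))^{G_\F}=0$, hence $H^2(F_0\bar X,\Q_\ell(1))^{G_\F}=0$. By Milne's Lemma \ref{lem:milne} and \cite[Theorem 6.1]{Balkan-Sch} this last vanishing is equivalent to the Tate conjecture for divisors on $X$ (one implication is recalled in the proof of Proposition \ref{prop:H^2-Gal invariant vs completion}; for the other one uses $H^2(\bar X,\Q_\ell(1))/N^1\hookrightarrow H^2(F_0\bar X,\Q_\ell(1))$, so the vanishing forces $H^2(\bar X,\Q_\ell(1))^{G_\F}\subseteq N^1H^2(\bar X,\Q_\ell(1))$, which in degree two is the span of the algebraic classes, all of which come from $\CH^1(X)$ after $\otimes \Q_\ell$ since $\Br(\F)=0$). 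Hence $(1)$ holds.

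The step I expect to require the most care is Step 1: one must keep straight that $K^M_2$ corresponds to degree-two \'etale cohomology with a twist by $2$, not by $1$; check that $\ell$-adic completion is compatible with Tate twists and with inverting $\ell$; and confirm that the eigenvalue $q$ on the Milnor K-side matches Galois invariance (eigenvalue $1$) on the cohomological side after dividing by the cyclotomic character. All of the genuinely hard content --- comparing the Galois invariants of the completed group $H^2(F_0\bar X,\Z_\ell(1))\hat \otimes \Z_\ell$ with those of $H^2(F_0\bar X,\Z_\ell(1))$ itself, and with the Tate conjecture --- has already been isolated in Proposition \ref{prop:H^2-Gal invariant vs completion} and the cited work of Balkan--Schreieder, so the theorem only has to assemble these pieces.
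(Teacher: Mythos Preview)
Your proof is correct and follows essentially the same route as the paper: Step~1 is exactly the paper's argument (Bloch--Kato plus a Tate twist), and Step~2 invokes Proposition~\ref{prop:H^2-Gal invariant vs completion} in both directions together with \cite[Theorem~6.1]{Balkan-Sch}, just as the paper does. The only cosmetic differences are that the paper cites \cite[Theorem~6.1(2)]{Balkan-Sch} directly for $(3)\Rightarrow(1)$ without your parenthetical $N^1$/$\Br(\F)=0$ explanation, and does not invoke Lemma~\ref{lem:milne} here (it is not needed for this implication).
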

\begin{proof}
There is a canonical Galois-equivariant isomorphism
$$
(H^2(F_0\bar X,\Z_\ell(1)) \hat \otimes \Z_\ell)\otimes_{\Z_\ell}\Z_\ell(1)  \cong H^2(F_0\bar X,\Z_\ell(2)) \hat \otimes \Z_\ell .
$$
By the Bloch--Kato conjecture in degree 2, proven by Merkurjev--Suslin  \cite{MS}, we also have a Galois-equivariant isomorphism
$$  H^2(F_0\bar X,\Z_\ell(2)) \hat \otimes \Z_\ell  \cong K^M_2(\bar \F(X))\hat \otimes \Z_\ell ,
$$  
see Theorem \ref{thm:bloch-kato}.
Altogether this proves the equivalence of \eqref{item:thm:Tate-for-X-K_2-completed-2} and \eqref{item:thm:Tate-for-X-K_2-completed-3}.

By Proposition \ref{prop:H^2-Gal invariant vs completion}, \eqref{item:thm:Tate-for-X-K_2-completed-3} implies 
$$
H^2(F_0\bar X,\Q_\ell(1))^{G_{\F}}=0 .
$$
This in turn implies item \eqref{item:thm:Tate-for-X-K_2-completed-1} by \cite[Theorem 6.1(2)]{Balkan-Sch}.
Conversely, item \eqref{item:thm:Tate-for-X-K_2-completed-1} implies \eqref{item:thm:Tate-for-X-K_2-completed-3} by  Proposition \ref{prop:H^2-Gal invariant vs completion}.
This completes the proof of the theorem.
\end{proof}

\begin{theorem}\label{thm:main:completion vs Tate for surfaces}
Let $\F$ be a finite field and let $\ell$ be a prime invertible in $\F$. 
Then the following are equivalent:
\begin{enumerate}
\item The Tate conjecture holds for smooth projective surfaces over $\F$.\label{item1:thm:main:completion vs Tate for surfaces}
\item The group $H^3(F_0 \CP_{\bar \F}^3,\Z_\ell(2)) $ has no nontrivial $G_{\F}$-invariant classes.\label{item1.5:thm:main:completion vs Tate for surfaces}
\item The group $H^3(F_0 \CP_{\bar \F}^3,\Z_\ell(2))\hat \otimes \Z_\ell$ has no nontrivial $G_{\F}$-invariant classes.\label{item2:thm:main:completion vs Tate for surfaces}
\end{enumerate}
\end{theorem}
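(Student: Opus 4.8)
The plan is to establish the three equivalences via the cycle $(1)\Rightarrow(3)\Rightarrow(2)\Rightarrow(1)$. The inputs are Theorem~\ref{thm:bloch-kato} (torsion-freeness and the mod-$\ell^r$ control it provides), Proposition~\ref{prop:H^2-Gal invariant vs completion} and Theorem~\ref{thm:Tate-for-X-K_2-completed} (applied to hypersurfaces in $\CP^3$ and their alterations), Morrow's theorem together with Weil restriction (so that $(1)$ is equivalent to the Tate conjecture for divisors on all smooth projective varieties over \emph{every} finite extension of $\F$), de~Jong--Gabber alterations, and two facts about $\CP^3$: it is rational, so $H^3_{nr}(\CP^3_{\bar\F},A(2))=0$ for $A(2)\in\{\mu_{\ell^r}^{\otimes 2},\Z_\ell(2)\}$; and its cohomology is of Tate type, so the coniveau (Gersten/Cousin) complex of $\CP^3_{\bar\F}$ with such coefficients is exact both at its generic-point term and at its codimension-one term $\bigoplus_{x\in(\CP^3)^{(1)}}H^2(\kappa(\bar x),A(1))$, i.e.\ $H^p_{\mathrm{Zar}}(\CP^3_{\bar\F},\mathcal H^3(A(2)))=0$ for $p\le 1$ (degeneration of the coniveau spectral sequence, or the projective bundle formula). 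The implication $(3)\Rightarrow(2)$ is the easiest: the natural map from $H^3(F_0\CP^3_{\bar\F},\Z_\ell(2))^{G_\F}$ to $(H^3(F_0\CP^3_{\bar\F},\Z_\ell(2))\hat\otimes\Z_\ell)^{G_\F}$ is injective, since a class $\alpha$ in its kernel is $\ell^\infty$-divisible and, by torsion-freeness of $H^3(F_0\CP^3_{\bar\F},\Z_\ell(2))$, can be written $\alpha=\ell^r\alpha_r$ with $\alpha_r$ again $G_\F$-invariant; each residue $\partial_{\bar x}\alpha$ is then $\ell^\infty$-divisible inside $H^2(\kappa(\bar x),\Z_\ell(1))^{G_\F}$, which injects into its $\ell$-adic completion by Proposition~\ref{prop:H^2-Gal invariant vs completion} (and Shapiro's lemma for the Galois orbit $\bar x$), so all residues of $\alpha$ vanish and $\alpha\in H^3_{nr}(\CP^3_{\bar\F},\Z_\ell(2))=0$.

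For $(1)\Rightarrow(3)$, take $\beta\in(H^3(F_0\CP^3_{\bar\F},\Z_\ell(2))\hat\otimes\Z_\ell)^{G_\F}$. For each $x\in(\CP^3_\F)^{(1)}$ the completed residue $\partial_{\bar x}\beta$ is a $G_\F$-invariant element of $H^2(\kappa(\bar x),\Z_\ell(1))\hat\otimes\Z_\ell$; writing $\bar x$ as a $G_\F$-orbit of points defined over a finite extension $\F'$, Shapiro's lemma identifies these invariants with $(H^2(F_0\overline{\{\bar x_1\}},\Z_\ell(1))\hat\otimes\Z_\ell)^{G_{\F'}}$ for one point $x_1$ of the orbit, and this vanishes by Proposition~\ref{prop:H^2-Gal invariant vs completion} applied over $\F'$ to a prime-to-$\ell$ alteration of the hypersurface $\overline{\{x_1\}}\subseteq\CP^3_{\F'}$ (using $(1)$, hence the Tate conjecture over $\F'$). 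So every residue of $\beta$ vanishes, and since the completed coniveau complex is still injective at the generic-point term — reduce $H^3_{nr}(\CP^3_{\bar\F},\mu_{\ell^r}^{\otimes 2})=0$ to get $H^3(F_0\CP^3_{\bar\F},\mu_{\ell^r}^{\otimes 2})\hookrightarrow\bigoplus_xH^2(\kappa(\bar x),\mu_{\ell^r})$, then pass to the limit and use that a completed direct sum injects into the product of the completed summands — we conclude $\beta=0$.

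For $(2)\Rightarrow(1)$, suppose the Tate conjecture fails on a smooth projective surface $S$ over $\F$. Over the infinite field $\bar\F$, $S_{\bar\F}$ is birational to a hypersurface in $\CP^3_{\bar\F}$, so after a finite extension $\F'/\F$ there is a hypersurface $V\subseteq\CP^3_{\F'}$ with $\F'(V)\cong\F'(S)$. The failure of the Tate conjecture passes from $S/\F$ to $S_{\F'}/\F'$, and then, being a birational invariant among surfaces (Theorem~\ref{thm:Tate-for-X-K_2-completed}) and invariant under prime-to-$\ell$ alterations, to a smooth projective model $\tilde V$ of $V$; by Theorem~\ref{thm:Tate-for-X-K_2-completed} together with a weight argument (residues of $G_{\F'}$-invariant classes on $\tilde V$ land in $H^1$ of curves, which carries no $G_{\F'}$-invariants) one obtains a nonzero class $\gamma\in H^2_{nr}(\bar{\tilde V},\Z_\ell(1))^{G_{\F'}}$, the group $H^2_{nr}(\bar{\tilde V},\Z_\ell(1))\subseteq H^2(F_0\bar{\tilde V},\Z_\ell(1))$ being torsion-free. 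Let $W_0\subseteq\CP^3_\F$ be the integral hypersurface whose base change is the union of the $G_\F$-conjugates of $\bar V$, with generic point $x_0$; by Shapiro's lemma $\gamma$ and its conjugates assemble to a nonzero $G_\F$-invariant class $\delta\in H^2(\kappa(\bar x_0),\Z_\ell(1))^{G_\F}$. Since $\gamma$ is unramified on $\tilde V$, all coniveau-complex residues of $\delta$ at codimension-two points of $\CP^3_{\bar\F}$ vanish — along a component of $\Sing(\bar W_0)$ one uses that the residue factors through the normalizations of the branches — so $\delta$ is a cocycle. Exactness of the coniveau complex of $\CP^3_{\bar\F}$ at the codimension-one term, together with injectivity at the generic-point term, lifts $\delta$ uniquely to a class $\alpha\in H^3(F_0\CP^3_{\bar\F},\Z_\ell(2))$; uniqueness of the lift forces $\alpha$ to be $G_\F$-invariant, and $\alpha\neq 0$ because $\partial_{\bar x_0}\alpha=\delta\neq 0$, contradicting $(2)$.

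The heart of the argument — and the part I expect to be most delicate to write down cleanly — is $(2)\Rightarrow(1)$. Its key structural ingredient is the exactness of the coniveau complex of $\CP^3_{\bar\F}$ in codimension $\le 1$, which is precisely what allows an unramified Galois-invariant class on a hypersurface to be lifted to the generic point of $\CP^3$. The geometrically fiddly point is the reduction of an \emph{arbitrary} surface to a hypersurface in $\CP^3$: this forces a general linear projection, which need not be defined over $\F$ itself, so one works over a finite extension $\F'$, must control the (generically ordinary) singularities of the projected surface when checking the cocycle condition, and then descends the Galois-invariant class from $\F'$ back to $\F$ through the $\F$-form of the Galois orbit of hypersurfaces. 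A further, more bookkeeping-type subtlety is that $\ell$-adic completion does not commute with the infinite direct sums in the coniveau complex, which is why in $(1)\Rightarrow(3)$ one argues at the finite levels mod $\ell^r$ before taking limits.
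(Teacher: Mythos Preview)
Your implications $(1)\Rightarrow(3)$ and $(3)\Rightarrow(2)$ match the paper's proof closely: both rely on the vanishing of $H^3_{nr}(\CP^3_{\bar\F},-)$ together with Proposition~\ref{prop:H^2-Gal invariant vs completion} applied to hypersurfaces, exactly as you outline. The paper indexes residues by points $x\in(\CP^3_\F)^{(1)}$ and applies Proposition~\ref{prop:H^2-Gal invariant vs completion} directly over $\F$ to the closure of $x$, so your detour through $\F'$ and Shapiro's lemma is unnecessary (Proposition~\ref{prop:H^2-Gal invariant vs completion} is stated for arbitrary projective $\F$-varieties, not geometrically integral ones), but harmless.

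The genuine difference is the equivalence $(1)\Leftrightarrow(2)$. The paper imports this wholesale from \cite[Corollary~7.6]{Balkan-Sch} and Lemma~\ref{lem:milne}, treating it as a black box. You instead give a direct construction for $(2)\Rightarrow(1)$: project a counterexample surface birationally to a hypersurface $V\subset\CP^3_{\F'}$, extract a nonzero Galois-invariant class in $H^2_{nr}$ of its resolution, check it is a cocycle in the Gersten complex of $\CP^3_{\bar\F}$, and lift via exactness at codimension $\le 1$. This is correct and more self-contained, but the price is several technical verifications you only sketch: (a) that the Gersten differential at a singular codimension-$2$ point factors through the normalization (a standard fact for Rost cycle modules, but worth a reference); (b) the computation $H^1_{\mathrm{Zar}}(\CP^3_{\bar\F},\mathcal H^3(\Z_\ell(2)))=0$, which you attribute to ``degeneration'' or the projective bundle formula---the cleanest route is the coniveau spectral sequence together with $\mathrm{gr}^1_N H^4(\CP^3_{\bar\F},\Z_\ell(2))=0$ and vanishing of the relevant $d_2,d_3$; and (c) resolution of singularities for surfaces (Lipman) rather than merely alterations, since you need $\tilde V$ to have the \emph{same} function field as $V$ for the cocycle check. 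Your invocation of Theorem~\ref{thm:Tate-for-X-K_2-completed} plus a ``weight argument'' to produce $\gamma$ is also more than needed: failure of Tate directly gives a nonzero $G_{\F'}$-invariant class in $H^2(\bar{\tilde V},\Q_\ell(1))/N^1=H^2_{nr}(\bar{\tilde V},\Q_\ell(1))$, which is already unramified.

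In short, your route unpacks what the paper outsources to \cite{Balkan-Sch}; it buys independence from that reference at the cost of writing out the Gersten-complex geometry explicitly.
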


\begin{proof}
By Theorem \ref{thm:bloch-kato}, $H^3(F_0 \CP_{\bar \F}^3,\Z_\ell(2)) $ is torsion-free and so this group has no Galois-invariant classes if and only if this holds for $H^3(F_0 \CP_{\bar \F}^3,\Q_\ell(2))$.
The equivalence of \eqref{item1:thm:main:completion vs Tate for surfaces} and \eqref{item1.5:thm:main:completion vs Tate for surfaces} follows therefore from Lemma \ref{lem:milne} and 
 \cite[Corollary 7.6]{Balkan-Sch}.

Next, assume that \eqref{item1:thm:main:completion vs Tate for surfaces} holds true and let
$$
(\alpha_r)\in H^3(F_0 \CP_{\bar \F}^3,\Z_\ell(2))\hat \otimes \Z_\ell
$$
be $G_{\F}$-invariant.
Assume that this class is nonzero.
Since $\CP^3_{\bar \F}$ is rational, $H^3(F_0 \CP_{\bar \F}^3,\Z/\ell^r(2))$ admits for $r>0$ no nontrivial unramified class.
We thus find that there is some codimension one point $x\in \CP^3 $, such that
$$
(\partial_x\alpha_r)\in  H^2(\bar x,\Z_\ell(1))\hat \otimes \Z_\ell
$$
is nonzero and $G_{\F}$-invariant.
This contradicts Proposition \ref{prop:H^2-Gal invariant vs completion} and so $( \alpha_r)$ was zero and \eqref{item2:thm:main:completion vs Tate for surfaces} holds true. 

Conversely,  assume that \eqref{item2:thm:main:completion vs Tate for surfaces} holds.
We aim to prove 
$ 
H^3(F_0 \CP_{\bar \F}^3,\Z_\ell(2))^{G_{\F}}=0.
$  
For a contradiction, assume that
$$
\alpha\in H^3(F_0 \CP_{\bar \F}^3,\Z_\ell(2))^{G_{\F}}
$$
is nonzero. 
Our assumption implies that $\alpha\in H^3(F_0 \CP_{\bar \F}^3,\Z_\ell(2))$ maps to zero in $H^3(F_0 \CP_{\bar \F}^3,\Z/\ell^r(2))$ for all $r$.
By the Bockstein sequence, $\alpha$ is $\ell^r$-divisible in  $H^3(F_0 \CP_{\bar \F}^3,\Z_\ell(2))$ for all $r$.
This implies that for any codimension one point $x\in \CP^3$, the residue
$$
\partial_x\alpha\in H^2(\bar x,\Z_\ell(1))
$$
is Galois-invariant and $\ell^r$-divisible for all $r$.
We aim to show that no such class exists.
Using prime to $\ell$ alterations (see \cite{IT}) we can without loss of generality assume that there is a smooth projective surface $D$ such that $\kappa(x)=\F(D)$.
Hence, 
$$
\partial_x\alpha\in H^2(F_0\bar D,\Z_\ell(1))
$$
is Galois-invariant and $\ell^r$-divisible for all $r$.
In particular, the natural map
$$
H^2(F_0\bar D,\Z_\ell(1))^{G_\F}\longrightarrow (H^2(F_0\bar D,\Z_\ell(1))\hat \otimes \Z_\ell )^{G_\F}
$$
has a nontrivial kernel. 
This contradicts the injectivity of \eqref{eq:map:prop:H^2-Gal invariant vs completion} in Proposition \ref{prop:H^2-Gal invariant vs completion}, which concludes the proof of the theorem.
\end{proof}

We are finally in the position to prove Theorem \ref{thm:main:Sq-Milnor-K-3}.

\begin{proof}[Proof of Theorem \ref{thm:main:Sq-Milnor-K-3}]
By \cite{morrow}, the Tate conjecture for divisors on smooth projective varieties over $\F=\F_q$ is equivalent to the analogous statement for smooth projective surfaces over $\F$. 
Hence, Theorem \ref{thm:main:Sq-Milnor-K-3} follows from Theorem \ref{thm:main:completion vs Tate for surfaces} together with the Galois-equivariant isomorphism
$$
H^3(F_0\CP^3_{\bar \F},\Z_\ell(3))\hat \otimes \Z_\ell \cong K^M_3(\bar \F(\CP^3))\hat \otimes \Z_\ell
$$
from Theorem \ref{thm:bloch-kato}.
\end{proof}

\section{$\ell$-adic completion of Milnor K-theory}

Recall from Section \ref{subsec:Milnor} our convention that Milnor K-theory of finite products of fields is the product of the Milnor K-theories of the individual fields.
In particular, if $X$ is a variety over $\F$ with base change $\bar X$, then for any codimension one point $x\in X^{(1)}$ in the smooth locus of $X$ there is a well-defined residue map $\del_{\bar x}:K^M_i(\bar \F(X))\to K^M_{i-1}(\kappa(\bar x)) $ (which takes into account that $\bar X$ as well as the base change $\bar x$ of the point $x$ may be reducible). 

\begin{lemma} \label{lem:del-x-1}
Let $X$ be a smooth projective variety over a finite field $\F$ and let $\ell$ be a prime invertible in $ \F$.
Let $\alpha\in K^M_i(\bar \F(X))$ be a class whose reduction modulo $\ell^r$ lies in the kernel of the residue map
$$
\del_{\bar x}:K^M_i(\bar \F(X))/\ell^r \longrightarrow K^M_{i-1}(\kappa(\bar x))/\ell^r
$$
for all $x\in X^{(1)}$ and all $r\geq 1$.
Assume that $i=1,2$ or that $ H_{nr}^i(\bar X,\mu_{\ell^r}^{\otimes i})=0$ for all $r\geq 1$.
% is surjective (this is automatic if $i\leq 2$ or $H_{nr}^i(X,\Z_\ell(i))=0$).
Then $\alpha$ is $\ell^\infty$-divisible.
\end{lemma}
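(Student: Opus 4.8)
\emph{Strategy.} The plan is to pass to \'etale cohomology via the Bloch--Kato isomorphism (Theorem~\ref{thm:bloch-kato}) and then to play off the \emph{weight} of the Frobenius action on the $\ell$-adic cohomology of $\bar X$ against the fact that $\Frob_q$ acts with finite orbits on Milnor K-theory. Writing $\bar\alpha_r\in K^M_i(\bar\F(X))/\ell^r$ for the reduction of $\alpha$, the class $\alpha$ is $\ell^\infty$-divisible if and only if its image in $K^M_i(\bar\F(X))\hat\otimes\Z_\ell=\varprojlim_r K^M_i(\bar\F(X))/\ell^r$ vanishes, i.e.\ if and only if $\bar\alpha_r=0$ for all $r$. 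By Theorem~\ref{thm:bloch-kato} one identifies $K^M_i(\bar\F(X))/\ell^r\cong H^i(F_0\bar X,\mu_{\ell^r}^{\otimes i})$ in a Galois-equivariant way compatible with residues; since every codimension one point of $\bar X$ lies in the Galois orbit $\bar x$ of some $x\in X^{(1)}$, the hypothesis is equivalent to saying that $\bar\alpha_r\in H^i_{nr}(\bar X,\mu_{\ell^r}^{\otimes i})$ for all $r$.

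\emph{The two easy cases.} If $i=1$, the residue $\partial_{\bar x}$ is the valuation at $\bar x$, so the hypothesis forces $v_{\bar x}(\alpha)\equiv 0\bmod\ell^r$ for all $r$, hence $v_{\bar x}(\alpha)=0$, for every codimension one point $\bar x$ of $\bar X$. Thus $\alpha$ is a global unit, lying in $H^0(\bar X,\OO_{\bar X}^\ast)$, which is a finite product of copies of $\bar\F^\ast$ and hence $\ell$-divisible because $\bar\F$ is algebraically closed; so $\alpha$ is $\ell^\infty$-divisible. If instead $H^i_{nr}(\bar X,\mu_{\ell^r}^{\otimes i})=0$ for all $r$, then $\bar\alpha_r=0$ for all $r$ and there is nothing left to prove.

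\emph{The case $i=2$.} This is the main case. Here $H^2_{nr}(\bar X,\mu_{\ell^r}^{\otimes 2})$ is, by purity, the image of the map $H^2(\bar X,\mu_{\ell^r}^{\otimes 2})\longrightarrow H^2(F_0\bar X,\mu_{\ell^r}^{\otimes 2})$, whose kernel $N^1H^2(\bar X,\mu_{\ell^r}^{\otimes 2})$ is finite. Taking the inverse limit over $r$, where the Mittag--Leffler condition holds because these kernels are finite, the natural map
$$
H^2(\bar X,\Z_\ell(2))=\varprojlim_r H^2(\bar X,\mu_{\ell^r}^{\otimes 2})\longrightarrow \varprojlim_r H^2_{nr}(\bar X,\mu_{\ell^r}^{\otimes 2})
$$
is surjective. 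Hence the compatible system $(\bar\alpha_r)_r$, which lies in the target, lifts to a class $\tilde\beta\in H^2(\bar X,\Z_\ell(2))$, and unwinding the identifications one checks that the image of $\alpha$ in $K^M_2(\bar\F(X))\hat\otimes\Z_\ell$ is the image of $\tilde\beta$ under the Galois-equivariant map
$$
H^2(\bar X,\Z_\ell(2))\longrightarrow H^2(F_0\bar X,\Z_\ell(2))\longrightarrow H^2(F_0\bar X,\Z_\ell(2))\hat\otimes\Z_\ell\cong K^M_2(\bar\F(X))\hat\otimes\Z_\ell.
$$
Now I would finish by a weight argument. Because $\bar\F$ contains all roots of unity, $H^2(F_0\bar X,\Z_\ell(2))\cong H^2(F_0\bar X,\Z_\ell(1))$ as abelian groups, and the latter is torsion-free by Theorem~\ref{thm:bloch-kato}; hence $K^M_2(\bar\F(X))\hat\otimes\Z_\ell$ is torsion-free and embeds into $K^M_2(\bar\F(X))\hat\otimes\Q_\ell$. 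Inside the latter, the image of $H^2(\bar X,\Q_\ell(2))$ is a finite-dimensional $\Q_\ell$-subspace which is a Galois quotient of $H^2(\bar X,\Q_\ell(2))$; by Deligne's theorem \cite{deligne-weil} that module is pure of weight $-2$, so $\Frob_q$ acts on this subspace with all eigenvalues of absolute value $q$, none of them a root of unity. On the other hand $\Frob_q$ acts with finite orbits on $K^M_2(\bar\F(X))$, so the image of $\alpha$ is fixed by some power $\Frob_q^n$; since $\Frob_q^n-\id$ is then invertible on the subspace, this image vanishes in $K^M_2(\bar\F(X))\hat\otimes\Q_\ell$, hence in $K^M_2(\bar\F(X))\hat\otimes\Z_\ell$, so $\alpha$ is $\ell^\infty$-divisible.

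\emph{The main obstacle.} I expect the delicate step to be producing the lift $\tilde\beta\in H^2(\bar X,\Z_\ell(2))$ of the compatible system of unramified reductions in the case $i=2$. This relies on the fact that for $i\leq 2$ unramified cohomology of $\bar X$ is a quotient of $H^i(\bar X,-)$ (equivalently $H^i_{nr}=H^i(\bar X,-)/N^1$), which fails for $i\geq 3$ — precisely the reason the hypothesis then has to be replaced by the vanishing of $H^i_{nr}$. Everything after the lift is the weight/finite-orbit dichotomy, which is robust.
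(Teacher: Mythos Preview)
Your proposal is correct and follows essentially the same route as the paper: translate the unramifiedness condition to \'etale cohomology via Bloch--Kato, use that for $i\le 2$ unramified cohomology is $H^i(\bar X,-)/N^1$, pass to the inverse limit (Mittag--Leffler via finiteness of $N^1$) to land in a finitely generated $\Z_\ell$-module controlled by $H^i(\bar X,\Z_\ell(i))$, and conclude by the weight/finite-orbit dichotomy. The only cosmetic differences are that you treat $i=1$ by the elementary global-units argument while the paper handles $i=1,2$ uniformly via weights, and that the paper identifies the limit directly as $H^i(\bar X,\Z_\ell(i))/N^1$ rather than lifting to $H^2(\bar X,\Z_\ell(2))$ before pushing into the completion.
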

\begin{proof}
Let $\alpha\in K^M_i(\bar \F(X))$ such that $\del_{\bar x}\alpha$ is zero mod $\ell^r$ for all $x\in X^{(1)}$ and  all $r\geq 1$.
Then the image 
$$
[\alpha]_r\in K^M_i(\bar \F(X))/\ell^r\cong H^i(F_0 \bar X,\mu_{\ell^r}^{\otimes i})
$$
of $\alpha$  
is unramified, where the above isomorphism follows from \cite{Voe}.
If $H_{nr}^i(\bar X,\mu_{\ell^r}^{\otimes i})=0$, then $\alpha$ is zero modulo $\ell^r$ for all $r\geq 1$ and hence $\alpha$ is $\ell^\infty$-divisible, as we want.

Let now $i=1,2$. 
Then
$$ 
[\alpha]_r\in H_{nr}^i(\bar X,\mu_{\ell^r}^{\otimes i})=H^i(\bar X,\mu_{\ell^r}^{\otimes i})/N^1.
$$
(Note that $N^1H^1(\bar X,\mu_{\ell^r}^{\otimes i})=0$.)
The classes $[\alpha]_r$ for different values of $r$ form a projective system and so we get an element in the inverse limit: 
\begin{align} \label{eq:alpha_r-modN^1}
([\alpha]_r)\in \lim_{\substack{\longleftarrow\\ r}} (H^i(\bar X,\mu_{\ell^r}^{\otimes i})/N^1) .
\end{align}

We claim that 
\begin{align}\label{eq:limN^1}
\lim_{\substack{\longleftarrow\\ r}} (H^i(\bar  X,\mu_{\ell^r}^{\otimes i})/N^1) \cong H^i(\bar X,\Z_\ell(i))/N^1H^i(\bar X,\Z_\ell(i))  .
\end{align}
To see this, consider the short exact sequence
$$
0\longrightarrow N^1  H^i(\bar X,\mu_{\ell^r}^{\otimes i}) \longrightarrow H^i(\bar X,\mu_{\ell^r}^{\otimes i})\longrightarrow H^i(\bar X,\mu_{\ell^r}^{\otimes i})/N^1\longrightarrow 0.
$$
Since $N^1H^i(\bar X,\mu_{\ell^r}^{\otimes i})\subset H^i(\bar X,\mu_{\ell^r}^{\otimes i})$ is finite, the Mittag--Leffler condition holds and we get
$$
\lim_{\substack{\longleftarrow\\ r}}  (H^i(\bar X,\mu_{\ell^r}^{\otimes i})/N^1)\cong  H^i(\bar X,\Z_\ell(i)) / \lim_{\substack{\longleftarrow\\ r}} N^1  H^i(\bar X,\mu_{\ell^r}^{\otimes i}) .
$$
Recall that $N^1  H^i(\bar X,\mu_{\ell^r}^{\otimes i})$ is zero for $i=1$ and it coincides with the subgroup of algebraic classes for $i=2$ (because $\bar X$ is defined over $\bar \F$ which contains all roots of unity and so the Tate twists can be ignored). 
This implies 
$$
\lim_{\substack{\longleftarrow\\ r}} N^1  H^i(\bar X,\mu_{\ell^r}^{\otimes i})=N^1H^i(\bar X,\Z_\ell(i)),
$$
which concludes the proof of \eqref{eq:limN^1}. 

Via the isomorphism \eqref{eq:limN^1}, \eqref{eq:alpha_r-modN^1} yields a class
$$
([\alpha]_r)\in  H^i(\bar X,\Z_\ell(i))/N^1H^i(\bar X,\Z_\ell(i)) . 
$$
On the other hand, $\alpha_r$ is obtained via reduction modulo $\ell^r$ of $\alpha\in K^M_i(\bar \F(X))$.
Since the Frobenius action on $ K^M_i(\bar \F(X))$ has finite orbits, it follows that $([\alpha]_r)$ is fixed by some power of $\Frob_q$. 
Hence the class vanishes, because $H^i(\bar X,\Z_\ell(i))/N^1$ is torsion-free (see Corollary \ref{cor:torsion-free}) and the corresponding $\Q_\ell$-Galois module has weight $i-2i\neq 0$ (see \cite{deligne-weil}) hence no nontrivial class is fixed by a power of the Frobenius action.
It follows that $\alpha$ is $\ell^\infty$-divisible, as we want. 
\end{proof}

\begin{theorem} \label{thm:K_n-completed-countable-free}
Let $X$ be a  variety over  a finite field $\F$ and let $\ell$ be a prime invertible in $\F$.
Let $n$ be a positive integer such that one of the following holds:
\begin{enumerate}
\item $n=2$;
\item $n=3$ and $H^3_{nr}(\bar X,\mu_{\ell^r}^{\otimes 3})=0$ for all $r$ (e.g.\ $\bar X$ is rational).
\end{enumerate}
Then there is a $G_\F$-module $M$, whose underlying $\Z$-module is free of countable rank, such that there is a $G_\F$-equivariant isomorphism between the $\ell$-adic completions:
$$
K^M_n(\bar \F(X))\hat \otimes \Z_\ell=M\hat \otimes \Z_\ell .
$$
Moreover, $M$ is a $G_\F$-submodule of a countable sum of permutation $G_\F$-modules.
\end{theorem}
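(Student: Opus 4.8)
The plan is to take $M$ to be the quotient of $K:=K^M_n(\bar\F(X))$ by its $G_\F$-submodule $D$ of $\ell^\infty$-divisible elements, i.e.\ $D=\bigcap_{r\ge1}\ell^rK=\ker(K\to K\hat\otimes\Z_\ell)$, and to embed $M=K/D$ into a countable sum of permutation $G_\F$-modules by means of an iterated residue map. Since $D\subset\ell^rK$ for every $r$, one has $(K/D)/\ell^r\cong K/\ell^rK$ for all $r$, hence a canonical $G_\F$-equivariant isomorphism $(K/D)\hat\otimes\Z_\ell\cong K\hat\otimes\Z_\ell$. It therefore suffices to produce a $G_\F$-equivariant homomorphism $\psi\colon K\to N$, with $N$ a countable sum of permutation $G_\F$-modules, such that $\ker\psi=D$: for then $M=K/D$ embeds into the free abelian group $N$, so $M$ is free (every subgroup of a free abelian group is free) of at most countable rank, since $K=\colim_mK^M_n(\F_{q^m}(X))$ is countable.

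I would build $\psi$ by induction on $n$, proving simultaneously the following auxiliary statement: for every variety $Y$ over a finite field $k$, writing $D_Y:=\bigcap_r\ell^rK^M_{n'}(\bar\F(Y))$, the quotient $K^M_{n'}(\bar\F(Y))/D_Y$ is free abelian of at most countable rank, embeds $G_k$-equivariantly into a countable sum of permutation $G_k$-modules, and has $\ell$-adic completion $K^M_{n'}(\bar\F(Y))\hat\otimes\Z_\ell$ — this for $n'\in\{1,2\}$ unconditionally, and for $n'=3$ under the hypothesis $H^3_{nr}(\bar Y,\mu_{\ell^r}^{\otimes3})=0$ for all $r$. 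The case $n'=0$ is trivial ($K^M_0=\Z$, $D_Y=0$). For the inductive step with $n'\in\{1,2\}$ I first reduce to the case that $Y$ is smooth projective: a prime-to-$\ell$ alteration of a projective model of $Y$ yields a smooth projective $k$-variety $Y'$ with $\bar\F(Y')\supset\bar\F(Y)$ finite of degree $d$ prime to $\ell$, and since the Milnor norm satisfies $N_{\bar\F(Y')/\bar\F(Y)}\circ\res=d$, a unit modulo every $\ell^r$, the induced map $K^M_{n'}(\bar\F(Y))/D_Y\to K^M_{n'}(\bar\F(Y'))/D_{Y'}$ is injective and transports the desired properties down from $Y'$; for $n'=3$ I keep $Y$ itself, which I take to be smooth projective (e.g.\ $Y=\CP^3$ when $\bar Y$ is rational), since the hypothesis on $H^3_{nr}$ is not inherited by alterations. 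With $Y$ smooth projective I then let $\psi$ send a class $\alpha$ to the tuple of its residues $\big(\partial_{\bar x}\alpha\bmod D_{\overline{\{\bar x\}}}\big)_{\bar x\in\bar Y^{(1)}}$ in $\bigoplus_{\bar x\in\bar Y^{(1)}}K^M_{n'-1}(\kappa(\bar x))/D_{\overline{\{\bar x\}}}$, and compose with the embedding of this group — furnished by the inductive hypothesis applied to the varieties $\overline{\{\bar x\}}$ — into a countable sum $N$ of permutation $G_k$-modules; here one uses that the points $\bar x$ lying over a fixed $x\in Y^{(1)}$ form a Galois orbit contributing the $G_k$-module induced from the submodule attached to one such point, which again is a submodule of a permutation module.

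It remains to identify $\ker\psi$ with $D_Y$. The inclusion $\ker\psi\supseteq D_Y$ is automatic: $N$ is free abelian, hence $\ell$-adically separated, so $\psi\big(\bigcap_r\ell^rK^M_{n'}(\bar\F(Y))\big)\subset\bigcap_r\ell^rN=0$. Conversely, if $\psi(\alpha)=0$ then every residue $\partial_{\bar x}\alpha$ is $\ell^\infty$-divisible in $K^M_{n'-1}(\kappa(\bar x))$, which means $\partial_{\bar x}([\alpha]_r)=0$ in $K^M_{n'-1}(\kappa(\bar x))/\ell^r$ for all $\bar x$ and all $r$; equivalently, the reduction $[\alpha]_r$ lies in the kernel of every residue map, for every $r$. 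By Lemma~\ref{lem:del-x-1} — applicable since $i=n'\in\{1,2\}$, or $i=3$ with the standing hypothesis $H^3_{nr}(\bar Y,\mu_{\ell^r}^{\otimes3})=0$ — this forces $\alpha$ to be $\ell^\infty$-divisible, i.e.\ $\alpha\in D_Y$. This closes the induction; taking $Y=X$ and $n'=n$ gives the theorem.

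The step I expect to be most delicate is keeping the reduction to the smooth projective case coherent throughout the induction: the auxiliary statement must be carried along for all the residue function fields that occur, the norm/transfer argument must respect the $G_k$-actions, and the $n=3$ hypothesis — available only for the given variety and destroyed by alterations — forces the alteration step to be confined to degrees $n'\le2$, where Lemma~\ref{lem:del-x-1} requires no unramifiedness input. The only genuinely non-formal ingredient is Lemma~\ref{lem:del-x-1} itself, whose proof in degree $i=2$ uses the Weil conjectures to rule out Frobenius-fixed classes of nonzero weight; granting it, the remainder is the bookkeeping of iterated residues and induced permutation modules.
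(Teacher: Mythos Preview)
Your approach is correct and essentially the same as the paper's: both build a $G_\F$-equivariant map from $K^M_n(\bar\F(X))$ into a countable direct sum of permutation $G_\F$-modules via iterated residues (passing through prime-to-$\ell$ alterations at each stage), identify the kernel with the $\ell^\infty$-divisible elements using Lemma~\ref{lem:del-x-1}, and take $M$ to be the image. The only difference is organizational --- the paper unrolls your induction by parametrizing the iterated residue paths as tuples $\vec x=(x_0,\dots,x_n)$ together with chosen projective models $C(x_i)$ and alterations $D(x_i)\to C(x_i)$, and maps directly into $\bigoplus_{\vec x} K^M_0(\kappa(\bar x_n))$; your inductive packaging is a clean alternative, and your care in confining the alteration step to degrees $n'\le 2$ (so that the $H^3_{nr}$ hypothesis is only invoked on $X$ itself) is well placed.
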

\begin{proof}
By the Bloch--Kato conjecture, proven by Voevodsky \cite{Voe}, we have a canonical isomorphism
$$
K^M_n(\bar \F(X))\hat \otimes \Z_\ell\stackrel{\cong}\longrightarrow \lim_{\substack{\longleftarrow\\ r}}  H^n(F_0\bar X,\mu_{\ell^r}^{\otimes n}) .
$$
Since affine varieties over $\bar \F$ have no cohomology in degrees larger than their dimensions, $H^n(F_0\bar X,\mu_{\ell^r}^{\otimes n})$ vanishes for  $n>\dim X$ and so does the above completion.
%It follows that the above group vanishes for $n>\dim X$.
We may thus from now on assume that $n\leq \dim X$. 

Let
$\mathcal S^{(n)} :=\mathcal S^{(n)}(X)$ be the set of all tuples (up to isomorphism)
$$
\vec{x}:= (x_0,x_1,x_2,\dots ,x_n) ,
$$ 
consisting of the following inductive data:
\begin{itemize}
\item $x_0\in X^{(0)}$ is the generic point of $X$; 
\item if $x_i$ is defined, we choose a 
projective variety $C(x_i)$ whose function field is $\kappa(x_i)$ together with a prime to $\ell$ alteration $\tau(x_i): D(x_i)\to C(x_i)$ (see \cite{IT});
\item  if $x_i$, $C(x_i)$, $D(x_i)$ and $\tau(x_i)$ are defined, then $x_{i+1}\in D(x_i)^{(1)}$ is a codimension one point. 
\end{itemize} 
Each tuple $\vec{x}$ consists of the data of $(x_i,C(x_i),D(x_i),\tau(x_i))$ with $i=1,\dots ,n$; we drop $C(x_i),D(x_i),\tau(x_i)$ from the notation for convenience.
We emphasize that $\mathcal S^{(n)}$ is the set of all such tuples up to isomorphism, where the notion of isomorphism is the obvious one.
In particular,  % all codimension one points $x_{i+1}\in D(x_i)^{(1)}$ appear.
all projective varieties $C(x_i)$ with function field $\kappa(x_i)$, all prime to $\ell$ alterations $\tau(x_i): D(x_i)\to C(x_i)$  and all codimension one points $x_{i+1}\in D(x_i)^{(1)}$ appear. 
For the arguments below to work it would be possible to choose one $C(x_i)$, $D(x_i)$, and $\tau(x_i)$ for each $x_i$, but it is important that all $x_{i+1}\in D(x_i)^{(1)}$ appear.
(We choose the above canonical definition as it yields compatibilities of the sets $\mathcal S^{(n)}(X)$ for different $X$, as used for instance in the proof of Lemma \ref{lem:del-x-2} below.)

Since $\bar \F$ is countable, so is the set $\mathcal S^{(n)}(X)$.

Note that each $x_i$ is defined over $\F$; its base change $\bar x_i$ to $\bar \F$ will be a finite Galois orbit of points.
For each $\vec{x}\in \mathcal S^{(n)}$ and each $i$, there is a natural residue map
$$
\del_{i}(\vec{x}):
K^M_j(\kappa(\bar x_i))\longrightarrow K^M_{j-1}(\kappa(\bar x_{i+1})) ,
$$
given by the composition
$$
K^M_j(\kappa(\bar x_i))=K^M_j(\bar \F(C(x_i))) \stackrel{\tau(x_{i})^\ast}\longrightarrow K^M_j(\bar \F(D(x_i))) \stackrel{\del_{\bar x_{i+1}}}\longrightarrow K^M_{j-1}(\kappa(\bar x_{i+1})) .
$$ 
For each $\vec{x}\in \mathcal S^{(n)}$, we may then define a map
\begin{align} \label{del-vec-x}
\del_{\vec{x}}:K^M_n(\bar \F(X))\longrightarrow K^M_{0}(\kappa(\bar x_{n}))
\end{align}
via the composition
$$
K^M_n(\bar \F(X))=K^M_{n}(\kappa(\bar x_0)) \stackrel{\del_{0}(\vec{x})}\longrightarrow K^M_{n-1}(\kappa(\bar x_1)) \stackrel{\del_{1}(\vec{x})}\longrightarrow K^M_{n-2}(\kappa(\bar x_2)) \stackrel{\del_{2}(\vec{x})}\longrightarrow \dots 
\stackrel{\del_{n-1}(\vec{x})}\longrightarrow K^M_{0}(\kappa(\bar x_{n}))  .
$$
Note that $K^M_{0}(\kappa(\bar x_{n})) $ is a free abelian group whose (finite) rank is given by the number of points in the scheme $\bar x_n$, i.e.\ the length of the Galois orbit over $\bar \F$ that corresponds to $x_n$.
The Galois action is induced by the natural action on this orbit and so  $K^M_{0}(\kappa(\bar x_{n})) $ is a permutation $G_\F$-module.

\begin{lemma} \label{lem:del-x-2} 
Let $n\leq 3$ be a natural number.
If $n=3$, we assume that $H^3_{nr}(\bar X,\mu_{\ell^r}^{\otimes 3})=0$ for all $r$.
Then the kernel of the map 
$$
\oplus_{\vec{x}\in \mathcal S^{(n)}}  \del_{\vec{x}}:K^M_n(\bar \F(X))\longrightarrow \bigoplus_{ \vec{x}\in \mathcal S^{(n)}} K^M_{0}(\kappa(\bar x_{n}))
$$
is $\ell$-divisible.  %, i.e.\ it is $\ell^\infty$-divisible after tensoring with $\Z_\ell$.
\end{lemma}
\begin{proof}
We recall that an abelian group $A$ is $\ell$-divisible if $A/\ell=0$.
In other words, $A$ is $\ell$-divisible if any element of $A$ is $\ell$-divisible (and hence in fact, any element is $\ell^\infty$-divisible).
%Since $A/\ell=(A\otimes \Z_\ell)/\ell$, it suffices to check $\ell$-divisibility after tensoring with $\Z_\ell$.

We prove the lemma by induction on $n$.
For convenience of notation, we denote the generic point of $D(x_i)$ by $x_i'$.
Then $\tau(x_i)$ induces a field extension $\kappa(x_i)\subset \kappa(x_i')$ whose degree is finite and coprime to $\ell$.
If we base change this to $\bar \F$, we get a finite morphism of zero-dimensional schemes $\Spec \kappa(\bar x_i')\to \Spec \kappa(\bar x_i)$ whose degree on the various connected components is constant and coprime to $\ell$.

For $n=1$, the statement boils down to showing that any an invertible regular function $\alpha\in \kappa(\bar x_0)^\ast$ on $\Spec \kappa(\bar x_0)$,  which lies in the kernel of
$$
\kappa(\bar x_0)^\ast\longrightarrow \kappa(\bar x'_0)^\ast \stackrel{\del_{\bar x_1}}\longrightarrow  K^M_{0}(\kappa(\bar x_{1}))
$$
for all $\vec x\in \mathcal S^{(1)}$,  is $\ell$-divisible. 
To prove this,  let $\alpha\in  \kappa(\bar x_0)^\ast$ be as above.
By Lemma \ref{lem:del-x-1}, the image of $\alpha$ in $\kappa(\bar x'_0)^\ast $ is $\ell$-divisible.
It then follows from a norm argument that $\alpha$ is $\ell$-divisible,  as we want, where we use that the alterations chosen above have degree coprime to $\ell$.

Let now $n>1$ and let $\alpha\in K^M_n(\bar \F(X))$ be in the kernel of $\del_{\vec x}$ for all $\vec x\in \mathcal S^{(n)}$.
Via a norm argument along the morphism $\tau(x_0):D(x_0)\to C(x_0)$, where $C(x_0)$ is birational to $X$, as above, we reduce to the case where $X$ is smooth projective.
For each $x\in X^{(1)}$, $\del_{\bar x}\alpha\in K^M_{n-1}(\kappa(\bar x))$ lies by construction in the kernel of $\del_{\vec{y}}$ for all $\vec{y}\in \mathcal S^{(n-1)}(\overline{\{x\}})$.  
We conclude by induction that $\del_{\bar x}\alpha$ is $\ell$-divisible for all $x\in X^{(1)}$. 
If either $n=2$, or $n=3$ and $H^3_{nr}(\bar X,\mu_{\ell^r}^{\otimes 3})=0$ for all $r$, then we conclude from Lemma \ref{lem:del-x-1} that $\alpha$ is $\ell$-divisible, as we want. 
This concludes the proof of the lemma. 
\end{proof}

Consider the $G_\F$-module
$$
N:= \bigoplus_{ \vec{x}\in \mathcal S^{(n)}} K^M_{0}(\kappa(\bar x_{n})),
$$
where $\mathcal S^{(n)}=\mathcal S^{(n)}(X)$.
Each $K^M_{0}(\kappa(\bar x_{n}))$ is a finite rank permutation $G_\F$-module.
Since $\mathcal S^{(n)}$ is countable, $N$ is a free $\Z$-module of countable rank, given as a countable direct sum of  permutation $G_\F$-modules.
We then let
$$
M:=\im \left( \oplus_{\vec{x}\in \mathcal S^{(n)} }  \del_{\vec{x}}:K^M_n(\bar \F(X))\longrightarrow  \bigoplus_{ \vec{x}\in \mathcal S^{(n)}} K^M_{0}(\kappa(\bar x_{n})) \right)\subset N ,
$$
which is a $G_\F$-submodule of $N$.
Since $N$ is a free $\Z$-module of countable rank, so is $M$.

By Lemma \ref{lem:del-x-2}, there is a short exact sequence
$$
0\longrightarrow D\longrightarrow K^M_n(\bar \F(X))\longrightarrow M\longrightarrow 0,
$$
where $D$ is $\ell$-divisible.
It follows that the above sequence induces $G_\F$-equivariant isomorphisms
$$
K^M_n(\bar \F(X))/\ell^r \cong  M/\ell^r
$$
for all $r$ and hence a $G_\F$-equivariant isomorphism
$$
K^M_n(\bar \F(X))\hat \otimes \Z_\ell \cong M\hat \otimes \Z_\ell.
$$
This concludes the proof of the theorem.
\end{proof}

\section{$\ell$-adic completions of tensor products}

Let $X$ be a geometrically irreducible variety over a finite field $\F=\F_q$.
By definition, there is a natural surjection 
$$
(\bar \F(X)^\ast)^{\otimes i}\twoheadrightarrow K^M_{i}(\bar \F(X)) .
$$
By \cite[\href{https://stacks.math.columbia.edu/tag/00M9}{Tag 00M9}]{stacks-project}, this induces a surjection on $\ell$-adic completions and hence we get a surjection
$$
((\bar \F(X)^\ast)^{\otimes i})\hat \otimes \Q_\ell \twoheadrightarrow K^M_{i}(\bar \F(X))\hat \otimes \Q_\ell.
$$
This surjection is equivariant with respect to the Frobenius action and so it is natural to wonder about the Frobenius eigenvalues on $((\bar \F(X)^\ast)^{\otimes i})\hat \otimes \Q_\ell $.

\begin{proposition}\label{prop:tensor product}
Let $X$ be a geometrically irreducible variety over a finite field $\F=\F_q$.
Assume that $\Pic \bar X$ is a free abelian group.
Then all eigenvalues of the $\Frob_q$-action on $(\bar \F(X)^\ast)^{\otimes i}\hat \otimes \Q_\ell$ are roots of unity.
\end{proposition}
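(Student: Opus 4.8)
The plan is to realize $(\bar\F(X)^\ast)^{\otimes i}\hat\otimes\Q_\ell$ as a $\Frob_q$-stable subspace of $Z^1(\bar X)^{\otimes i}\hat\otimes\Q_\ell$, where $Z^1(\bar X)=\bigoplus_{x\in\bar X^{(1)}}\Z$ is the group of codimension-one cycles. Grouping the codimension-one points of $\bar X$ into their finite Galois orbits exhibits $Z^1(\bar X)$, and — since a tensor product of finite permutation modules is again one — also $Z^1(\bar X)^{\otimes i}$, as a countable direct sum of finite permutation $G_\F$-modules; for such a module $N$, all $\Frob_q$-eigenvalues on $N\hat\otimes\Q_\ell$ are roots of unity. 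This is Lemma~\ref{lem:permutation}; in any case it reduces, after $G_\F$-equivariantly embedding the $\ell$-adic completion of a countable direct sum into the corresponding product, to the elementary fact that the eigenvalues of a permutation matrix are roots of unity. I would first reduce to the case that $X$ is smooth projective — the case relevant for the paper's application ($X=\CP^3$), and the one in which the divisor sequence takes its cleanest form; for general geometrically irreducible $X$ one must additionally treat the finitely generated group $\OO^\ast(\bar X)/\bar\F^\ast$ (which embeds, via the divisor map on a normal compactification, into a finite permutation $G_\F$-module) and a comparison of Picard with divisor class groups, but this is not needed in what follows. For $X$ smooth projective there is an exact sequence of $G_\F$-modules
$$0\longrightarrow \bar\F^\ast\longrightarrow \bar\F(X)^\ast\stackrel{\Div}\longrightarrow Z^1(\bar X)\longrightarrow \Pic\bar X\longrightarrow 0,$$
in which $\bar\F^\ast$ is the divisible group of prime-to-$p$ roots of unity and $\Pic\bar X$ is free by hypothesis.

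First I would eliminate $\bar\F^\ast$. Since $\bar\F^\ast$ is $\ell$-divisible we have $\bar\F^\ast/\ell^r=0$, so by right-exactness of $-\otimes\Z/\ell^r$ the projection onto $P:=\bar\F(X)^\ast/\bar\F^\ast$ induces $G_\F$-equivariant isomorphisms $\bar\F(X)^\ast/\ell^r\cong P/\ell^r$ for all $r$. Together with the natural isomorphism $(M^{\otimes_\Z i})\otimes_\Z\Z/\ell^r\cong(M/\ell^r)^{\otimes_{\Z/\ell^r}i}$, this gives $G_\F$-equivariant isomorphisms $(\bar\F(X)^\ast)^{\otimes i}/\ell^r\cong P^{\otimes i}/\ell^r$ and hence, passing to the inverse limit over $r$ and tensoring with $\Q_\ell$, a $\Q_\ell[G_\F]$-module isomorphism $(\bar\F(X)^\ast)^{\otimes i}\hat\otimes\Q_\ell\cong P^{\otimes i}\hat\otimes\Q_\ell$. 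It thus suffices to bound the $\Frob_q$-eigenvalues on $P^{\otimes i}\hat\otimes\Q_\ell$.

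Next, I would compare $P$ with $Z:=Z^1(\bar X)$. From the divisor sequence one gets a short exact sequence $0\to P\to Z\to\Pic\bar X\to 0$ of $G_\F$-modules in which $\Pic\bar X$ is free, hence a projective $\Z$-module; the sequence therefore splits over $\Z$ and $P$ is a $\Z$-module direct summand of $Z$. Taking $i$-th tensor powers, $P^{\otimes i}$ is a $\Z$-module direct summand of $Z^{\otimes i}$, the inclusion $P^{\otimes i}\hookrightarrow Z^{\otimes i}$ being the $i$-th tensor power of $P\hookrightarrow Z$ and hence $G_\F$-equivariant. Reducing modulo $\ell^r$ keeps this injection split; applying the left-exact inverse limit over $r$ and then tensoring with $\Q_\ell$ over $\Z_\ell$ yields a $G_\F$-equivariant injection $P^{\otimes i}\hat\otimes\Q_\ell\hookrightarrow Z^{\otimes i}\hat\otimes\Q_\ell$. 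As noted above, $Z^{\otimes i}$ is a countable direct sum of finite permutation $G_\F$-modules, so by Lemma~\ref{lem:permutation} every $\Frob_q$-eigenvalue on $Z^{\otimes i}\hat\otimes\Q_\ell$ is a root of unity; since $P^{\otimes i}\hat\otimes\Q_\ell$ is $\Frob_q$-stable inside it, the same holds there, and then, via the isomorphism of the previous step, for $(\bar\F(X)^\ast)^{\otimes i}\hat\otimes\Q_\ell$, as desired.

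The step I expect to be the main obstacle is conceptual rather than computational: neither $\ell$-adic completion nor the formation of $\Z$-linear tensor powers is exact, so the eigenvalue property of the ambient permutation module $Z^1(\bar X)$ does not automatically pass to the sub-$\Z$-module $\bar\F(X)^\ast$ (equivalently $P$) after completing and tensoring — an injective operator on an infinite-dimensional space may fail to remain injective on a quotient, and a subobject need not inherit good behaviour under completion. The hypothesis that $\Pic\bar X$ is free is exactly what rescues the argument: it makes $P$ a genuine $\Z$-module direct summand of $Z^1(\bar X)$, so that both operations behave well on it. The remaining point — reducing a general geometrically irreducible $X$ to the smooth projective case — is more technical; it involves the finitely generated group $\OO^\ast(\bar X)/\bar\F^\ast$ (handled by the divisor-map embedding into a finite permutation $G_\F$-module indicated above) together with a comparison of $\Pic\bar X$ with the Weil divisor class group of a normal model, and it is not needed for the paper's applications, where $X=\CP^3$.
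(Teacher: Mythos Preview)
Your argument is correct and follows the same skeleton as the paper's: eliminate $\bar\F^\ast$ by $\ell$-divisibility, use the divisor exact sequence with the freeness hypothesis on $\Pic\bar X$, and appeal to Lemma~\ref{lem:permutation} for countable sums of permutation modules. The one substantive difference is in how the freeness of $\Pic\bar X$ is exploited. The paper first replaces $\F$ by a finite extension so that a $\Z$-basis of $\Pic\bar X$ is represented by divisors defined over the ground field; then the surjection $Z^1(\bar X)\twoheadrightarrow\Pic\bar X$ admits a $G_\F$-equivariant section, and $P=\bar\F(X)^\ast/\bar\F^\ast$ is itself a direct sum of permutation $G_\F$-modules, to which Lemma~\ref{lem:permutation} applies directly. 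You instead stay over the original $\F$ and use only the $\Z$-linear splitting (guaranteed by projectivity of the free module $\Pic\bar X$) to obtain a $G_\F$-equivariant embedding $P^{\otimes i}\hat\otimes\Q_\ell\hookrightarrow Z^1(\bar X)^{\otimes i}\hat\otimes\Q_\ell$, applying the lemma to the ambient module. Your route avoids the base-change step; the paper's route yields the slightly stronger intermediate statement that, after a finite extension, $P$ itself decomposes as a sum of permutation modules.
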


\begin{proof}
Since $\Frob_{q^m}=\Frob_q^m$, it suffices to prove the statement up to replacing $\F=\F_q$ by a finite extension.
In particular, we may assume that $\Frob_q$ acts trivially on the free abelian group $\Pic \bar X$.
There is a natural Galois-equivariant exact sequence
$$
0\longrightarrow \bar \F(X)^\ast/\bar \F^\ast\stackrel{\del} \longrightarrow \bigoplus_{y\in \bar X^{(1)}} [y]\Z\longrightarrow \Pic \bar X\longrightarrow 0.
$$
The group $\bigoplus_{y\in \bar X^{(1)}} [y]\Z$ is a direct sum of permutation modules given by the Galois orbits of codimension one points on $\bar X$.
Up to replacing $\F$ by a finite extension, we may assume that a basis of $\Pic \bar X$ can be represented by a linear combination of irreducible subvarieties that are all defined over $\F$.
%the above sequence admits a Galois equivariant splitting.
Under this assumption the surjection in the above short exact sequence admits a $G_\F$-equivariant section and so the sequence splits as a sequence of $G_\F$-modules.
It follows that $\bar \F(X)^\ast/\bar \F^\ast$, viewed as a Galois module, is a direct sum of permutation modules.
Hence the same holds for the tensor product
$$
(\bar \F(X)^\ast/\bar \F^\ast)^{\otimes i}.
$$
Since $\bar \F$ is algebraically closed,  $\bar \F^\ast$ is a divisible group (in fact a divisible torsion group because $\F$ is finite).
It follows that the natural map
$$
(\bar \F(X)^\ast)^{\otimes i}\hat \otimes R \longrightarrow
(\bar \F(X)^\ast/\bar \F^\ast)^{\otimes i}\hat \otimes R
$$
is an isomorphism for $R\in \{\Z_\ell,\Q_\ell\}$ and so the proposition follows from Lemma \ref{lem:permutation} below. 
\end{proof}

%Let $G:=\hat \Z$.
%We say that a $G$-module $P$ is a permutation $G$-module if $P=\Z^{\oplus n}$ is a free $\Z$-module of finite rank $n$, the action of $G$ is via the quotient $G\to \Z/n$ and $\Z/n$ acts via cyclic permutation of a basis of $P$.

\begin{lemma} \label{lem:permutation}
Let $G=\hat \Z$ and let $N$ be a $G$-module.
Assume that $N\cong \oplus_{i\in I} P_i$ decomposes into a direct sum of permutation $G$-modules $P_i$,.
Then any eigenvalue of the action
$$
F:N\hat \otimes \Q_\ell\longrightarrow N\hat \otimes \Q_\ell
$$
induced by the element $1\in G$ is a root of unity.
\end{lemma}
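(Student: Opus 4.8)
The plan is to reduce the statement about the $\ell$-adic completion of the infinite direct sum $N=\bigoplus_{i\in I}P_i$ to the elementary behaviour of a single permutation summand, by realizing $N\hat\otimes\ql$ as an $F$-stable subspace of the \emph{product} $\prod_{i\in I}(P_i\hat\otimes\ql)$ of the completed summands.

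First I would record the single-summand case. Fix $i\in I$: then $P_i\cong\Z^{n_i}$ for a finite $n_i$, and the $G$-action permutes a basis of $P_i$ through a finite quotient $\Z/m_i$ of $G$, so $F$ acts on $P_i$ by a permutation matrix $\sigma_i$ with $\sigma_i^{m_i}=\id$. Completing $\ell$-adically yields $P_i\hat\otimes\zl\cong\zl^{n_i}$ with $F$ still given by $\sigma_i$, hence $F^{m_i}=\id$ on $P_i\hat\otimes\ql$ and on any of its base changes to a field extension of $\ql$; so every eigenvalue of $F$ there is an $m_i$-th root of unity. The point to keep in mind is that the orders $m_i$ need not be bounded over $i\in I$, so there is no single polynomial annihilating $F$ on all of $N\hat\otimes\ql$ --- this is the one real obstacle, and the product embedding below is exactly what circumvents it.

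Next I would construct the embedding. For each $i$ the projection $N\to P_i$ is $G$-equivariant, hence so is the induced map on $\ell$-adic completions; assembling these over $i\in I$ gives an $F$-equivariant map $\iota\colon N\hat\otimes\zl\to\prod_{i\in I}(P_i\hat\otimes\zl)$. Since $\ell$-adic completion is the inverse limit over $r$ of the quotients $M/\ell^rM$, since $\bigoplus_i P_i/\ell^r\hookrightarrow\prod_i P_i/\ell^r$, and since inverse limits are left exact and commute with products, $\iota$ is injective. As $\ql$ is flat over $\zl$ and each $P_i\hat\otimes\zl$ is finite free, tensoring by $\ql$ preserves this injectivity, and the natural map $(\prod_i P_i\hat\otimes\zl)\otimes_{\zl}\ql\to\prod_i(P_i\hat\otimes\ql)$ is injective (one checks this on denominators: an element $\ell^{-k}a$ with $a\in\prod_i\zl^{n_i}$ dies on the right iff each $a_i$ is torsion, i.e.\ $a=0$). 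Hence I obtain an injective $F$-equivariant $\ql$-linear map
\[
\iota_{\ql}\colon N\hat\otimes\ql\;\hookrightarrow\;\prod_{i\in I}(P_i\hat\otimes\ql).
\]

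Finally I would conclude. An eigenvalue $\lambda$ (say in $\overline{\ql}$) lies in a finite extension $E/\ql$, and there is a nonzero $v\in(N\hat\otimes\ql)\otimes_{\ql}E$ with $Fv=\lambda v$. Since $E$ is finite, hence free, over $\ql$, tensoring $\iota_\ql$ by $E$ over $\ql$ keeps it injective and maps into $\prod_{i\in I}\bigl((P_i\hat\otimes\ql)\otimes_\ql E\bigr)$; so $\iota_\ql(v)=(v_i)_{i\in I}$ has a nonzero component $v_j$, and $F$-equivariance forces $\sigma_j v_j=\lambda v_j$. Thus $\lambda$ is an eigenvalue of the finite-order matrix $\sigma_j$, i.e.\ a root of unity, as claimed. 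The only mildly technical points will be the commutations of completion and of finite base change with infinite products, which are purely formal; the conceptual content sits in the first two paragraphs.
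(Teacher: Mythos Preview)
Your proposal is correct and follows essentially the same route as the paper: both arguments construct the $F$-equivariant embedding $N\hat\otimes\Z_\ell\hookrightarrow\prod_{i\in I}P_i\otimes\Z_\ell$ and then reduce to the finite-order action on a single summand. Your version is in fact more carefully written than the paper's, which leaves the passage to $\Q_\ell$ and the meaning of ``eigenvalue on the product'' implicit; your explicit handling of the map $(\prod_i P_i\hat\otimes\Z_\ell)\otimes\Q_\ell\to\prod_i(P_i\hat\otimes\Q_\ell)$ and of base change to a finite extension $E/\Q_\ell$ fills in exactly those details.
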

\begin{proof}
An element in $(\oplus_{i\in I} P_i)\hat \otimes \Z_\ell $ is given by an inverse system $(\sum_i \alpha_{i,r})_r$, where $\alpha_{i,r}\in P_i/\ell^r$ and for each $r$, all but finitely many $\alpha_{i,r}$ are zero.
We thus get a natural map
$$
\left(\bigoplus_{i\in I} P_i \right)\hat \otimes \Z_\ell \hookrightarrow \prod_{i\in I} P_i\hat \otimes \Z_\ell,\ \ \ (\sum_i \alpha_{i,r})_r\mapsto ((\alpha_{i,r})_r)_i. 
$$
Since each $P_i$ is finitely generated, we have $P_i\hat \otimes \Z_\ell=P_i \otimes \Z_\ell$.
It follows that the above map is injective, because $(\alpha_{i,r})_r=0$ for all $i$ implies that $\alpha_{i,r}=0$ for all $i,r$.
Altogether we have established a $G$-equivariant embedding
$$
\left(\bigoplus_{i\in I} P_i \right)\hat \otimes \Z_\ell \hookrightarrow \prod_{i\in I} P_i \otimes \Z_\ell .
$$
The eigenvalues of the natural action of $F$ on the right hand side are the eigenvalues of the action on $P_i \otimes \Z_\ell $, which are roots of unity because $P_i$ is a permutation module.
This concludes the proof of the lemma.
\end{proof}

\section{Weights for completed Milnor K-theory}
Let  $Y$ be a projective variety over a finite field $\F=\F_q$ with $q$ elements and let $\Frob_q$ denote the arithmetic Frobenius relative to $\F$.
We say that $\lambda\in Z_\ell$ is an eigenvalue of the $\Frob_q$-action on $ K^M_3(\bar \F(Y))\hat \otimes \Q_\ell$ if there is a nonzero element 
$$
\alpha\in K^M_3(\bar \F(Y))\hat \otimes \Q_\ell 
$$
with $\Frob_q(\alpha)=\lambda \alpha$.
We say that $\lambda$ has absolute value $v\in \R$ if for all embeddings $\bar \Q_\ell \hookrightarrow \C$, the absolute value of $\lambda$ is $v$.

\begin{proposition} \label{prop:weights-for-completed-K-theory}
Let  $Y$ be a projective variety over a finite field $\F=\F_q$ with $q$ elements and let $\Frob_q$ denote the arithmetic Frobenius relative to $\F$.
Let $\ell$ be a prime invertible in $\F$ and assume that $H_{nr}^3(\bar Y,\mu_{\ell^r}^{\otimes 3})=0$ for all $r\geq 1$. 
Then any eigenvalue $\lambda\in \Z_\ell$ of the  $\Frob_q$-action on $K^M_3(\bar \F(Y))\hat \otimes \Q_\ell $ has absolute value $|\lambda|\in \{1,q^{1/2},q\} $. 
\end{proposition}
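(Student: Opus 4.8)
The plan is to descend from $K^M_3$ down to $K^M_0$ by repeatedly taking residues and reading off the possible absolute values of $\lambda$ at each level. Since by Theorem~\ref{thm:K_n-completed-countable-free} the group $K^M_3(\bar\F(Y))\hat\otimes\Z_\ell$ is the $\ell$-adic completion of a free $\Z$-module, it is torsion-free (as are all the completed groups that appear below), so a given $\Z_\ell$-eigenvalue $\lambda$ of $\Frob_q$ on $K^M_3(\bar\F(Y))\hat\otimes\Q_\ell$ is realised by a nonzero eigenvector already in $K^M_3(\bar\F(Y))\hat\otimes\Z_\ell$. Because $H^3_{nr}(\bar Y,\mu_{\ell^r}^{\otimes 3})=0$, the residue map $K^M_3(\bar\F(Y))/\ell^r\to\bigoplus_x K^M_2(\kappa(\bar x))/\ell^r$ (over codimension-one points $x$, of $Y$ or of suitable prime to $\ell$ alterations as in the proof of Theorem~\ref{thm:K_n-completed-countable-free}) has kernel $H^3_{nr}(\bar Y,\mu_{\ell^r}^{\otimes 3})=0$ by Lemma~\ref{lem:del-x-1} (case $i=3$) and Bloch--Kato, hence is injective; passing to the inverse limit over $r$ gives a $\Frob_q$-equivariant injection
\[
K^M_3(\bar\F(Y))\hat\otimes\Z_\ell\hookrightarrow\Big(\bigoplus_x K^M_2(\kappa(\bar x))\Big)\hat\otimes\Z_\ell .
\]

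The basic tool is then a ``descent to one summand'': for a family $(P_i)_{i\in I}$ of $G_\F$-modules on which $\Frob_q$ acts within each $P_i$ without permuting $I$, the natural map $(\bigoplus_iP_i)\hat\otimes\Z_\ell=\varprojlim_r\bigoplus_iP_i/\ell^r\hookrightarrow\prod_i(P_i\hat\otimes\Z_\ell)$ is an injective $\Frob_q$-equivariant homomorphism, so---crucially using $\lambda\in\Z_\ell$---a nonzero $\lambda$-eigenvector on the left has a nonzero component that is a $\lambda$-eigenvector on some $P_{i_0}\hat\otimes\Z_\ell$. Applied to our eigenvector, $\lambda$ becomes a $\Z_\ell$-eigenvalue of $\Frob_q$ on $K^M_2(\bar\F(Z))\hat\otimes\Q_\ell$ for some variety $Z$ over $\F$, which after a prime to $\ell$ alteration (a retract argument makes this legitimate) we may take smooth projective. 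Now the residue map $K^M_2(\bar\F(Z))/\ell^r\to\bigoplus_y K^M_1(\kappa(\bar y))/\ell^r$ has kernel the unramified subgroup $H^2_{nr}(\bar Z,\mu_{\ell^r}^{\otimes 2})=H^2(\bar Z,\mu_{\ell^r}^{\otimes 2})/N^1$ (Lemma~\ref{lem:del-x-1}, case $i=2$), with inverse limit $H^2(\bar Z,\Z_\ell(2))/N^1$ by \eqref{eq:limN^1}. Hence either the eigenvector maps to a nonzero residue and, by the descent principle, $\lambda$ is a $\Z_\ell$-eigenvalue of $\Frob_q$ on $K^M_1(\bar\F(W))\hat\otimes\Q_\ell$ for some variety $W$ over $\F$; or it lies in $H^2(\bar Z,\Z_\ell(2))/N^1$, a quotient of $H^2(\bar Z,\Q_\ell(2))$, and since $\bar Z$ is smooth projective $H^2(\bar Z,\Q_\ell)$ is pure of weight $2$ by Deligne \cite{deligne-weil}, so $\Frob_q$ on $H^2(\bar Z,\Q_\ell(2))$ has eigenvalues of absolute value $q$, forcing $|\lambda|=q$.

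It remains to bound $\Z_\ell$-eigenvalues of $\Frob_q$ on $K^M_1(\bar\F(W))\hat\otimes\Q_\ell$. Since $\bar\F^\ast$ is divisible we have, as in the proof of Proposition~\ref{prop:tensor product}, $K^M_1(\bar\F(W))\hat\otimes\Z_\ell=\bar\F(W)^\ast\hat\otimes\Z_\ell\cong(\bar\F(W)^\ast/\bar\F^\ast)\hat\otimes\Z_\ell$, and---taking $W$ smooth projective after a prime to $\ell$ alteration---the Galois-equivariant sequence $0\to\bar\F(W)^\ast/\bar\F^\ast\to\operatorname{Div}(\bar W)\to\Pic(\bar W)\to 0$ shows that the divisor map has kernel $\varprojlim_r\Pic(\bar W)[\ell^r]=T_\ell\Pic^0_W$ on $\ell$-adic completions (the finitely generated group $\NS(\bar W)$ contributing nothing in the limit). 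So the eigenvector either lies in $T_\ell\Pic^0_W$---on which $\Frob_q$ acts as the geometric Frobenius endomorphism of the abelian variety $\Pic^0_W$, with eigenvalues of absolute value $q^{1/2}$ by the Riemann hypothesis for abelian varieties---giving $|\lambda|=q^{1/2}$; or it maps to a nonzero $\lambda$-eigenvector on $\operatorname{Div}(\bar W)\hat\otimes\Q_\ell$, the $\ell$-adic completion of a countable direct sum of permutation $G_\F$-modules, all of whose $\Frob_q$-eigenvalues are roots of unity by Lemma~\ref{lem:permutation}, giving $|\lambda|=1$. Combining the three cases, $|\lambda|\in\{1,q^{1/2},q\}$, as claimed.

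The main obstacle is the interaction between $\ell$-adic completion and infinite direct sums: one cannot simply quote Theorem~\ref{thm:K_n-completed-countable-free} together with Lemma~\ref{lem:permutation}, as that would wrongly force every eigenvalue to be a root of unity---the values $q^{1/2}$ and $q$ genuinely hide in the $\Z_\ell$-module kernels $T_\ell\Pic^0$ and $H^2(-,\Z_\ell(2))/N^1$ that surface upon completing the residue sequences at the intermediate Milnor degrees, and do not survive into the associated countable sum of permutation modules. This also pinpoints the role of the hypothesis $H^3_{nr}(\bar Y,\mu_{\ell^r}^{\otimes 3})=0$: it is precisely what removes the analogous would-be weight-$q^{3/2}$ leftover at the top of the tower. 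A recurring technical chore, handled by the same norm/transfer arguments used throughout the paper, is tracking the prime to $\ell$ alterations needed to reduce to smooth projective varieties (for Deligne's purity theorem and for $\Pic^0$) while keeping residues and the $\Frob_q$-action compatible.
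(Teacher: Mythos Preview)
Your proof is correct and follows essentially the same strategy as the paper's: descend via residues from degree $3$ down to degree $0$, and at each level identify the kernel (the unramified part) with a finitely generated piece of \'etale cohomology of a smooth projective variety whose Frobenius weights are pinned down by Deligne. The differences are purely organizational---you proceed by a direct case split whereas the paper argues by contradiction, and at the $K^M_1$ level you package the dichotomy via the divisor/Picard sequence with $T_\ell\Pic^0$ where the paper instead takes one further residue to $H^0$ and then reads off $H^1_{nr}=H^1$; these amount to the same information since $H^1(\bar D,\Z_\ell(1))\cong T_\ell\Pic^0(\bar D)$ for smooth projective $D$.
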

\begin{proof}
By Theorem \ref{thm:bloch-kato}, there is a canonical Galois-equivariant isomorphism
\begin{align} \label{eq:Sq-Galois-equivariant-iso}
K^M_3(\bar \F(Y))\hat \otimes \Z_\ell\cong H^3(F_0\bar Y,\Z_\ell(3))\hat \otimes \Z_\ell .
\end{align} 
Let $\alpha=(\alpha_r)\in  H^3(F_0\bar Y,\Z_\ell(3))\hat \otimes \Z_\ell$ be non-torsion and assume that $\Frob_q^\ast \alpha=\lambda \alpha$ for some $\lambda\in \Z_\ell$ with $$
|\lambda|\notin  \{1,q^{1/2},q\}.
$$
Since $H_{nr}^3(\bar Y,\mu_{\ell^r}^{\otimes 3})=0$ for all $r\geq 1$, there must be a codimension one point $x\in Y^{(1)}$, such that the residue
$$
(\partial_x\alpha_r)\in H^2(\bar x,\Z_\ell(2))\hat \otimes \Z_\ell
$$
is nonzero, hence a $\lambda$-eigenvector for the $\Frob_q^\ast$-action.
Using prime to $\ell$ alterations, we can without loss of generality assume that there is a smooth projective $\F$-variety $X$ with $\kappa(x)=\bar \F(X)$.
We then get a nonzero class $\beta=(\partial_x\alpha_r)\in H^2(\bar \F(X),\Z_\ell(2))\hat \otimes \Z_\ell$ with $\Frob_q^\ast \beta=\lambda\beta$.
Let $y\in X^{(1)}$ be a codimension 1-point.
The residue
$$
\partial_y\beta\in H^1(\bar y,\Z_\ell(1))\hat \otimes \Z_\ell
$$
is either zero or a $\lambda$-eigenvector of $\Frob_q^\ast$.
Since $|\lambda|\neq 1$,  another residue computation shows that $\partial_y\beta$ is unramified and hence lifts to
$$
H^1_{nr}(\bar y,\Z_\ell(1))\hat \otimes \Z_\ell.
$$
The above group is a finite rank $\Z_\ell$-module  all of whose $\Frob_q^\ast$-eigenvalues have absolute value $q^{1/2}$ by \cite{deligne-weil} (use prime to $\ell$ alterations \cite{IT}). %, hence the $\Frob_q^\ast$-eigenvalues have absolute value $q^{1/2}$.
Since $|\lambda|\neq q^{1/2}$, we deduce $\partial_y\beta=0$.
It follows that 
$$
\beta_r= \partial_x\alpha_r\in H^2_{nr}(\bar X,\Z/\ell^r(2))= H^2(\bar X,\Z/\ell^r(2))/N^1.
$$
These classes are compatible for various $r$ and hence we get in the inverse limit a class
$$
(\partial_x\alpha_r)\in (H^2(\bar X,\Z_\ell(2))/N^1)\hat \otimes \Z_\ell=H^2(\bar X,\Z_\ell(2))/N^1,
$$
where we used that $N^1H^2$ is generated by algebraic classes and hence
$$
N^1H^2(\bar X,\Z_\ell(2))\otimes \Z/\ell^r\cong N^1H^2(\bar X,\Z/\ell^r(2))
$$
for all $r$.
Since $\bar \F$ contains all $\ell$-th roots of unity,
 $H^2(\bar X,\Z_\ell(2))/N^1$ is torsion-free, see Corollary \ref{cor:torsion-free}.
Since $(\partial_x\alpha_r) $ is nonzero, it follows that $\lambda$ is an eigenvalue of the $\Frob_q^\ast$-action on $ H^2(\bar X,\Z_\ell(2))$. 
Since $\Frob_q$ is the arithmetic Frobenius, the Weil conjectures (see \cite{deligne-weil}) imply $|\lambda|=q$, which contradicts the assumption $|\lambda|\notin  \{1,q^{1/2},q\}$.
\end{proof}

\section{Frobenius Eigenvalues on completed Milnor K-theory of $\CP^3$}

\begin{theorem} \label{thm:Sq}
Let $\F=\F_q$ be a finite field with $q$ elements.
Let $\Frob_q $ be the arithmetic Frobenius relative to $\F_q$ and consider the set of integral eigenvalues of $\Frob_q^\ast$ on $K^M_3(\bar \F(\CP^3))\hat \otimes \Z_\ell $: 
$$
S_q:=\{\lambda\in \Z \mid \text{ there is a nonzero class $\alpha\in K^M_3(\bar \F(\CP^3))\hat \otimes \Z_\ell $ with $\Frob_q^\ast \alpha=\lambda\alpha$.} \}
$$ 
Then:
\begin{enumerate}
\item \label{item1:weights-Milnor-K-3}
$S_q\subset \{\pm1,\pm q^{1/2},\pm q\}$; 
\item If $q=p^{4m}$, then $\{\pm1, \pm q^{1/2}\}\subset S_q$. 
\label{item2:eigenvalues-Milnor-K-3} 
\end{enumerate}  
\end{theorem}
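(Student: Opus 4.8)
The plan is to obtain part (1) at once from Proposition~\ref{prop:weights-for-completed-K-theory}, and to prove part (2) by exhibiting, for each of the four eigenvalues $+1,-1,q^{1/2},-q^{1/2}$, an explicit nonzero eigenvector inside a $\Frob_q$-equivariant direct summand of $K^M_3(\bar\F(\CP^3))\hat\otimes\Z_\ell$. For (1): since $\CP^3$ is rational we have $H^3_{nr}(\CP^3_{\bar\F},\mu_{\ell^r}^{\otimes 3})=0$ for all $r$, so Proposition~\ref{prop:weights-for-completed-K-theory} applies with $Y=\CP^3$ and shows that every eigenvalue $\lambda\in\Z_\ell$ of $\Frob_q^\ast$ on $K^M_3(\bar\F(\CP^3))\hat\otimes\Q_\ell$ has absolute value in $\{1,q^{1/2},q\}$. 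An integral eigenvalue $\lambda\in\Z\subset\Z_\ell$ therefore lies in $\{\pm1,\pm q^{1/2},\pm q\}$ (the values $\pm q^{1/2}$ being vacuous unless $q$ is a square). This is all of (1).

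For (2) I assume $q=p^{4m}$, so $q^{1/2}=p^{2m}\in\Z$. The structural tool is Milnor's computation of $K^M_\bullet$ of a rational function field \cite{milnor}: for a field $F$ and a variable $t$ there is a canonical split short exact sequence $0\to K^M_n(F)\to K^M_n(F(t))\xrightarrow{(\partial_\pi)}\bigoplus_\pi K^M_{n-1}(\kappa_\pi)\to 0$, with $\pi$ running over the closed points of $\A^1_F$, whose formation is compatible with field automorphisms of $F$ (which permute the $\pi$). Applying this successively to $\bar\F(x)\subset\bar\F(x,y)\subset\bar\F(x,y,z)=\bar\F(\CP^3)$ with the three coordinate variables, all defined over $\F$, one obtains: for each closed point $\bar x$ of $\CP^3$ reached through this tower, and for each smooth projective curve $C/\F$ whose function field occurs as one of the intermediate residue fields $\kappa_\pi$, a $\Frob_q$-equivariant \emph{direct summand} of $K^M_3(\bar\F(\CP^3))$ isomorphic to $K^M_0(\kappa(\bar x))$, respectively to $K^M_1(\bar\F(C))=\bar\F(C)^\ast$. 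As $\ell$-adic completion preserves finite direct sums and commutes with $\otimes\Q_\ell$, these remain direct summands after $\hat\otimes\Z_\ell$ and after $\hat\otimes\Q_\ell$. Moreover, for a class $\alpha\in K^M_3(\bar\F(\CP^3))$, if some iterated residue $\partial_{\bar x_3}\partial_{\bar x_2}\partial_{\bar x_1}\alpha$ along coordinate subvarieties is nonzero in the torsion-free group $K^M_0(\kappa(\bar x_3))$, then $\alpha$ has nonzero image in $K^M_3(\bar\F(\CP^3))\hat\otimes\Z_\ell$, since the iterated residue factors through this completion.

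I then treat the four eigenvalues. For $+1$: the symbol $(x,y,z)\in K^M_3(\bar\F(\CP^3))$ is $\Frob_q$-fixed, and its iterated residue along $\{x=0\}$, then $\{y=0\}$, then $\{z=0\}$ is $\pm1\in\Z$, so $(x,y,z)$ is a nonzero $\Frob_q$-fixed class in the completion, giving $1\in S_q$. For $-1$: fix $\theta\in\F_{q^2}\setminus\F_q$ and set $\alpha:=(x-\theta,y,z)-(x-\theta^q,y,z)$; then $\Frob_q\alpha=-\alpha$, and the iterated residue of $\alpha$ along $\{y=0\}$, $\{z=0\}$ and the degree-$2$ closed point $\{(x-\theta)(x-\theta^q)=0\}\subset\CP^1_x$ equals, up to sign, the nonzero vector $(1,-1)\in\Z^2=K^M_0(\{x=\theta\}\sqcup\{x=\theta^q\})$; hence $\alpha\neq0$ in the completion and $-1\in S_q$. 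For $\pm q^{1/2}$: since $q$ is a perfect square, Waterhouse's classification of Frobenius traces of elliptic curves over finite fields produces elliptic curves $E_+,E_-$ over $\F_q$ whose $q$-power Frobenius endomorphism is multiplication by $q^{1/2}$, respectively by $-q^{1/2}$ (trace $\pm 2q^{1/2}$). A Weierstrass equation of $E_\pm$ is a monic quadratic in $y$ over $\F_q(x)$, which realizes $\bar\F(E_\pm)=\bar\F(x)[y]/(\text{this polynomial})$ as a residue field in the tower above; hence $\bar\F(E_\pm)^\ast=K^M_1(\bar\F(E_\pm))$ is a $\Frob_q$-equivariant direct summand of $K^M_3(\bar\F(\CP^3))$, and so $(\bar\F(E_\pm)^\ast)\hat\otimes\Z_\ell$ is a $\Frob_q$-equivariant direct summand of $K^M_3(\bar\F(\CP^3))\hat\otimes\Z_\ell$. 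Finally, from the tautological exact sequence $0\to\bar\F^\ast\to\bar\F(E_\pm)^\ast\xrightarrow{\operatorname{div}}\operatorname{Div}(\bar E_\pm)\to\Pic(\bar E_\pm)\to0$ — in which $\operatorname{Div}(\bar E_\pm)$ is torsion-free and $\bar\F^\ast$ is divisible torsion — passing to $\ell$-adic completions yields a $\Frob_q$-equivariant injection $T_\ell E_\pm=T_\ell\Pic^0(\bar E_\pm)\hookrightarrow(\bar\F(E_\pm)^\ast)\hat\otimes\Z_\ell$. The arithmetic Frobenius acts on $T_\ell E_\pm$ as the $q$-power Frobenius endomorphism, i.e.\ as $\pm q^{1/2}\cdot\id$, so $T_\ell E_\pm$ provides nonzero classes in $K^M_3(\bar\F(\CP^3))\hat\otimes\Z_\ell$ with integral eigenvalue $\pm q^{1/2}$, whence $\pm q^{1/2}\in S_q$.

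The step I expect to require the most care is the structural one: checking that Milnor's splitting can be organised $\Frob_q$-equivariantly so that $K^M_1(\bar\F(C))$ and $K^M_0(\kappa(\bar x))$ are \emph{honest} $\Frob_q$-equivariant direct summands of $K^M_3(\bar\F(\CP^3))$ rather than mere subquotients, together with the bookkeeping that although Tate twists are invisible on the underlying groups over $\bar\F$, the relevant Frobenius action on $T_\ell E_\pm$ is the genuine weight-$1$ action and is correctly matched, through the residue maps, with the action on the Milnor side. Once this is settled, the residue computations and the appeal to Waterhouse's theorem are routine.
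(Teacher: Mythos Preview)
Your proof is correct, and for part~(2) it takes a genuinely different route from the paper. Part~(1) and the $\pm1$ cases are essentially the same in both. For $\pm q^{1/2}$, the paper works cohomologically: it takes the Fermat curve $X$ from Katz--Sarnak, forms $X\times Y$ with $Y$ a nodal cubic, produces a class in Borel--Moore $H^2$ via K\"unneth, pushes it into $\CP^3$ by a generic projection, and lifts to $H^3(F_0\CP^3_{\bar\F},\Z_\ell(3))$ via the Gysin sequence. You instead stay entirely inside Milnor K-theory: choose (via Waterhouse) an elliptic curve $E_\pm/\F_q$ with $\pi_{E_\pm}=[\pm q^{1/2}]$, realise $K^M_1(\bar\F(E_\pm))$ as a $\Frob_q$-equivariant direct summand of $K^M_3(\bar\F(x,y,z))$ through two applications of Milnor's sequence, and feed in $T_\ell E_\pm$ after completion.

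The point you flag as delicate --- that Milnor's splitting can be made $\Frob_q$-equivariant so that $K^M_1(\bar\F(E))$ is an honest direct summand rather than a subquotient --- does go through: the inclusion $K^M_n(F)\hookrightarrow K^M_n(F(t))$ is retracted by the specialisation $s_\infty(\alpha)=\partial_\infty(\{-t^{-1}\}\cdot\alpha)$, which is equivariant for any automorphism of $F$ fixing $t$; this identifies $\ker(s_\infty)$ equivariantly with $\bigoplus_\pi K^M_{n-1}(\kappa_\pi)$ via the residues, and a $\Frob_q$-fixed closed point $\pi_0$ then contributes an equivariant direct summand. Composing with the section $\beta\mapsto\{\beta,z\}$ at the rational point $z=0$ gives the summand inside $K^M_3$. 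Your approach is more elementary (no Borel--Moore cohomology or Gysin machinery) and in fact yields $\pm q^{1/2}\in S_q$ already when $q$ is a square, not only a fourth power; the paper's construction buys instead an explicit geometric class and fits naturally with the \'etale-cohomological framework used elsewhere in the paper.
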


We will use the following example from \cite{katz-sarnak} for the proof of item \eqref{item2:eigenvalues-Milnor-K-3} in Theorem \ref{thm:Sq}.

\begin{lemma}\label{lem:curve:katz-sarnak}
Let $q=p^m$ be a power of $p$ and consider the Fermat curve $X\subset \CP^2$, given by $x^{q+1}+y^{q+1}=z^{q+1}$. 
Then  $\Frob_{q^4}$ acts by multiplication with $q^{2}$ on $H^1(\bar X,\Z_\ell(1))$.
\end{lemma}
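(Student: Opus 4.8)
The plan is to compute the zeta function (equivalently, the Frobenius eigenvalues on $H^1$) of the Fermat curve $X\colon x^{q+1}+y^{q+1}=z^{q+1}$ over $\F_q$ explicitly, using the classical theory of Jacobi sums, and then to observe that after passing to $\F_{q^4}$ all of these eigenvalues collapse to $q^2$. Recall that the genus of $X$ is $g=\binom{q}{2}=q(q-1)/2$, so $H^1(\bar X,\Z_\ell)$ has rank $2g=q(q-1)$, and on $H^1(\bar X,\Z_\ell(1))$ the arithmetic Frobenius $\Frob_q$ acts with eigenvalues of absolute value $1$ (i.e.\ the eigenvalues on $H^1(\bar X,\Z_\ell)$ divided by $q$). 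We want to show each such eigenvalue $\alpha$ satisfies $\alpha^4=q^2$ -- equivalently that the unnormalized Frobenius eigenvalues $\beta=q\alpha$ on $H^1(\bar X,\Z_\ell)$ satisfy $\beta^4=q^6$, wait, more precisely $(\beta/q)^4=q^2$, i.e.\ $\beta^4=q^6$; but it is cleaner to normalize: on $H^1(\bar X,\Z_\ell(1))$ we want $\Frob_{q^4}=\Frob_q^4$ to act as the scalar $q^2$, so each eigenvalue $\alpha$ of $\Frob_q$ there must satisfy $\alpha^4=q^2$.

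First I would recall the standard description: the Fermat curve $x^N+y^N=z^N$ over a field containing the $N$-th roots of unity carries an action of $\mu_N\times\mu_N$, and over $\F_q$ with $N=q+1$ we do have $\mu_{q+1}\subset\F_{q^2}^\ast$. Decomposing $H^1$ into eigenspaces under this group action, the Frobenius eigenvalues are given by Jacobi sums $J(\chi^a,\chi^b)$ for a fixed character $\chi$ of order $N=q+1$ of $\F_{q^2}^\ast$, with $a,b,a+b\not\equiv 0\bmod N$ -- here one must work over $\F_{q^2}$ because that is the smallest field containing the $(q+1)$-st roots of unity, so the "Frobenius" naturally appearing is $\Frob_{q^2}$, and its eigenvalues on $H^1(\bar X,\Z_\ell)$ are (up to sign conventions) these Jacobi sums $J(\chi^a,\chi^b)$, each of absolute value $q$. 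The key arithmetic input is then the evaluation of such Jacobi sums over $\F_{q^2}$ attached to characters of order dividing $q+1$: because $\chi$ has order $q+1$ and the Frobenius $t\mapsto t^q$ on $\F_{q^2}^\ast$ sends a $(q+1)$-st root of unity $\zeta$ to $\zeta^q=\zeta^{-1}$, these Gauss and Jacobi sums satisfy special "complex multiplication by an imaginary quadratic order is actually multiplication by a rational" type degeneracies -- this is precisely the phenomenon isolated in the example of Katz--Sarnak. Concretely, one shows $J(\chi^a,\chi^b)=q\cdot(\text{root of unity})$ or, after a further extension, that all the relevant Jacobi sums become equal to a power of $q$.

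The cleanest route is: $X$ is (over $\bar\F$) isomorphic via a standard linear change of coordinates / the fact that $x^{q+1}=\Norm_{\F_{q^2}/\F_q}$ is related to a Hermitian curve. Indeed over $\F_{q^2}$ the curve $x^{q+1}+y^{q+1}+z^{q+1}=0$ is projectively equivalent to the Hermitian curve $u^{q+1}=v^q w+vw^q$, which is maximal over $\F_{q^2}$: it attains the Hasse--Weil--Serre bound, meaning $\#X(\F_{q^2})=q^2+1+2gq=q^2+1+q^2(q-1)$, and this forces every Frobenius eigenvalue of $\Frob_{q^2}$ on $H^1(\bar X,\Z_\ell)$ to equal $-q$ (a maximal curve has all $H^1$-eigenvalues of $\Frob_{q^2}$ equal to $-q$; a minimal curve would have them all $+q$). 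Hence $\Frob_{q^2}$ acts on $H^1(\bar X,\Z_\ell(1))$ as the scalar $-1$, so $\Frob_{q^4}=\Frob_{q^2}^2$ acts as $+1$ on $H^1(\bar X,\Z_\ell(1))$, i.e.\ as multiplication by $q^2$ on $H^1(\bar X,\Z_\ell)$. Wait -- I must reconcile: the statement to prove says $\Frob_{q^4}$ acts by multiplication by $q^2$ on $H^1(\bar X,\Z_\ell(1))$, not on $H^1(\bar X,\Z_\ell)$; let me re-examine. On $H^1(\bar X,\Z_\ell(1))$ the eigenvalues of $\Frob_q$ have absolute value $1$; if $\Frob_{q^2}$ acts as $-1$ there, then $\Frob_{q^4}$ acts as $+1$, not $q^2$. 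So either the intended normalization in the lemma is $H^1(\bar X,\Z_\ell)$ (unnormalized, eigenvalues of $\Frob_{q^2}$ all $=-q$, so $\Frob_{q^4}$ acts as $q^2$), or -- given the lemma literally writes $\Z_\ell(1)$ and wants the scalar $q^2$ -- the curve in question is being base-changed so that the relevant Frobenius is $\Frob_q$ (not $\Frob_{q^2}$) and it is $\Frob_q$ that acts as $-q^{1/2}$-ish... Given the ambient use (producing $\pm q^{1/2}$ eigenvalues for $q=p^{4m}$), the honest content is: the eigenvalues of $\Frob_q$ on $H^1(\bar X,\Z_\ell)$ all have the form $q^{1/2}\zeta$ with $\zeta$ a root of unity of controlled order, and raising to the $4$-th power (combined with the maximality of $X$ over $\F_{q^2}$) kills the root of unity, leaving $q^2$.

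So the proof plan, stated precisely: (i) identify $X$ over $\bar\F$ with a twist of the Hermitian curve and record its genus $g=q(q-1)/2$; (ii) count $\F_{q^2}$-points -- either directly via the Hermitian normal form or via Jacobi sums -- to see $X$ is $\F_{q^2}$-maximal; (iii) conclude from maximality that $\Frob_{q^2}$ acts as the scalar $-q$ on $H^1(\bar X,\Z_\ell)$, hence $\Frob_{q^4}$ acts as $q^2$; (iv) reconcile with the Tate twist in the statement (on $H^1(\bar X,\Z_\ell(1))$, $\Frob_{q^4}$ then acts as the scalar $q^2/q^4=q^{-2}$... ) -- or, more likely, the lemma is quoting Katz--Sarnak verbatim where the stated normalization makes the scalar come out to $q^2$ because the Frobenius there is the \emph{geometric} Frobenius, or because they consider $H^1(\bar X,\Z_\ell(1))$ with $(1)$ denoting a shift that multiplies eigenvalues by $q$ rather than divides; I would simply follow the Katz--Sarnak reference for the exact normalization and transcribe their computation. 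The main obstacle is the bookkeeping of normalizations (arithmetic vs.\ geometric Frobenius, and which power of $q$ the twist $(1)$ contributes in the source), together with making the Jacobi-sum / maximality argument airtight; the underlying mathematics -- that the Fermat curve of degree $q+1$ is maximal over $\F_{q^2}$, so its Frobenius is a scalar after a bounded extension -- is classical and is exactly the phenomenon recorded in \cite{katz-sarnak}, so I would cite it for the precise statement and present only the point count plus the maximal-curve argument as justification.
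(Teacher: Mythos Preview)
Your approach is essentially the same as the paper's: the Fermat curve of degree $q+1$ is maximal over $\F_{q^2}$, so (by the point count/Weil bound argument recorded in Katz--Sarnak) the geometric Frobenius relative to $\F_{q^2}$ acts as the scalar $-q$ on $H^1(\bar X,\Z_\ell)$. The paper's proof is exactly this one sentence plus two lines of twist/Frobenius bookkeeping.

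The only genuine gap in your proposal is that bookkeeping, which you flag as confusing but do not resolve; several of your intermediate statements are in fact wrong. Concretely: the eigenvalues of the \emph{arithmetic} Frobenius $\Frob_q$ on $H^1(\bar X,\Z_\ell)$ do not have absolute value $q$ (that is the geometric Frobenius); they have absolute value $q^{-1/2}$, since arithmetic and geometric Frobenius act inversely on \'etale cohomology. Likewise, the Tate twist does not ``divide eigenvalues by $q$'' for the arithmetic Frobenius: $\Frob_q$ acts on $\Z_\ell(1)$ by \emph{multiplication} by $q$ (the cyclotomic character). Putting these together correctly: geometric Frobenius relative to $\F_{q^2}$ acts as $-q$ on $H^1(\bar X,\Z_\ell)$, hence as $(-q)\cdot q^{-2}=-q^{-1}$ on $H^1(\bar X,\Z_\ell(1))$; inverting, the arithmetic $\Frob_{q^2}$ acts as $-q$ on $H^1(\bar X,\Z_\ell(1))$; squaring gives $\Frob_{q^4}=q^2$, as claimed. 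Once you fix these two sign/direction conventions, your argument is complete and coincides with the paper's.
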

\begin{proof} 
As a consequence of a point count and the Weil bound, the geometric Frobenius relative to $\F_{q^2}$ acts via multiplication by $-q$  on $H^1(\bar X,\Z_\ell)$,  see  \cite[p.\ 8]{katz-sarnak}, and hence via multiplication by $-q^{-1}$ on $H^1(\bar X,\Z_\ell(1))$.
Since the action of the geometric Frobenius relative to $\F_{q^2}$ agrees with the inverse of the action of the arithmetic Frobenius $\Frob_{q^2}$, we find that 
 $\Frob_{q^2}$ acts via $-q$ and hence $\Frob_{q^4}$ acts via $q^2$ on $H^1(\bar X,\Z_\ell(1))$,  as we want.
\end{proof}

For the proof of Theorem \ref{thm:Sq} it will be useful to use Borel--Moore cohomology, which for an equi-dimensional algebraic scheme $X$ over a field $k$ is defined by 
$$
H^{i}_{BM}(X, A(n)):=H^{i-2d_X} (X_{\proet},\pi_X^!A(n-d_X)), 
$$
where $A\in \{\Z/\ell^r,\Z_\ell,\Q_\ell\}$ for a prime $\ell$ invertible in $k$, $d_X=\dim X$,  $\pi_X:X\to \Spec k$ denotes the structure map and $X_\proet$ is the pro-\'etale site of $X$, see \cite[Section 4 and Proposition 6.6]{Sch-refined}.
If $k$ is a finite field or the algebraic closure of a finite field, then the pro-\'etale site may be replaced by the \'etale site in the above formula, as in this case the above groups are finitely generated $A$-modules. 
Basic properties are listed in \cite[\S 4, \S 6]{Sch-refined} and in \cite[\S 2.4]{Balkan-Sch}.
Most importantly,  there are proper pushforwards \cite[Proposition 6.6, P1]{Sch-refined}, there is a Gysin sequence \cite[Proposition 6.6, P2]{Sch-refined}, and Borel--Moore cohomology agrees with continuous $\ell$-adic cohomology if $X$ is smooth and equi-dimensional \cite[Lemma 6.5]{Sch-refined}.
If $X$ is defined over a finite field $\F$, then $G_\F$ acts naturally on $H^{i}_{BM}(\bar X, A(n))$, see e.g.\ \cite[\S 3.2]{Balkan-Sch}.

\begin{lemma} \label{lem:nodal-curve-times-Fermat}
Let $X$ be as in Lemma \ref{lem:curve:katz-sarnak} and let $Y\subset \CP^2$ be a nodal cubic curve over $\F=\F_q$ with one node  $y_0\in Y$.
Then there is a class $\epsilon\in H^2_{BM}(\bar X\times \bar Y,\Z_\ell(2))$ with the following properties:
the image of $\epsilon$ in $H^2 (F_0(\bar X\times \bar Y),\Z_\ell(2))$ is nonzero and an eigenvector for the $\Frob_{q^4}$-action with eigenvalue $q^{2}$.
\end{lemma}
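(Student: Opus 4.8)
The plan is to construct $\epsilon$ as an exterior product of two Borel--Moore cohomology classes, one on $\bar X$ capturing the weight of the Katz--Sarnak Fermat curve, and one on $\bar Y$ capturing the node via a rank-one piece of $H^1_{BM}$. First I would analyze the nodal cubic $Y$: normalizing $Y$ gives $\nu:\CP^1\to Y$ with $\nu^{-1}(y_0)$ two $\F$-rational points, and the Gysin/localization sequence for $\{y_0\}\hookrightarrow Y$ together with $H^1_{BM}(\CP^1)=0$ produces a class in $H^1_{BM}(\bar Y,\Z_\ell(1))$, or more precisely a nonzero class in $H^1(F_0\bar Y,\Z_\ell(1))$, arising from the ``loop'' around the node; the key point is that this class is $\Frob_q$-invariant (weight $0$) because the two branches at the node are defined over $\F_q$. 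I would want this class to survive restriction to the generic point $F_0\bar Y$, which is the reason to phrase things in $H^2(F_0(\bar X\times\bar Y),\Z_\ell(2))$ rather than in honest cohomology of the product.

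Next I would take the class on $\bar X$ side. By Lemma~\ref{lem:curve:katz-sarnak}, $\Frob_{q^4}$ acts by $q^2$ on $H^1(\bar X,\Z_\ell(1))$, and since $X$ is smooth projective, $H^1_{BM}(\bar X,\Z_\ell(1))=H^1(\bar X,\Z_\ell(1))$; pick any nonzero $\eta$ there. Then set $\epsilon:=\eta\times\delta$, the exterior (Künneth) product in Borel--Moore cohomology of $\eta\in H^1_{BM}(\bar X,\Z_\ell(1))$ with the node class $\delta\in H^1_{BM}(\bar Y,\Z_\ell(1))$, landing in $H^2_{BM}(\bar X\times\bar Y,\Z_\ell(2))$; such exterior products exist by the formalism cited (proper pushforward plus the projection formula, or directly the external cup product on the pro-étale site). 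The Frobenius eigenvalue on $\epsilon$ is the product of the eigenvalues: $\Frob_{q^4}$ acts by $q^2$ on $\eta$ and by $1$ on $\delta$, so by $q^2$ on $\epsilon$, as required.

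The last thing to check is that the image of $\epsilon$ in $H^2(F_0(\bar X\times\bar Y),\Z_\ell(2))$ is nonzero. Here I would use a Künneth-type injectivity statement for the generic-point cohomology: restricting to the generic point of $\bar X\times\bar Y$, the class $\eta|_{F_0\bar X}$ is nonzero (it is a nonzero element of $H^1$ of a curve, which injects into $H^1$ of its generic point by the Bloch--Kato/Hilbert 90 identification, cf.\ Theorem~\ref{thm:bloch-kato}, since $H^1(\bar X,\Z_\ell(1))$ is torsion-free of positive rank for a curve of positive genus), and similarly $\delta|_{F_0\bar Y}\neq 0$. Then one argues that the cup product of two nonzero classes pulled back from the two factors to the generic point of the product is nonzero — most cleanly by specializing back: restrict along the inclusion of the generic point of a suitable fiber, or use that $H^1(F_0\bar X,\Z_\ell(1))\otimes H^1(F_0\bar Y,\Z_\ell(1))$ maps to $H^2(F_0(\bar X\times\bar Y),\Z_\ell(2))$ and detect nonvanishing after reduction mod $\ell$ via the Bloch--Kato isomorphism with Milnor K-theory (a symbol $\{f,g\}$ with $f\in\bar\F(X)^\ast$, $g\in\bar\F(Y)^\ast$ of independent ``valuation type'' is nonzero in $K^M_2$ of the product field). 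I expect this nonvanishing-on-the-generic-point step to be the main obstacle, since it requires being careful that neither of the two contributing classes becomes $\ell^\infty$-divisible after restriction and that their product is not killed; the node class $\delta$ in particular must be shown to remain nonzero and non-divisible on $F_0\bar Y$, which I would pin down via its nontrivial residue along the exceptional behaviour at $y_0$ or equivalently its image under the boundary map associated to the normalization.
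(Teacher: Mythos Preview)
Your approach is essentially the paper's: build $\epsilon$ as a K\"unneth/exterior product of a nonzero $\eta\in H^1(\bar X,\Z_\ell(1))$ (on which $\Frob_{q^4}$ acts by $q^2$) with a Galois-invariant generator of $H^1_{BM}(\bar Y,\Z_\ell(1))\cong\Z_\ell$. Two implementation points differ and are worth noting. First, rather than invoking external products directly in Borel--Moore cohomology, the paper works on the smooth open $\bar X\times\bar Y^{\reg}=\bar X\times\mathbb G_{m,\bar\F}$, applies ordinary K\"unneth there to get $\epsilon=\alpha\otimes\beta\in H^1(\bar X,\Z_\ell(1))\otimes H^1(\mathbb G_{m,\bar\F},\Z_\ell(1))$, and then lifts to $H^2_{BM}(\bar X\times\bar Y,\Z_\ell(2))$ via the Gysin sequence along $\bar X\times\{y_0\}$; the lift exists because the residues of $\beta$ at $0$ and $\infty$ cancel at the node. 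Second---and this addresses the step you correctly flag as the crux---the nonvanishing at the generic point is handled by a single explicit residue rather than a K\"unneth-injectivity or Milnor-symbol argument: since $Y$ is rational one has $F_0(\bar X\times\bar Y)=F_0(\bar X\times\CP^1_{\bar\F})$, and then $\partial_{\bar X\times\{0\}}(\epsilon)=-\partial_0(\beta)\cdot\alpha\in H^1(F_0\bar X,\Z_\ell(1))$, which is nonzero because $\partial_0(\beta)\neq 0$ and $H^1(\bar X,\Z_\ell(1))$ injects into $H^1(F_0\bar X,\Z_\ell(1))$. This is exactly the ``valuation type'' detection you allude to at the end, made precise by choosing the divisor $\bar X\times\{0\}$ on the smooth birational model; your other proposed routes would require more work to make rigorous.
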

\begin{proof}
The normalization of $Y$ is $\CP^1$ and we may assume that the preimage of the singular point $y_0\in Y$ corresponds to $0,\infty\in \CP^1$.
In particular,   $Y^{\operatorname{reg}}=\mathbb G_m$.
The Gysin sequence yields a short exact sequence
\begin{align} \label{eq:Gysin-les-1}
H^{-1}(y_0,\Z_\ell(0))=
0\longrightarrow H^1_{BM}(\bar Y,\Z_\ell(1))\longrightarrow H^1(\mathbb G_{m,\bar \F},\Z_\ell(1)) \stackrel{\del}\longrightarrow H^0(y_0,\Z_\ell(0)) .
\end{align}
Applying the Gysin sequence to $\mathbb G_{m,\bar \F}\subset \CP^1_{\bar \F}$, we see that 
the group $H^1(\mathbb G_{m,\bar \F},\Z_\ell(1))\cong \Z_\ell$ is generated by a class that has opposite residues at $0$ and $\infty$.
By functoriality of the Gysin sequence, the residue map in \eqref{eq:Gysin-les-1} is the sum of the residues at 0 and infinity, hence it is zero.
It follows that $H^1_{BM}(\bar Y,\Z_\ell(1))\cong \Z_\ell$ is a free $\Z_\ell$-module of rank 1 with trivial Galois action.

Consider the singular surface
$
X\times Y.
$
The Gysin sequence yields an exact sequence
$$
H_{BM}^0(\bar X\times\{y_0\},\Z_\ell(1))\longrightarrow 
H^2_{BM}(\bar X\times \bar Y,\Z_\ell(2))\longrightarrow H^2_{BM}(\bar X\times \bar Y^{\reg},\Z_\ell(2))\longrightarrow H_{BM}^1(\bar X\times\{y_0\},\Z_\ell(1)) .
$$
Since $X\times Y^{\reg}$ is smooth, we find that
$$
H^2_{BM}(\bar X\times \bar Y^{\reg},\Z_\ell(2))=H^2 (\bar X\times \bar Y^{\reg},\Z_\ell(2)).
$$
Since the $\ell$-adic cohomology of $\bar X$ and $\bar Y^{\reg}$ is torsion-free (both are curves over an algebraically closed field),  the K\"unneth formula yields a Galois-equivariant isomorphism
$$
H^2 (\bar X\times \bar Y^{\reg},\Z_\ell(2))\cong H^1 (\bar X,\Z_\ell(1))\otimes H^1(\bar Y^{\reg},\Z_\ell(1)) ,
$$
see \cite[Theorem 22.4]{milne-notes}.

Let $\alpha\in  H^1 (\bar X,\Z_\ell(1))$ and $\beta\in  H^1(\bar Y^{\reg},\Z_\ell(1))$ be nonzero.
By Lemma \ref{lem:curve:katz-sarnak},  $\Frob_{q^4}(\alpha)=q^{2}\alpha$.
Moreover, $\beta$ is Galois-invariant because $H^1(\bar Y^{\reg},\Z_\ell(1))=\Z_\ell$ with trivial Galois action.
Via the above K\"unneth formula, we get a nontrivial class
$$
\epsilon:=\alpha\otimes \beta\in H^2_{BM}(\bar X\times \bar Y^{\reg},\Z_\ell(2))
$$
with $\Frob_{q^4}(\epsilon)=q^{2}\epsilon$.
We have seen above that $\beta$ has trivial residues on $\bar Y$.
It thus follows from the above Gysin sequence that $\epsilon$ lifts to a class in $H^2_{BM}(\bar X\times \bar Y,\Z_\ell(2))$ that we denote by the same symbol:
$$
\epsilon\in H^2_{BM}(\bar X\times \bar Y,\Z_\ell(2)).
$$

It remains to show that the restriction of $\epsilon $ to $F_0(\bar X\times \bar Y)$ is nonzero.
Since $Y$ is rational,  we have $F_0(\bar X\times \bar Y)= F_0(\bar X\times \CP_{\bar \F}^1)$.
There is a residue map
$$
\del_{\bar X\times \{0\}}:H^2 (F_0(\bar X\times \bar Y),\Z_\ell(2))\longrightarrow H^1( F_0\bar X,\Z_\ell(1))
$$
that is associated to the divisor $\bar X\times \{0\}$ on $ \bar X\times \CP_{\bar \F}^1$.
The image of $\epsilon$ under this map is given by
$$
\del_{\bar X\times \{0\}} (\epsilon)=-\partial_{0}(\beta)\cdot \alpha\in H^1(F_0\bar X,\Z_\ell(1)),
$$
see \cite[Lemma 2.4]{Sch-survey}.
The above class is nonzero because $\partial_0\beta$ is nonzero and $H^1(\bar X,\Z_\ell(1))\to H^1(F_0\bar X,\Z_\ell(1)) $ is injective by the Gysin sequence.
Hence, the image of $\epsilon$ in $H^2 (F_0(\bar X\times \bar Y),\Z_\ell(2))$ is nonzero, as we want.
This concludes the proof of the lemma. 
\end{proof}

\begin{proof}[Proof of Theorem \ref{thm:Sq}] 
Item \eqref{item1:weights-Milnor-K-3} follows from Proposition \ref{prop:weights-for-completed-K-theory}.

To prove \eqref{item2:eigenvalues-Milnor-K-3},  %let $q=p^{4m}$.
let $X$ and $Y$ be as in Lemmas \ref{lem:curve:katz-sarnak} and \ref{lem:nodal-curve-times-Fermat}.
We embed $X\times Y$ into some projective space and project generically to $\CP^3$ to obtain an integral subscheme $Z\subset \CP^3$ with a finite birational map $f:X\times Y\to Z$.
Since $f$ is proper, we can consider the class
$$
f_\ast \epsilon\in H^2_{BM}(Z,\Z_\ell(2)) ,
$$ 
where $\epsilon\in H_{BM}^2(\bar X\times \bar Y,\Z_\ell(2))$ is as in Lemma \ref{lem:nodal-curve-times-Fermat}.
Since $f$ is birational, the restriction of this class to $F_0\bar Z$ agrees with the restriction of $\epsilon$ to $F_0(\bar X\times \bar Y)$.
Let $U=\CP^3\setminus Z$ be the complement of $Z$.
There is a short exact sequence
$$
H^3_{BM}(\CP^3_{\bar \F}, \Z_\ell(3)) \longrightarrow H^3_{BM}(\bar U, \Z_\ell(3))\longrightarrow  H^2_{BM}(\bar Z,\Z_\ell(2))\longrightarrow H^4_{BM}(\CP^3_{\bar \F}, \Z_\ell(3)).
$$
The class $f_\ast \epsilon$ has to map to zero in $H^4_{BM}(\CP^3_{\bar \F}, \Z_\ell(3))\cong \Z_\ell(1)$ for weight reasons and so it lifts to a class 
$$
\alpha\in H^3_{BM}(\bar U, \Z_\ell(3)).
$$
Since $\alpha$ is defined on $\bar U$, the only possible residue of its restriction to $F_0\CP^3_{\bar \F}$ is given by the residue along the map
$$
\del_Z:H^3 (F_0 \CP^3_{\bar \F}, \Z_\ell(3))\longrightarrow H^2(F_0Z,\Z_\ell(2)) .
$$
By Lemma \ref{lem:nodal-curve-times-Fermat}, we have 
$$
\del_Z  \Frob_{q^4}^\ast \alpha=
\Frob_{q^4}^\ast \del_Z \alpha= q^{2} \del_Z \alpha.
$$
It follows that the class 
$$
\Frob_{q^4}^\ast\alpha-q^{2} \alpha\in H^3(F_0\CP^3_{\bar \F},\Z_\ell(3))
$$
is unramified. 
Hence the above class vanishes because $\CP^3_{\bar \F}$ has no unramified cohomology in positive degree (see e.g.\ \cite[Corollary 1.8(1) for $j=0$]{Sch-moving}).
It follows that $\Frob_{q^4}^\ast\alpha=q^{2} \alpha$.
Since $\alpha$ is nonzero we conclude that $q^{2}$ is an eigenvalue of $\Frob_{q^4}^\ast$ on $H^3 (F_0 \CP^3_{\bar \F}, \Z_\ell(3))$  for any $q=p^m$.

Replacing $q$ by $q^4$ shows that $q^{1/2}\in S_q$ whenever $q=p^{4m}$ is a fourth power.
Replacing in the above argument $Y$ by a union of two smooth Galois conjugate rational curves that meet in two points, we obtain via a similar argument that $-q^{1/2}\in S_q$ if $q=p^{4m}$ is a $4$-th power.

It remains to show that $\lambda:=\pm 1\in S_q$. 
To this end consider the function field $\bar \F(x)$ in one variable.
There is a non-constant rational function $\xi\in \bar \F (x)^\ast $ with $\Frob_q ( \xi)=\lambda \xi$, where $ \bar \F (x)^\ast=K_1^M(\bar \F(x))$ is written as an additive group.
(For instance, if $\lambda=-1$, then we can pick $\xi=f/g$ for linear polynomials $f$ and $g$ that are interchanged by Frobenius.)
We denote affine coordinates on $\A^3\subset \CP^3$ by $x,y,z$ and find that the class $\alpha:=(\xi,y,z)\in K^M_3(\bar \F(\CP^3))$ is nonzero (evaluate the residue at $z=0$ followed by the residue at $y=0$) and satisfies $\Frob_q^\ast \alpha=\lambda \alpha$.
The image of $\alpha$ in the $\ell$-adic completion $K^M_3(\bar \F(\CP^3))\hat \otimes \Z_\ell$ is nonzero by a similar residue computation as above.
This proves $\pm 1\in S_q$, which concludes the proof of  \eqref{item2:eigenvalues-Milnor-K-3}.
This concludes the proof of the theorem.
\end{proof}

\section{Proof of Theorem \ref{thm:basic-properties}}

\begin{proof}[Proof of Theorem \ref{thm:basic-properties}]
Item \eqref{item1:thm:basic-properties} follows from Theorem \ref{thm:K_n-completed-countable-free} together with the straightforward observation that $K^M_3 (\bar \F(\CP^3))\hat \otimes \Z_\ell$ is not a $\Z_\ell$-module of finite rank. 

We will prove items \eqref{item2:thm:basic-properties}  and \eqref{item3:thm:basic-properties} more generally for $K^M_n(\bar \F(X))$ in place of $K^M_3(\bar \F(\CP^3))$, where  $X$ denotes any geometrically irreducible variety over $\F=\F_q$.
Item \eqref{item2:thm:basic-properties} follows in this generality  from the fact that the $\Frob_q$-action has finite orbits on $K^M_n(\bar \F(X))$, hence on $K^M_n(\bar \F(X))\otimes \Q_\ell$, and so all eigenvalues are roots of unity.

To prove item \eqref{item3:thm:basic-properties}, 
assume that   $\Frob_q-q\cdot \id$ has a nontrivial kernel on $K^M_n(\bar \F(X))\hat \otimes \Q_p$.
This implies that there is a nontorsion class $\alpha\in K^M_n(\bar \F(X))\hat \otimes \Z_p$ such that $\Frob_q(\alpha)=q\cdot \alpha$.
To see that this is impossible, note that $\Frob_q$ acts via isomorphisms on  $K^M_n(\bar \F(X))$ and hence also via isomorphisms on the completion
$$
K^M_n(\bar \F(X))\hat \otimes \Z_p =\lim_{\substack{\longleftarrow\\ r}} K^M_n(\bar \F(X))/p^r.
$$
Let $\phi_q:K^M_n(\bar \F(X))\hat \otimes \Z_p\to K^M_n(\bar \F(X))\hat \otimes \Z_p$ be an inverse of $\Frob_q$.
Then $\Frob_q(\alpha)=q\cdot \alpha$ implies $\Frob_q^{\circ r}(\alpha)=q^r\cdot \alpha$ and hence $\alpha=q^r\cdot \phi_{q^{r}}^{\circ r}(\alpha)$ is $q^r$-divisible for all $r$.
This implies
$$
\alpha=0\in K^M_n(\bar \F(X))\hat \otimes \Z_p,
$$ 
which contradicts the assumption that $\alpha$ was nontorsion.

 Item \eqref{item4:thm:basic-properties} follows from Proposition \ref{prop:tensor product} and item \eqref{item5:thm:basic-properties} follows from Theorem \ref{thm:Sq}.
 This concludes the proof of the theorem.
\end{proof}

\section*{Acknowledgements} 
I am very grateful to the referee for his or her comments.
This project has received funding from the European Research Council (ERC) under the European Union's Horizon 2020 research and innovation programme under grant agreement No 948066 (ERC-StG RationAlgic).

%\begin{thebibliography}{9}  


\begin{thebibliography}{HKLR} 

\bibitem[Stacks]{stacks-project}
The {Stacks project authors}, {\em   The Stacks project},
 \url{https://stacks.math.columbia.edu}, 2023.
  
%
%\bibitem[SGA4$\frac{1}{2}$]{SGA4.5}
%P.\ Deligne, {\em Cohomologie \'etale}, Lecture Notes in Mathematics \textbf{569} Springer-Verlag, Berlin, 1977, S\'eminaire de G\'eom\'etrie Alg\'ebrique du Bois-Marie SGA 4 1/2, avec la collaboration de J.\ F.\ Boutot, A.\ Grothendieck, L.\ Illusie et J.-L.\ Verdier.
%
%\bibitem[SGA4.2]{SGA4.2}
%M.\ Artin, A.\ Grothendieck, and J.-L.\ Verdier, {\em Th\'eorie des Topos et Cohomologie Etale des Sch\'emas}, SGA 4, Tome 2, Lecutre Notes in Mathematics 270, Springer, Berlin 1970.
%
%\bibitem[SGA4.3]{SGA4.3}
%M.\ Artin, A.\ Grothendieck, and J.-L.\ Verdier, {\em Th\'eorie des Topos et Cohomologie Etale des Sch\'emas}, SGA 4, Tome 3, Lecutre Notes in Mathematics 305, Springer, Berlin 1973.
%
%\bibitem[Stacks21]{stacks-project}
%The Stacks project authors, {\em  The Stacks project},
% \url{https://stacks.math.columbia.edu},
%2021.
%
%%\bibitem[AKMW02]{AKMW}
%%D.\   Abramovich,    K.\   Karu,    K.\  Matsuki  and   J.\ Wlodarczyk, {\em Torification  and  factorization  of  birational  maps}, J.\ Amer.\ Math.\ Soc.\ \textbf{15} (2002), 531--572.
%
%%\bibitem[AO]{AO}
%%H.\ Ahmadinezhad and T.\ Okada, {\em Stable rationality of higher dimensional conic bundles}, arXiv:1612.04206.
%%
%%\bibitem[Ara]{arason}
%%J.\ Kr.\ Arason, {\em Cohomologische Invarianten quadratischer Formen}, J.\ Algebra \textbf{36} (1975), 448--491.

%\bibitem[AM72]{AM}
%M.\ Artin and D.\ Mumford, {\em Some elementary examples of unirational varieties which are not rational}, Proc.\ London Math.\ Soc.\ (3) \textbf{25} (1972), 75--95.
%
%\bibitem[AM11]{asok-morel}
%A.\ Asok and F.\ Morel, {\em Smooth varieties up to A1-homotopy and algebraic h-cobordisms},  Adv.\  Math.\ \textbf{227} (2011),  1990--2058.
%
%%
%\bibitem[Aso13]{asok}
%A.\ Asok, {\em Rationality problems and conjectures
%of Milnor and Bloch-Kato}, Compos.\ Math.\ \textbf{149} (2013), 1312--1326.
%
%% \bibitem[AH62]{AH}
%%M.F.\ Atiyah and F.\ Hirzebruch, {\em Analytic cycles on complex manifolds}, Topology  \textbf{1} (1962), 25--45.
%
%%\bibitem[AM]{atiyah-macdonald}
%%M.F.\ Atiyah and I.G.\ MacDonald, {\em Introduction to Commutative Algebra},  Persus Books, 1994.
%
%%
%%%\bibitem[ACTP]{ACTP}
%%%A.\ Auel, J.-L.\ Colliot-Th\'el\`ene and R.\ Parimala, {Universal unramified cohomology of cubic fourfolds containing a plane}, 
%%% in Brauer groups and obstruction problems: moduli spaces and arithmetic (Palo Alto, 2013), Progress in Mathematics, vol.\ \textbf{320}, Birkh\"auser Basel, 2017, 29--56. 
%% 
%%%\bibitem[APS]{APS}
%%%A.\ Auel, R.\ Parimala and V.\ Suresh, {\em Quadric Surface Bundles over Surfaces}, Doc.\ Math.\ Extra Volume Merkurjev (2015), 31--70.
%%%
%%%\bibitem[ABBP]{ABGP}
%%%A.\ Auel, C.\ B\"ohning, H.-C.\ Graf v. Bothmer and A.\ Pirutka, {\em Conic bundles with nontrivial unramified Brauer group over threefolds}, arXiv:1610.04995.
%%
%
%%\bibitem[BCC92]{BCC}
%%E.\ Ballico, F.\ Catanese, and C.\ Ciliberto, {\em Trento examples}, in: Classification of irregular varieties (Trento, 1990), 134--139, Lecture Notes in Math.\ \textbf{1515}, Springer, Berlin, 1992.

\bibitem[BS24]{Balkan-Sch}
S.\ Balkan and S.\ Schreieder, {\em Cycle conjectures and birational invariants over finite fields}, Preprint 2024,  arXiv:2406.14499.

%\bibitem[BHS21]{BHS}
%C.\ Balwe, A.\ Hogadi, and A.\ Sawant, {\em Geometric criteria for $\mathbb A^1$-connectedness and applications to norm varieties}, to appear in Journal of Algebraic Geometry, 2021. 


%
%\bibitem[Bei87]{beilinson}
%A.\ Beilinson, {\em Height pairing between algebraic cycles}, Contemp.\ Math.\ AMS \textbf{67} (1987), 1--24.
%
%%\bibitem[BDP13]{BDP13}
%%M.\ Bertolini, H.\ Darmon,  and K.\ Prasanna, 
%%{\em Generalized Heegner cycles and p-adic Rankin L-series}, (with an appendix by B.\ Conrad), Duke Math.\ J.\ \textbf{162} (2013), 1033--1148.
%
%\bibitem[BDP17]{BDP17}
%M.\ Bertolini, H.\ Darmon,  and K.\ Prasanna, 
%{\em p-adic L-functions and the coniveau filtration on Chow groups},  (with an appendix by B.\ Conrad), J. \ reine angew. \ Math.\ \textbf{731} (2017),  21--86.
%
%
%
%\bibitem[BS15]{BS}
%B.\ Bhatt and P.\ Scholze, {\em The pro-\'etale topology of schemes},  Ast\'erisque \textbf{369} (2015),  99--201.
%
%%
%%\bibitem[BeOt20a]{Benoist-Ottem-Enriques}
%%O.\ Benoist and J.C.\ Ottem, {\em Failure of the integral Hodge conjecture for threefolds of Kodaira dimension zero}, Commentarii Mathematici Helvetici \textbf{95} (2020), 27-35.
%%
%%\bibitem[BeOt20b]{Benoist-Ottem}
%%O.\ Benoist and J.C.\ Ottem, {\em Two coniveau filtrations}, arXiv:2003.02199, to appear in Duke Math.\ Journal.
%
%%\bibitem[Boa99]{boardman}
%%J.M.\ Boardman, {\em Conditionally Convergent Spectral Sequences}, Preprint 1999.
%%\url{https://hopf.math.purdue.edu/Boardman/ccspseq.pdf?fbclid=IwAR0WX760TWKNAnQF15vIgI_BY_rET4MposkjHIohWXoRCwXBRhl52H_5WKw}
%%
%
%
%
%\bibitem[Blo79]{bloch-compositio}
%S.\ Bloch, {\em Torsion algebraic cycles and a theorem of Roitman}, Compositio Mathematica \textbf{39} (1979), 107--127.
%%
\bibitem[Blo80]{bloch-book}
S.\ Bloch, {\em Lectures on algebraic cycles},  Duke University Mathematics Series, IV. Duke University, Mathematics Department, Durham, N.C., 1980. 182 pp.
%2nd ed., Cambridge 2010.
%
%\bibitem[Blo85]{bloch-duke}
%S.\ Bloch, {\em Algebraic cycles and values of L-functions II}, Duke Math.\ J.\  \textbf{52} (1985), 379--397.
%
%\bibitem[Blo94]{bloch-JAG}
%S.\ Bloch,{\em The moving lemma for higher Chow groups},
%J.\ Algebraic Geom.\ \textbf{3} (1994), 537--568. 

\bibitem[BE96]{bloch-esnault}
S.\ Bloch and H.\ Esnault, {\em The coniveau filtration and non-divisibility for algebraic cycles}, Math.\ Ann.\
\textbf{304} (1996), 303--314.

%
%\bibitem[BO74]{BO}
%S.\ Bloch and A.\ Ogus, {\em Gersten's conjecture and the homology of schemes}, Ann.\ Sci.\ \'Ec.\ Norm.\ Sup\'er., \textbf{7} (1974), 181--201.
%
%\bibitem[BS83]{Bloch-Srinivas}
%S.\ Bloch and V.\ Srinivas, {Remarks on correspondences and algebraic cycles}, Amer.\ J.\ Math.\ \textbf{105} (1983), 1235--1253.
%
%%\bibitem[Bre93]{bredon}
%%G.\ Bredon, {\em Geometry and Topology}, Springer, New York, 1993.
%
%%\bibitem[Bro03]{brosnan-documenta}
%%P.\ Brosnan, {\em A short proof of Rost nilpotence via refined correspondences}, Doc.\ Math.\ \textbf{8} (2003), 69--78.
%%
%%\bibitem[Bro05]{brosnan-inventiones}
%%P.\ Brosnan,  {\em On motivic decompositions arising from the method of Bialynicki-Birula}, Invent.\ Math.\ \textbf{161} (2005), 91--111.
%
%%
%%%\bibitem[Bea]{bea}
%%%A.\ Beauville, {\em A very general sextic double solid is not stably rational}, Bull.\ London Math.\ Soc.\ \textbf{48} (2016), 312--324. 
%%
%%%\bibitem[B2]{beauville-erratum}
%%%A.\ Beauville. {\em Erratum: A very general quartic double fourfold or
%%%fivefold is not stably rational} (Algebr.\ Geom.\ \textbf{2} (2015), 508--513).\ Algebr.\ Geom., \textbf{3} (2016) 137--137.
%%%
%%%\bibitem[BB]{BB}
%%%C.\ B\"ohning and H.-C.\ Graf v.\ Bothmer, {\em On stable rationality of some conic bundles and moduli spaces of Prym curves}, arXiv:1605.03029,
%%%to appear in Comm.\ Math.\ Helvetici.

%\bibitem[Car80]{carlson}
%J.A.\ Carlson, {\em Extensions of mixed Hodge structures}. Journ\'ees de G\'eometrie Alg\'ebrique d'Angers, Juillet 1979/Algebraic Geometry, Angers, 1979, pp. 107–127, Sijthoff \& Noordhoff, Alphen aan den Rijn—Germantown, Md., 1980.
%%

\bibitem[Ch13]{charles}
F.\ Charles,  {\em The Tate conjecture for K3 surfaces over finite fields,} Invent.\ Math.\ \textbf{194}
(2013),  119--145.


%%\bibitem[CG]{clemens-griffiths}
%%C.H.\ Clemens and P.A.\ Griffiths, {\em The intermediate Jacobian of the cubic threefold}, Ann.\ Math.\ \textbf{95} (1972), 281--356.
%%
%
%\bibitem[Cle83]{clemens}
%H.\ Clemens,
%{\em Homological equivalence modulo algebraic equivalenceis not finitely generated}, Publications mathématiques de l’I.H.É.S. \textbf{58} (1983),  19--38.
%%

%\bibitem[CD19]{cisinski-deglise}
%D.-C.\ Cisinski and F.\ D\'eglise, {\em Triangulated Categories of Mixed Motives}, Springer, Monographs in Mathematics, 2019.

%\bibitem[Col79]{collino}
%A.\ Collino, {\em A cheap proof of the irrationality of most cubic threefolds}. (Italian summary)
%Boll.\ Un.\ Mat.\ Ital.\ B (5) \textbf{16} (1979), no. 2, 451--465. 

\bibitem[CT95]{CT}
J.-L.\ Colliot-Th\'el\`ene, {\em Birational invariants, purity and the Gersten conjecture}, K-theory and algebraic geometry: connections with quadratic forms and division algebras (Santa Barbara, CA, 1992), 1--64, Proc.\ Sympos.\ Pure Math.\, \textbf{58}, AMA, Providence,RI, 1995.
%
%%\bibitem[CT19]{CT-manuscripta}
%%J.-L.\ Colliot-Th\'el\`ene, {\em Cohomologie non ramifiée dans le produit avec une courbe elliptique}, Manuscripta Math.\ \textbf{160} (2019), 561--565.
%
%
%\bibitem[CSS83]{CSS}
%J.-L.\ Colliot-Thélène, J.-J.\ Sansuc, and C.\ Soulé, {\em Torsion dans le groupe de Chow de codimension deux}, Duke Math.\ J.\ \textbf{50} (1983),  763--801.
%
%%%\bibitem[CT2]{CT2}
%%%J.-L.\ Colliot-Th\'el\`ene, {\em Quelques cas d'annulation du troisieme groupe de cohomologie non ramifi\'e},  Contemp.\ Math.\ \textbf{571}, Amer.\ Math.\ Soc.\, Providence, RI, 2012, 45--50.   
%%
%
%%\bibitem[CT83]{CT-inventiones}
%%J.-L.\ Colliot-Th\'el\`ene, {\em Hilbert's Theorem 90 for $K_2$, with Application to the Chow Groups of Rational Surfaces},  Inv.\ Math.\ \textbf{71} (1983),  1--20.
%
%
%\bibitem[CTO89]{CTO}
%J.-L.\ Colliot-Th\'el\`ene and M.\ Ojanguren, {\em Vari\'et\'es unirationnelles non rationnelles : au-del\`a de l'exemple d'Artin et Mumford}, Invent.\ Math.\ \textbf{97} (1989), 141--158.
%
%\bibitem[CTSS83]{CTSS}
%J.-L.\ Colliot-Thélène, J.-J.\ Sansuc, and C.\ Soulé, {\em Torsion dans le groupe de Chow de codimension deux}, Duke Math.\ J.\ \textbf{50} (1983),  763--801.

%
%\bibitem[CTHK97]{CTHK}
%J.-L.\ Colliot-Th\'el\`ene, R.\ Hoobler, and B.\ Kahn, {\em  The Bloch-Ogus-Gabber theorem}, Algebraic K-theory (Toronto, ON, 1996), 31--94,  Fields Inst.\  Commun., \textbf{16}, Amer. \ Math.\  Soc., Providence, RI, 1997.
% 
% 
%%
%%\bibitem[CTP1]{CT-Pirutka}
%%J.-L.\ Colliot-Th\'el\`ene and A.\ Pirutka, {\em Hypersurfaces quartiques de dimension 3 \ : non rationalit\'e stable},  Annales Sc.\ \'Ec.\ Norm.\ Sup.\ \textbf{49} (2016), 371--397.
%%
%%\bibitem[CTP2]{CTP2}
%%J.-L.\ Colliot-Th\'el\`ene and A.\ Pirutka, {\em Rev\^etements cycliques qui ne sont pas stablement rationnels}, Izvestiya RAN, Ser.\ Math.\ \textbf{80} (2016), 35--47. 
%%(English translation: arXiv:1506.00420v2.)
%

%\bibitem[CTS07]{CTS}
%J.-L.\ Colliot-Th\'el\`ene and J.-J.\ Sansuc, {\em The rationality problem for fields of invariants under linear algebraic groups (with special regards to the Brauer group)}, Algebraic groups and homogeneous
%spaces, 113--186, Tata Inst.\ Fund.\ Res.\ Stud.\ Math., 19, 2007


%\bibitem[CTV12]{CTV}
%J.-L.\ Colliot-Th\'el\`ene and C.\ Voisin, {\em Cohomologie non ramifi\'ee et conjecture de Hodge enti\`ere}, Duke Math.\ J.\ \textbf{161} (2012), 735--801.
%
%%\bibitem[CMM]{CMM}
%%A.\ Conte, M.\ Marchisio and J.P.\ Murre, {\em On the unirationality of the quintic hypersurface containing a 3-dimensional linear space},  Atti Accad.\ Sci.\ Torino Cl.\ Sci.\ Fis.\ Mat.\ Natur.\ 142 (2008), 89--96. 
%%
%
%%\bibitem[CGM05]{CGM-duke}
%%V.\ Chernousov, S.\ Gille, and  A.\ Merkurjev, {\em Motivic decomposition of isotropic projective homogeneous varieties}, Duke Math.\ J.\ \textbf{126} (2005), 137--159.
%
%
%%\bibitem[DHS94]{DHS}
%%O.\ Debarre, K.\ Hulek and J.\ Spandaw, {\em Very ample linear systems on abelian varieties}, Math.\ Ann.\ \textbf{300} (1994), 181--202.
%
\bibitem[deJ96]{deJong}
A.J.\  de  Jong, {\em Smoothness,  semi-stability  and  alterations},  Publ.\  Math.\  de  l’IH\'ES \textbf{83} (1996), 51--93.
%%
\bibitem[Del74]{deligne-weil}
P.\ Deligne, {\em La conjecture de Weil, I}, Publ.\ Math.\ I.H.\'E.S., \textbf{43} (1974) 273--307.

\bibitem[Del80]{deligne-weil2}
P.\ Deligne, {\em La conjecture de Weil, II}, Publ.\ Math.\ I.H.\'E.S., \textbf{52} (1980) 137--252. 

%%
%%\bibitem[Dia18]{Dia}
%%A.\ Diaz, {\em Rost nilpotence and higher unramified cohomology}, arXiv:1807.08163, to appear in Advances Math.
%
%%\bibitem[Dia20]{diaz}
%%A.\ Diaz, {\em On the unramified cohomology of certain quotient varieties}, Math.\ Z.\ \textbf{296} (2020), 261--273.
%
%%\bibitem[vDB20]{vDB}
%%R.\ van Dobben de Bruyn, {\em Answer to: Are Chow groups invariant under universal homeomorphisms?}, mathoverflow question 363503, \url{https://mathoverflow.net/questions/363503}
%

%\bibitem[Dol16]{dolgachev}
%I.\ Dolgachev, {\em Corrado Segre and nodal cubic threefolds}, (English summary) From classical to modern algebraic geometry, 429–450,
%Trends Hist. Sci., Birkhäuser/Springer, Cham, 2016. 


%%\bibitem[deF1]{deF1}
%%T.\ de Fernex, {\em Birationally rigid hypersurfaces},  Invent.\ Math.\ \textbf{192} (2013), 533--566. 
%%
%%\bibitem[deF2]{deF2}
%%T.\ de Fernex, {\em Erratum to: Birationally rigid hypersurfaces},  Invent.\ Math. 
%%\textbf{203} (2016), 675--680.


%\bibitem[End11]{endo}
%S.\ Endo, {\em The rationality problem for norm one tori}, Nagoya Math.\ J.\ \textbf{202} (2011), 83--106.


%
%%\bibitem[EH16]{EH}
%%D.\ Eisenbud and J.\ Harris, {\em 3264 and All That
%%A Second Course in Algebraic Geometry}, Cambridge University Press, 2016.
%
%
%\bibitem[Eke90]{Eke}
%T.\ Ekhedal, {\em On the adic formalism}, Grothendieck Festschrift, Vol.\ II, Progr.\ Math.\ \textbf{87}, Birkh\"auser, 1990, 197--218.
%
%%\bibitem[EKM08]{EKM} 
%%R.\ Elman, N.\ Karpenko and A.\ Merkurjev, {\em Algebraic and geometric theory of quadratic forms}, AMS, Colloquium Publications \textbf{56}, 2008.
%% 
%%\bibitem[EL]{EL} 
%% R.\ Elman and T.Y.\ Lam, {\em Pfister Forms and K-Theory of Fields}, J.\ Algebra \textbf{23} (1972), 181--213.
%
%\bibitem[ELV98]{esnault-etal}
%H.\ Esnault, M.\ Levine, and E.\ Viehweg, {\em Chow groups of projective varieties of very small degree}, Duke Math.\ J.\ \textbf{87} (1997), 29--58.
%
%
%%\bibitem[Fu11]{Fu}
%%L.\ Fu, {\em \'Etale cohomology theory}, World Scientific Publishing, Singapore, 2011.
%
%
%% 
%%\bibitem[FT]{FT}
%%L.\ Fu and Z.\ Tian, {\em 2-cycles sur les hypersurfaces cubiques de dimension 5}, arXiv:1801.03995.
%
%%\bibitem[Fuj02]{fujiwara}
%%K.\ Fujiwara, {\em A Proof of the Absolute Purity Conjecture (after Gabber)},
%%Advanced Studies in Pure Mathematics \textbf{36} (2002), 153--183. 
%
% 
%\bibitem[Ful98]{fulton}
%W.\ Fulton, {\em Intersection theory}, Springer--Verlag, 1998.
%
%%\bibitem[Fu]{fulton2}
%%W.\ Fulton, {\em Rational equivalence on singular varieties}, Publ.\ Math.\ IHES \textbf{45} (1975), 147--167.
%
%
%\bibitem[Gab94]{gabber}
%O.\ Gabber,  {\em Gersten’s conjecture for some complexes of vanishing cycles},  Manuscripta Math. \ \textbf{85}
%(1994), 323--343.


%\bibitem[Gei10]{geisser}
%T.\ Geisser, {\em Duality via cycle complexes}, Annals of Math.\ \textbf{172}  (2010), 1095--1126.

%
%
%%\bibitem[GS06]{GS}
%%P.\ Gille and T.\ Szamuely, {\em Central simple algebras and Galois cohomology}, Cambridge Studies in Advanced Mathematics \textbf{101}, 2006.
%
%%\bibitem[Gil10]{gille-inventiones}
%%S.\ Gille, {\em The Rost nilpotence theorem for geometrically rational surfaces}, Invent.\ Math.\ \textbf{181} (2010), 1--19.
%%
%%\bibitem[Gil14]{gille-crelle}
%%S.\ Gille, {\em On Chow motives of surfaces}, J.\ reine angew.\ Math.\ \textbf{686} (2014), 149--166.
%%
%%\bibitem[Gil18]{gille-Bull-LMS}
%%S.\ Gille, {\em On the Rost nilpotence theorem for threefolds}, Bull.\ London Math.\ Soc.\ \textbf{50}   (2018),   63--72.
%
%
%%\bibitem[GV18]{gille-vishik}
%%S.\ Gille and A.\ Vishik, {\em  Rost nilpotence and free theories}, Doc.\ Math.\ \textbf{23} (2018), 1635--1657. 
%
%
%%\bibitem[GS88]{GS}
%%M.\ Gros and S.\ Suwa, {\em La conjecture de Gersten pour les faisceaux de Hodge-Witt logarithmiques}, Duke Math.\ J.\ \textbf{57} (1988), 615--628
%%
%%\bibitem[Gri69]{griffiths}
%%P.\ Griffiths, {\em On the periods of certain rational integrals I, II}, Ann.\ Math.\ \textbf{90} (1969), 460--541.
%
%%\bibitem[Gro68]{grothendieck}
%%A.\ Grothendieck, {\em Le groupe de Brauer I, II, III}, in: Dix exposés sur la cohomologie des schémas, Advanced Studies in Pure Mathematics vol.\ 3, Masson et North-Holland, 1968.

%\bibitem[Har77]{hartshorne}
%R.\ Hartshorne, {\em Algebraic Geometry}, Springer, 1977.


%%
%%\bibitem[Hat02]{hatcher}
%%A.\ Hatcher, {\em Algebraic Topology}, Cambridge University Press 2002.
%
%%
%%\bibitem[HKT]{HKT}
%%B.\ Hassett, A.\ Kresch and Yu.\ Tschinkel, {\em Stable rationality and conic bundles}, Math.\ Ann.\ \textbf{365} (2016), 1201--1217.
%%
%%\bibitem[HPT1]{HPT}
%%B.\ Hassett, A.\ Pirutka and Yu.\ Tschinkel, {\em Stable rationality of quadric surface bundles over surfaces}, arXiv:1603.09262, to appear in Acta Math.\
%%
%%\bibitem[HPT2]{HPT2}
%%B.\ Hassett, A.\ Pirutka and Yu.\ Tschinkel, {\em A very general quartic double fourfold is not stably rational}, arXiv:1605.03220, to appear in Algebraic Geometry.
%%
%%%\bibitem[HPT3]{HPT3}
%%%B.\ Hassett, A.\ Pirutka and Yu.\ Tschinkel, {\em Intersections of three quadrics in $\CP^7$}, arXiv:1706.01371.
%%
%%%\bibitem[HT]{HT}
%%%B.\ Hassett and Y.\ Tschinkel, {\em On stable rationality of Fano threefolds and del Pezzo fibrations}, 
%%%arXiv:1601.07074, to appear in Crelle.
%
%
%%\bibitem[Hir68]{hironaka}
%%H.\ Hironaka, {\em Smoothing of Algebraic Cycles of Small Dimensions}, American Journal of Mathematics
%%\textbf{90}  (1968),  1--54.
%
%%
%%%\bibitem[Ho]{hoffmann}
%%%D.W.\ Hoffmann, {\em Isotropy of quadratic forms over the function field of a quadric}, Math.\ Z.\ \textbf{220} (1995), 461--476.
%
%%\bibitem[Huy06]{huybrechts}
%%D.\ Huybrechts, {\em Fourier-Mukai transforms in algebraic geometry},  Oxford  Mathematical  Monographs,  Oxford,  2006.
%
%%
%%%\bibitem[I]{I}
%%%V.A.\ Iskovskikh, {\em On the rationality problem for conic bundles}, Duke Math.\ Journal \textbf{54} (1987),  271--294.
%%
%%\bibitem[IM]{IM}
%%V.A.\ Iskovskikh and Y.I.\ Manin, {\em Three-dimensional quartics and counterexamples to the L\"uroth problem}, Mat.\ Sb.\ \textbf{86} (1971), 140--166. Eng.\ trans., Math.\ Sb.\ \textbf{15} (1972), 141--166.
%
\bibitem[IT14]{IT}
L.\  Illusie  and  M.\  Temkin, {\em Exposé  X.  Gabber’s  modification  theorem  (log  smooth  case)}, Astérisque   \textbf{363--364}  (2014),  167--212,  Travaux  de  Gabber  sur  l'uniformisation  locale  et la cohomologie  \'etale des sch\'emas quasi-excellents.
%
%\bibitem[ILO14]{ILO}
%L.\ Illusie, Y.\ Laszlo,  and F:\ Orgogozo (eds.), {\em Travaux de Gabber sur l’uniformisation locale et lacohomologie étale des schémas quasi-excellents}, Société Mathématique de France, Paris, 2014 (French).  %Séminaire à l’École Polytechnique 2006–2008. [Seminar of the Polytechnic School 2006–2008];
% With the collaboration of Frédéric Déglise, Alban Moreau, Vincent Pilloni, Michel Raynaud, Joël Riou, BenoîtStroh, Michael Temkin and Weizhe Zheng; Astérisque \textbf{363-364 }(2014).

\bibitem[Jan88]{jannsen}
U.\ Jannsen, {\em Continuous \'Etale cohomology}, Math.\ Ann.\ \textbf{280} (1988), 207--245.

%\bibitem[Jan90]{jannsen-LMN}
%U.\ Jannsen, {\em Mixed Motives and Algebraic K-Theory}, LMN 1400, Springer, Berlin, 1990.

%\bibitem[Jan94]{jannsen-2}
%U.\ Jannsen, {\em Motivic sheaves and filtrations on Chow groups}, Proceedings of Symposia in Pure Mathematics \textbf{55} (1994), 245--302.
%
%%\bibitem[Kar98]{karpenko}
%%N.A.\ Karpenko, {\em A direct construction of the Rost motive},
%%K-Theory Preprint Archives \textbf{282} (1998), 4p. 
%%
%%\bibitem[Kar00]{karpenko-00}
%%N.A.\ Karpenko,  {\em Cohomology of relative cellular spaces and of isotropic flag varieties}, Algebra Anal.\ \textbf{12} (2000), 3--69.
%


\bibitem[KMP16]{kim-madapusipera}
W.\ Kim and K.\ Madapusi Pera, {\em  2-adic integral canonical models},  Forum of Mathematics, Sigma. 2016;4:e28.


%\bibitem[KS02]{krishna-srinivas}
%Amalendu Krishna and V. Srinivas, {\em 
%Zero-Cycles and K-Theory on Normal Surfaces},  Annals of Mathematics \textbf{156} (2002),  155--195


%%
%\bibitem[KM13]{KM}
%N.A.\ Karpenko and A.S.\ Merkurjev, {\em On standard norm varieties},  Annales Sc.\ \'Ec.\ Norm.\ Sup.\ \textbf{46} (2013), 177--216.

\bibitem[KS99]{katz-sarnak}
N.M.\ Katz and P.\ Sarnak, {\em Random matrices, Frobenius eigenvalues, and monodromy}  
Amer. Math. Soc. Colloq. Publ., 45
American Mathematical Society, Providence, RI, 1999, xii+419 pp.


%%
%%%\bibitem[Ke]{kerz}
%%%M.\ Kerz, {\em The Gersten conjecture for Milnor K-theory}, Inv.\ Math.\ \textbf{175} (2009), 1--33.
%
%%\bibitem[KRS98]{KRS}
%%B.\ Kahn, M.\ Rost and R.\ Sujatha, {\em Unramified cohomology of quadrics I}, Amer.\ J.\ Math.\ \textbf{120} (1998), 841--891. 
%
%%\bibitem[Kim05]{kimura}
%%S.-I.\ Kimura, {\em Chow groups are finite dimensional, in some sense},  Math.\ Ann.\ \textbf{331} (2005), 173--201. % (2005)
% 
%%
%%\bibitem[Ko1]{kollar}
%%J.\ Koll\'ar, {\em Nonrational hypersurfaces}, J.\ Amer.\ Math.\ Soc.\ \textbf{8} (1995), 241--249. 
%%
%%\bibitem[Ko2]{kollar2}
%%J.\ Koll\'ar, {\em Rational curves on algebraic varieties}, Ergebnisse der Mathematik und ihrer Grenzgebiete. 3. Folge. A Series of Modern Surveys in Mathematics, 32. Springer--Verlag, Berlin,  1996.
%%
%%\bibitem[KT]{KT}
%%M.\ Kontsevich and Yu.\ Tschinkel, {\em Specialization of birational types}, arXiv:1708.05699.
%
%
%%\bibitem[K\"oc91]{koeck}
%%B.\ K\"ock,  {\em  Chow motif and higher Chow theory ofG/P}, Manuscr.\ Math.\ \textbf{70} (1991), 363--372.
%
%%
%%%\bibitem[KO]{KO}
%%%I.\ Krylov and T.\ Okada, {\em Stable rationality of del Pezzo fibrations of low degree over projective spaces}, arXiv:1701.08372.
%%%
%%
%%%\bibitem[Lam1]{lam}
%%%T.Y.\ Lam, {\em The algebraic theory of quadratic forms}, W.A.\ Benjamin, Inc., Massachusetts, 1973.
%%
%%\bibitem[Lam]{lam2}
%%T.Y.\ Lam, {\em Introduction to quadratic forms over fields}, Graduate Studies in Mathematics \textbf{67}, AMS, Providence, Rhode Island, 2005. 
%%
%%%\bibitem[Lang]{lang}
%%%S.\ Lang, {\em On Quasi Algebraic Closure}, Ann.\ Math.\ \textbf{55} (1952), 373--390.

%\bibitem[Lev85]{levine-AJM}
%M.\ Levine, {\em Torsion zero-cycles on singular varieties}, Amer.\ J.\ Math.\ \textbf{107} (1985), 737-757.


%%
%\bibitem[Lev98]{levine-2}
%M.\ Levine, {\em Mixed Motives}, Math.\  Surveys and Monographs \textbf{57} AMS,  Providence, 1998. 
%
%
%\bibitem[Lev05]{levine}
%M.\ Levine, {\em Mixed Motives},
%in: Handbook of K-theory, vol.\ \textbf{1},
%E.M.\ Friedlander, D.R.\ Grayson, eds.,
%429--522. Springer-Verlag, 2005.
%
%
%\bibitem[Lew01]{lewis}
%J.D.\ Lewis, {\em A Filtration on the Chow Groups of a Complex Projective Variety}, Compositio Math.\ \textbf{128} (2001), 299--322.
%
%%\bibitem[Ma17]{Ma}
%%S.\ Ma, {\em Torsion 1-cycles and the coniveau spectral sequence}, Documenta Math.\ \textbf{22} (2017), 1501--1517.
%%
%%\bibitem[Ma20]{Ma2}
%%S.\ Ma, {\em Unramified cohomology, integral coniveau filtration and Griffiths group}, arXiv:2009.14447.


\bibitem[MP15]{madapusipera}
K.\ Madapusi Pera,  {\em The Tate conjecture for K3 surfaces in odd characteristic},  Invent.\
Math.\ \textbf{201} (2015),  625--668.


%\bibitem[Mat68]{mattuck}
%A.\ Mattuck, {\em The field of multisymmetric functions}, Proc.\ AMS \textbf{19} (1968), 764--765.

\bibitem[Mau14]{maulik}
D.\ Maulik, {\em  Supersingular K3 surfaces for large primes}, Duke Math.\  J.\ \textbf{163} (2014),
 2357--2425.

%
%%\bibitem[Man68]{manin}
%%J.I.\ Manin,  {\em  Correspondences,  motifs and monoidal transformations},  Mat.\ Sb. \ (N.S.) \textbf{77} (1968),  475--507.
 
\bibitem[MS83]{MS}
A.\ Merkurjev and A.\ Suslin, {\em K-cohomology of Severi--Brauer varieties and the norm residue homomorphism}, Izv.\ Akad.\ Nauk SSSR Ser.\ Mat.\ \textbf{46} (1982), 1011--1046, 1135--1136. Eng.\ trans., Math.\ USSR Izv.\ \textbf{21} (1983), 307--340.
%
%%
%%\bibitem[Mer]{merkurjev}
%%A.S.\ Merkurjev, {\em Unramified elements in cycle modules}, J.\ London Math.\ Soc.\ \textbf{78} (2008), 51--64.
%%

\bibitem[Moo19]{moonen}
B.\ Moonen, A remark on the Tate conjecture, Journal of Algebraic Geometry \textbf{28} (2019), 599--603. 

\bibitem[Mor19]{morrow}
M.\ Morrow, {\em A Variational Tate Conjecture in crystalline cohomology},
J.\  Eur.\  Math.\  Soc.\ \textbf{21} (2019), 3467--3511.

%%
%
%\bibitem[Moch16]{mochizuki}
%S.\ Mochizuki, {\em A survey on Gersten's conjecture}, arXiv:1608.08114.
%
%\bibitem[Mum69]{mumford}
%D.\ Mumford, {\em Rational equivalence of $0$-cycles on surfaces},  J.\ Math.\  Kyoto Univ. \ \textbf{9} (1969), 195--204.
%
%
%%
\bibitem[Mil80]{milne}
J.S.\ Milne, {\em \'Etale cohomology}, Princeton University Press, Princeton, NJ, 1980.
%



\bibitem[Mil86]{milne-AJM}
J.S.\ Milne,  {\em Values of zeta functions of varieties over finite fields}. Amer.\ J.\  Math.\ \textbf{108} (1986),  297--360.

\bibitem[Mil13]{milne-notes}
J.S.\ Milne, {\em Lectures on \'etale cohomology}, 2013, \url{https://www.jmilne.org/math/CourseNotes/LEC.pdf}

\bibitem[Mil70]{milnor}
J.\ Milnor, {\em Algebraic K-theory and quadratic forms},  Invent.\ Math.\ \textbf{9} (1970),  318--344. 

%%
%%\bibitem[Mur]{Mur}
%%J.P.\ Murre, {\em Reduction of the proof of the non-rationality of a non-singular cubic
%%threefold to a result of Mumford}, Comp.\ Math.\
%%\textbf{27} (1973), 63--8.
%
%\bibitem[Mur93]{murre}
%J.P.\ Murre, {\em On a conjectural filtration on the Chow groups of an algebraic variety}, Indag.\ Math.\ N.S.\ \textbf{4} (1993), 177-188.
%
%%
%%\bibitem[NS]{NS}
%%J.\ Nicaise and E.\ Shinder, {\em The motivic nearby fiber and
%%degeneration of stable rationality}, arXiv:1708.027901.
%%
%
%%\bibitem[No93]{nori}
%%M.\ Nori, {\em  Algebraic cycles and Hodge-theoretic connectivity}, Invent.\ Math.\ \textbf{111} (1993), 349--373.
%
%%
%%%\bibitem[OVV]{OVV}
%%%D.\ Orlov, A.\ Vishik and V.\ Voevodsky,
%%%{\em An exact sequence for $K^M_\ast /2$ with applications to quadratic forms}, Ann.\ Math.\ \textbf{165} (2007), 1--13.
%%
%%\bibitem[Oka]{oka}
%%T.\ Okada, {\em Stable rationality of cyclic covers of projective spaces}, arXiv:1604.08417.
%
%\bibitem[Par94]{paranjape}
%K.\ Paranjape, {\em Cohomological and Cycle-theoretic Connectivity},  Annals of Math. \ \textbf{140}  (1994), 641--660. 
%
%

%\bibitem[PS21]{PS}
%N.\ Pavic and S.\ Schreieder, {\em The diagonal of quartic fivefolds}, Preprint 2021, to appear in Algebraic Geometry.

%%
%%\bibitem[Pu1]{Pu1}
%%A.V.\ Pukhlikov, {\em Birational isomorphisms of four-dimensional quintics},  Invent.\ Math.\ \textbf{87} (1987), no. 2, 303--329.
%%
%%\bibitem[Pu2]{Pu2}
%%A.V.\ Pukhlikov, {\em Birational automorphisms of Fano hypersurfaces},  Invent.\ Math.\ \textbf{134} (1998), no. 2, 401--426.
%% 
%%%\bibitem[Pe1]{peyre}
%%%E.\ Peyre, {\em Unramified cohomology and rationality problems}, Math.\ Ann.\ \textbf{296} (1993), 247--268.
%%%
%%%\bibitem[Pe2]{peyre2}
%%%E.\ Peyre, {\em Progr\`es en irrationalit\'e}, [d'apr\`es C.\ Voisin, J.-L.\ Colliot-Th\' el\`ene, B.\ Hassett, A.\ Kresch, A.\ Pirutka, B.\ Totaro, Y.\ Tschinkel et al.], 
%%% S\'eminaire Bourbaki 69-\`eme ann\'ee, Exp.\ \textbf{1123}, Ast\'erisque (2016), 1--26.
%%
%%%\bibitem[Pi]{Pirutka}
%%%A.\ Pirutka, {\em Varieties that are not stably rational, zero-cycles and unramified cohomology}, to appear in the proceedings of the AMS Algebraic Geometry Summer Institute, SLC 2015. 
%
%%\bibitem[Pir12]{pirutka}
%%A.\ Pirutka, {\em Invariants birationnels dans la suite spectrale de Bloch-Ogus}, Journal of K-theory \textbf{10} (2012), 565--582.
%
%
%\bibitem[Qui73]{quillen}
%D.\ Quillen,  {\em Higher algebraic K-theory, I},  Lecture Notes in Mathematics \textbf{341}, Springer, Berlin, 1973,
%77--139.
%

%\bibitem[RZ00]{ribes}
%L.\ Ribes and P.\ Zalesskii, {\em Profinite groups}, Springer, Berlin, 2000. 
%\url{https://doi.org/10.1007/978-3-662-04097-3} 

%\bibitem[Rob70]{Roberts}
%J.\ Roberts, {\em Chow’s Moving Lemma}, Algebraic Geometry, Proceedings of the 5th Nordic Summer-School in Mathematics, 89--96, Oslo (1970), Wolters-Noordhoff, Groningen
%
%% 
%%
%\bibitem[Rost96]{rost}
%M.\ Rost, {\em Chow groups with coefficients}, Doc.\ Math.\ \textbf{1} (1996), 319--393.
%
%%\bibitem[RS10]{RS}
%%A.\ Rosenschon and V.\ Srinivas, {\em The Griffiths group of the generic abelian 3-fold}, Cycles, motives and Shimura varieties, 449--467. Tata Inst.\ Fund.\ Res., Mumbai, 2010.
%
%%\bibitem[RS18]{RS}
%%A.\ Rosenschon and A.\ Sawant, {\em Rost nilpotence and \'etale motivic cohomology}, Adv.\ Math.\ \textbf{330} (2018), 420--432.
%%
%%\bibitem[Ros98]{rost}
%%M.\ Rost, {\em The motive of a Pfister form},    Preprint   1998,    available   at \url{www.math.uni-bielefeld.de/~rost/data/motive.pdf}.

%\bibitem[Ros02]{rost02}
%M.\ Rost, {\em Norm varieties and algebraic cobordism}, Proceedings of the International Congress of Mathematicians, Vol.\ II (Beijing, 2002), 77--85, Higher Ed.\ Press, Beijing, 2002.
%
%
%\bibitem[Sal84]{saltman}
%D.J.\ Saltman, {\em Retract rational fields and cyclic Galois extensions}, Israel J.\ Math.\ \textbf{47} (1984), 165--215.

%
%\bibitem[Sai96]{saito}
%S.\ Saito, {\em Motives and filtrations on Chow groups}, Invent.\ Math.\ \textbf{125}, 149--196.
%
%%
%%\bibitem[Se]{serre}
%%J.-P.\ Serre, {\em Galois cohomology}, Springer--Verlag, Berlin, 1997. 
%%
%%%\bibitem[SZ]{SZ}
%%%A.\ Schmidt and K.\ Zainoulline, {\em Generic injectivity for {\'e}tale cohomology and pretheories}, J.\ Algebra \textbf{263} (2003), 215--227.
%%
%%\bibitem[Sch1]{Sch1}
%%S.\ Schreieder, {\em On the rationality problem for quadric bundles}, arXiv:1706.01356v4.
%%
%%\bibitem[Sch2]{Sch2}
%%S.\ Schreieder, {\em Quadric surface bundles over surfaces and stable rationality}, arXiv:1706.01358, to appear in Algebra \& Number Theory.
%
%%\bibitem[Sch19]{Sch-JAMS}
%%S.\ Schreieder, {\em Stably irrational hypersurfaces of small slopes},
%%J.\ Amer.\ Math.\ Soc.\ \textbf{32} (2019), 1171--1199.
%%
%%\bibitem[Sch20a]{Sch-torsion}
%%S.\ Schreieder, {\em Torsion orders of Fano hypersurfaces}, to appear in Algebra \& Number Theory.
%
\bibitem[Sch21]{Sch-survey}
S.\ Schreieder, {\em Unramified cohomology, algebraic cycles and rationality}, in: G. Farkas et al. (eds), Rationality of Varieties,
Progress in Mathematics 342, Birkh\"auser (2021), 1--44.


%
%\url{https://www.iag.uni-hannover.de/fileadmin/iag/homepages/schreieder/publications/Griffiths.pdf}

\bibitem[Sch23]{Sch-refined}
S.\ Schreieder, {\em Refined unramified cohomology of schemes}, 
Compositio Mathematica 159 (2023), 1466-1530. 


\bibitem[Sch24]{Sch-moving}
S.\ Schreieder, {\em A moving lemma for cohomology with support}, 
arXiv:2207.08297,  Preprint 2024, to appear in EPIGA (special volume in honour of Claire Voisin).


%
%\bibitem[Sch21c]{Sch-griffiths}
%S.\ Schreieder, {\em Infinite torsion in Griffiths groups}, Preprint 2021, arXiv:2011.15047v3.
%
%
%\bibitem[Schoe92]{schoen-torsion}
%C.\ Schoen, {\em Some examples of torsion in the Griffiths group},
%Math.\ Ann.\ \textbf{293} (1992), 651--679.
%
%
%%\bibitem[Scho98]{Schoen}
%%C.\ Schoen, {\em An integral analog of the Tate conjecture for one
%%dimensional cycles on varieties over finite fields}, Math.\ Ann.\ \textbf{311} (1998), 493--500. 
%
%%\bibitem[Schoe00]{schoen-product}
%%C.\ Schoen, {\em  On certain exterior product maps of Chow groups}, Math.\ Res.\ Lett.\ \textbf{7} (2000), 177--194.
%%
%%\bibitem[Schoe02]{schoen-modn}
%%C.\ Schoen, {\em Complex varieties for which the Chow group mod n is not finite}, J.\ Alg.\ Geom.\ \textbf{11} (2002), 41--100.
%
%
%\bibitem[Ser02]{serre}
%J.-P.\ Serre, {\em Galois cohomology}, Springer Monographs in Mathematics, Springer- Verlag Berlin, 2002.
%

%\bibitem[SJ06]{SJ}
%A.\ Suslin and S.\ Joukhovitski, {\em Norm varieties}, J.\ Pure Appl.\ Algebra \textbf{206} (2006), 245--276.
%%
%%%\bibitem[Sp]{springer}
%%%T.\ A.\ Springer, {\em Sur les formes quadratiques d'indice z\'ero}, C.\ R.\ Cad.\ Sci.\ Paris \textbf{234} 1952, 1517--1519.
%
%%\bibitem[Sh19]{shen}
%%M.\ Shen, {\em Vanishing cycles under base change and the integral Hodge conjecture}, Preprint 2019, arXiv:1901.07091.
%
%\bibitem[SV05]{SV05}
%C.\  Soulé and C.\  Voisin,  {\em Torsion cohomology classes and algebraic cycles on complex projective manifolds}.  Adv. \ Math.\ \textbf{198} (2005),  107--127.
%
%
%%
%\bibitem[SV05]{SV}
%C.\ Soul\'e and C.\ Voisin, {\em Torsion cohomology classes and algebraic cycles on complex manifolds}, Adv.\ Math.\ \textbf{198} (2005),  107--127.
%
%%\bibitem[Su95]{Su}
%%N.\ Suwa, {\em A Note on Gersten's Conjecture for Logarithmic
%%Hodge-Witt Sheaves}, K-Theory \textbf{9} (1995), 245--271.

\bibitem[Ta66]{tate-conj}
J.\ Tate, {\em  Endomorphisms of abelian varieties over finite fields}, Invent.\ Math.\ \textbf{2} (1966),
134--144.

%
%%\bibitem[Ta76]{tate}
%%J.\ Tate, {\em Relations between K2 and Galois Cohomology}, Invent.\ math.\ \textbf{36} (1976), 257--274.  
%%
%%\bibitem[Tem17]{temkin}
%%M.\ Temkin, {\em Tame distillation and desingularization by $p$-alterations}, Ann.\ of Math.\ \textbf{186} (2017), 97--126.
%
%%
%%%\bibitem[T1]{totaro2}
%%%B.\ Totaro, {\em Birational geometry of quadrics}, Bull.\ Soc.\ Math.\ France \textbf{137} (2009), 253--276.
%%
%%\bibitem[To]{totaro}
%%B.\ Totaro, {\em Hypersurfaces that are not stably rational}, J.\ Amer.\ Math.\ Soc.\ \textbf{29} (2016), 883--891. 
%
%\bibitem[Tot97]{totaro-JAMS}
%B.\ Totaro, {\em Torsion algebraic cycles and complex cobordism}, Journal of the AMS \textbf{10} (1997), 467--493. 
%
%%\bibitem[To13]{totaro-IHC-2}
%%B.\ Totaro, {\em 
%%On the integral Hodge and Tate conjectures over a number field}, Forum of Mathematics, Sigma \textbf{1} (2013), e4 (13 pages). 
%%
%%\bibitem[To16]{totaro-chow}
%%B.\ Totaro, {\em Complex varieties with infinite Chow groups modulo 2 (11 pages}, Annals of Mathematics \textbf{183} (2016), 363--375.


\bibitem[To17]{totaro-tate}
B.\ Totaro, {\em Recent progress on the Tate conjecture}, Bulletin of the AMS \textbf{54} (2017), 575--590.

%%
%%%\bibitem[Vi]{vial}
%%%C.\ Vial, {\em Algebraic cycles and fibrations}, Documenta Math.\ \textbf{18} (2013), 1521--1553.
%
%
%%\bibitem[Vish98]{vishik}
%%A.\ Vishik, {\em Integral   Motives   of   Quadrics},  MPIM-preprint,  1998  \textbf{13},  1--82;  available  at \url{www.mpim-bonn.mpg.de/node/263}.
%%
%%\bibitem[Vish11]{vishik-annales}
%%A.\ Vishik, {\em Excellent connections in the motives of quadrics},    Annales scientifiques de l'École Normale Supérieure \textbf{44} (2011), 183--195.
%
%
%\bibitem[Voe95]{voevodsky-IMRN}
%V.\ Voevodsky, {\em A Nilpotence Theorem for Cycles Algebraically Equivalent to Zero}, IMRN \textbf{4} 1995,  187-199.
%
%%
%%\bibitem[Voe03]{Voe}
%%V.\ Voevodsky, {\em Motivic cohomology with $\Z/2$-coefficients}, Publ.\ Math.\ IHES \textbf{98} (2003), 59--104.
%
%
\bibitem[Voe11]{Voe}
V.\ Voevodsky, {\em On motivic cohomology with $Z/l$-coefficients}, Ann.\ of Math.\ \textbf{174} (2011), 401--438.
%
%\bibitem[Voi94]{voisin-nilpotence}
%C.\ Voisin, {\em Remarks on zero-cycles of self-products of varieties}, in: Moduli of vector bundles, Proceedings of the Taniguchi Congress (M. Maruyama, ed.), Marcel Dekker, New York Basel Hong Kong, 1994.
%
%%
%%\bibitem[Voi02]{voisin1}
%%C. Voisin, {\em Hodge Theory and Complex Algebraic Geometry, I}, Cambridge University Press, Cambridge, 2002. 
%%
%
%\bibitem[Voi03]{voisin2}
%C.\ Voisin, {\em Hodge Theory and Complex Algebraic Geometry, II}, Cambridge University Press, Cambridge, 2003. 
%
%\bibitem[Voi04]{Voi-filtration}
%C.\ Voisin, {\em Remarks on filtrations on Chow groups and the Bloch conjecture}, Annali di Matematica \textbf{183} (2004), 421--438.
%
%%\bibitem[Voi06]{voisin-IHC}
%%C.\ Voisin, {\em On integral Hodge classes on uniruled or Calabi-Yau threefolds}, in \emph{Moduli Spaces and Arithmetic Geometry}, Advanced Studies in Pure Mathematics \textbf{45} (2006), 43--73.
%%
%%\bibitem[Voi12]{Voi-unramified}
%%C.\ Voisin, {\em Degree $4$  unramified cohomology with finite coefficients and torsion codimension  $3$ cycles}, in Geometry and Arithmetic, (C.\ Faber, G.\ Farkas, R.\ de Jong Eds), Series of Congress Reports, EMS 2012, 347--368.
%
%%\bibitem[Voi20]{Voi-coniveau}
%%C.\ Voisin, {\em On the coniveau of rationally connected threefolds}, Preprint, arXiv:2010.05275.
%
%%
%%\bibitem[Voi2]{voisin-takagi}
%%C.\ Voisin, {\em Some aspects of the Hodge conjecture}, Japan.\ J.\ Math.\ \textbf{2} (2007) 261--296.  
%
%%\bibitem[Voi13]{Voi-Clay}
%%C.\ Voisin, {\em Remarks on curve classes on rationally connected varieties}, Clay Mathematics Proceedings Volume 18, 591-599 (2013).
% 
%
%%
%%\bibitem[Voi3]{voisin-JAG}
%%C.\ Voisin, {\em Abel-Jacobi map, integral Hodge classes and decomposition of the diagonal.} J.\ Algebr.\ Geom.\ \textbf{22} (2013), 141--174.
%%
%%\bibitem[Voi4]{voisin}
%%C.\ Voisin, {\em Unirational threefolds with no universal codimension 2 cycle}, Invent.\ Math.\ \textbf{201} (2015), 207--237.
%%
%%%\bibitem[Voi3]{voisin2}
%%%C.\ Voisin, {(Stable) rationality is not deformation invariant}, arXiv:1511.03591v3.
%
%%\bibitem[Voi14]{Voi-JDG}
%%C.\ Voisin, {\em Bloch's conjecture for Catanese and Barlow surfaces}, J.\ Differential Geom.\ \textbf{97} (2014), 149--175.
%
%\bibitem[Voi19]{Voi19}
%C.\ Voisin, {\em Birational invariants and decomposition of the diagonal},  in {\it Birational geometry of hypersurfaces}, Eds.\ A.\  Hochenegger, M. \ Lehn, P.\ Stellari, Lecture Notes of the Unione Matematica Italiana 26, Springer,  3--71,  (2019).

%\bibitem[Voi22]{Voi22}
%C.\ Voisin, {\em Cycle classes on abelian varieties and the geometry of the Abel-Jacobi map}, arXiv:2212.03046.
%
%
%%\bibitem[Wei94]{weibel}
%%C.A.\ Weibel, {\em An Introduction to Homological Algebra}, Cambridge University Press, Cambridge, 1994.
%
%%\bibitem[Wlo03]{wlo}
%%J.\  Wlodarczyk, {\em Toroidal  varieties  and  the  weak  factorization theorem}, Invent.\ Math.\ \textbf{154} (2003), 223--331.
%
%
%%\bibitem[Zuc77]{zucker}
%%St.\ Zucker, {\em The hodge conjecture for cubic fourfolds},  Compositio Mathematica, \textbf{34} (1977), 199--209.

\end{thebibliography}
\end{document}